\def\opn#1#2{\def#1{\operatorname{#2}}} 
\opn\chara{char} \opn\length{\ell}
\opn\projdim{proj\,dim} \opn\injdim{inj\,dim}
\opn\rank{rank} \opn\depth{depth} \opn\grade{grade}
\opn\height{height} \opn\embdim{emb\,dim} \opn\codim{codim}
\opn\Tr{Tr} \opn\bigrank{big\,rank}
\opn\superheight{superheight}\opn\lcm{lcm}
\opn\trdeg{tr\,deg}%
\opn\reg{reg} \opn\lreg{lreg}
\opn\Ker{Ker} \opn\Coker{Coker} \opn\Im{Im} \opn\Hom{Hom}
\opn\Tor{Tor} \opn\Ext{Ext} \opn\End{End} \opn\Aut{Aut} \opn\id{id}
\opn\nat{nat}
\opn\pff{pf}
\opn\Pf{Pf} \opn\GL{GL} \opn\SL{SL} \opn\mod{mod} \opn\ord{ord}
\let\to=\rightarrow
\def\Implies{\ifmmode\Longrightarrow \else
	\unskip${}\Longrightarrow{}$\ignorespaces\fi}
\def\implies{\ifmmode\Rightarrow \else
	\unskip${}\Rightarrow{}$\ignorespaces\fi}
\def\iff{\ifmmode\Longleftrightarrow \else
	\unskip${}\Longleftrightarrow{}$\ignorespaces\fi}
\newtheorem{Theorem}{Theorem}[section]
\newtheorem{Lemma}[Theorem]{Lemma}
\newtheorem{Proposition}[Theorem]{Proposition}
\newtheorem{Remark}[Theorem]{Remark}
\newtheorem{Definition}[Theorem]{Definition}
\theoremstyle{definition}
\let\epsilon=\varepsilon
\let\kappa=\varkappa
\opn\ini{in} \opn\inm{inm} \opn\Sym{Sym} \opn\diag{diag}
\opn\Ii{(i)} \opn\Iii{(ii)}
\title{A new error analysis for parabolic Dirichlet boundary control problems\thanks{
Wei Gong was supported in part by the Strategic Priority Research Program of Chinese
Academy of Sciences (Grant No. XDB 41000000) and the National Natural Science Foundation of China
(Grant No. 12071468). Xiaoping Xie was supported in part by the National Natural Science Foundation of China  (Grant No. 12171340).}
}
\author[ ]{Dongdong Liang\thanks{The Hong Kong Polytechnic University Shenzhen Research Institute, Shenzhen 518057, China \& School of Mathematics, Sichuan University, Chengdu 610064, China. Email: dongdong.liang@polyu.edu.hk}
\quad Wei Gong\thanks{NCMIS \& LSEC, Institute of Computational Mathematics, Academy of Mathematics and Systems Science, Chinese Academy of Sciences, Beijing 100190, China. Email: wgong@lsec.cc.ac.cn }
\quad Xiaoping Xie\thanks{School of Mathematics, Sichuan University, Chengdu 610064, China. Email: xpxie@scu.edu.cn}}
\date{}
\begin{document}
	\maketitle
	
	{\bf Abstract:}\hspace*{5pt} { In this paper, we consider the finite element approximation to a parabolic Dirichlet boundary control problem and establish new a priori error estimates. In the temporal semi-discretization we apply  the DG(0) method for the state and the variational discretization for the control, and obtain the convergence rates $O(k^{\frac{1}{4}})$ and $O(k^{\frac{3}{4}-\varepsilon})$ $(\varepsilon>0)$  for the control  for problems posed on polytopes with $y_0\in L^2(\Omega),\,y_d\in L^2(I;L^2(\Omega))$  and smooth domains with $y_0\in H^{\frac{1}{2}}(\Omega),\,y_d\in L^2(I;H^1(\Omega))\cap H^{\frac{1}{2}}(I;L^2(\Omega)) $, respectively.  In  the fully discretization of the optimal control problem posed on polytopal domains, we apply the DG(0)-CG(1) method for  the state and the variational discretization approach for the control, and derive  the convergence order $O(k^{\frac{1}{4}} +h^{\frac{1}{2}})$, which improves the known results by removing the mesh size condition $k=O(h^2)$ between the space mesh size $h$ and the time step $k$. 
As a byproduct, we obtain a priori error estimate $O(h+k^{1\over 2})$ for the fully discretization of parabolic equations with inhomogeneous Dirichlet data posed on polytopes, which also improves the known error estimate by removing the above mesh size condition. }
	
	{{\bf Keywords:}\hspace*{10pt} 
	parabolic equation, Dirichlet boundary control, finite element method, semi-discretization, full  discretization, error estimate}
	
	{\bf Subject Classification:} 49J20, 65N15, 65N30.
	
	\section{Introduction}
	\setcounter{equation}{0}
	Let $\Omega\subseteq \mathbb{R}^N (N=2,3)$ be a bounded, convex polytopal or smooth domain with boundary $\Gamma:=\partial\Omega$.  Let $T>0$ be a constant, and denote   $I:=(0,T)$.  In this article, we study the following parabolic Dirichlet boundary optimal control problem:
	\begin{equation}\label{BC_Functioal}
		\min\limits_{u\in U_{ad},\ y\in L^2(I;L^2(\Omega))}\quad J(y,u)=\frac{1}{2}\Vert y-y_d\Vert^2_{L^2(I;L^2(\Omega))}+\frac{\alpha}{2}\Vert u\Vert^2_{L^2(I;L^2(\Gamma))},
	\end{equation}
where $\alpha>0$ denotes the regularization parameter, $y_d\in L^2(I;L^2(\Omega))$ is a given target state, $U_{ad}$ is the admissible set of controls of the following box type:
	\begin{equation}
		U_{ad}=\big\{u\in L^2(I;L^2(\Gamma)):\quad u_a\le u(t,x)\le u_b,\quad  \mbox{a.e.}\  (t,x)\in \Sigma_T\big\},\nonumber
	\end{equation}
	with $u_a,\,u_b\in \mathbb{R}\cup\{\infty\}$ satisfying  $u_a<u_b$. The state variable $y$ and the control variable $u$ in the optimal control problem (\ref{BC_Functioal}) are constrained by the following parabolic equation:
	\begin{equation}\label{BC_stateE}
		\begin{cases}
			\partial_t y-\Delta y=f &\mbox{in}\  \Omega_T:=I\times\Omega, \\ 
			y=u   &\mbox{on}\ \Sigma_T:=I\times\Gamma ,\\
			y(\cdot,0)=y_0 &\mbox{in}\ \Omega,
		\end{cases}
	\end{equation}
	where $f\in L^2(I;L^2(\Omega))$ and $y_0\in L^2(\Omega)$ are given. The solution pair $(\bar{u},\bar{y})$ with $\bar{u}\in U_{ad}$ minimizing the cost functional $J$ is called the optimal pair of the optimal control problem \eqref{BC_Functioal}.
	
Dirichlet boundary control is a class of important control problems and plays a key role in many practical applications, especially in fluid dynamics, e.g., the active boundary control for fluid flows \cite{A.V. Fursikov,MDGunzburger1991,MHinzeandKKunisch}, where the angular velocity along the axes  of a cylinder is taken as the control to adjust the velocity of fluid flowing the surface of it. The target of such kind of  controls is to restrain the falling off of vortex or to postpone the production of turbulence. In general, controls with low regularity are admissible, such as blowing or suction on a part of  the boundary as controls, which could have jumps and satisfy  pointwise constraints.
	
In the past few decades, Dirichlet boundary control problems have been extensively studied; we refer to \cite{Thomas Apel, ECasasandJPRaymond, ECasasandJSokolowski, DeckelnickGuntherHinze2009, DAFrenchandJTKing} for elliptic Dirichlet boundary controls,   to \cite{A.V. Fursikov, MHinzeandKKunisch} for   boundary controls  of Navier-Stokes equations, and to \cite{NAradaandJPRaymond,KKunischandBVexler2007,ILasiecka1980,Lasiecka,J. L. LIONS}  for  parabolic Dirichlet boundary control problems. The study of Dirichlet  boundary control problems differs in the choice of the control space. The $L^2(\Gamma)$-control space (cf. \cite{Thomas Apel, ECasasandJPRaymond, DeckelnickGuntherHinze2009, DAFrenchandJTKing}) is very popular and easy to implement but usually yields  less regular solutions, while the energy space method (cf. \cite{Of}) gives smoother solutions but the implementation is more involved. As far as  the parabolic Dirichlet boundary control problem \eqref{BC_Functioal} is concerned, when it
is formulated with the control space $L^2(I;L^2(\Gamma))$, the state equation has to be understood in the very weak sense (cf. \cite{M. Berggren-2004860}) and the optimal control often behaves less regular  \cite{NAradaandJPRaymond,KKunischandBVexler2007,ILasiecka1980,Lasiecka,J. L. LIONS} , e.g., admits only $H^{\frac{1}{2}}$-regularity in space and $H^{\frac{1}{4}}$-regularity in time in polytopes. To alleviate this difficulty, a Robin penalization approach was proposed in  \cite{CBernardiandHEFekih}.

When comes to discretizations and error estimates for the Dirichlet boundary control problem, the finite element method has been extensively used  and there are many contributions for elliptic control problems, see e.g., \cite{Thomas Apel,ECasasandJPRaymond,ECasasandJSokolowski,DeckelnickGuntherHinze2009,DAFrenchandJTKing,SMayRRannacheran}. However, there are only a few works for parabolic Dirichlet boundary control problems, cf. \cite{WM-Hinze,WGongandBYLi,KKunischandBVexler2007}. In \cite{KKunischandBVexler2007},   a fully discretization of the optimal control problem was considered by using the  DG(r)-CG(s) method  and a  semi-smooth Newton method was applied to solve the optimization problem. In \cite{WM-Hinze}, the   DG(0)-CG(1) method combining with the variational discretization was used to obtain the discrete optimal control problem in two dimensions, and   the convergence orders $O(h^\frac{1}{2})$ in space and $O(k^\frac{1}{4})$ in time were obtained under the condition $k=O(h^2)$, where $k$ and $h$ are the time step and mesh size, respectively. In \cite{WGongandBYLi}, an improved convergence order $O(h^{1-\frac{1}{q}-r})$ was derived for a spatial semi-discretization, where $q>0$ depends on the maximal interior angle of the corners and $r>0$ is an arbitrary small constant. In this paper, we intend to derive new a priori error estimates without the mesh condition $k=O(h^2)$ by separating the error estimates for the temporal and spatial discretizations. Our results of temporal semi-discretizations apply to problems posed on both convex polytopal and smooth domains, and the results of fully discretizations are valid only for polytopal domains.
	
To carry out the error analysis of  finite element discretization  of  the Dirichlet boundary  control problem \eqref{BC_Functioal}, an important and indispensable ingredient is  the error estimation and stability of the finite element approximation of parabolic equations with rough Dirichlet boundary conditions, see e.g., \cite{DAFrenchandJTKing1993,WM-Hinze,ILasiecka1986}. In \cite{ILasiecka1986},  the convergence orders $O(h^{\frac{1}{2}})$ $(1<p<\infty)$ and $O(h^{\frac{1}{2}}\big|\ln {\,h}\big|)$ were obtained for the  spatial semi-discretization under the norm $\Vert\cdot\Vert_{L^p(I;L^2(\Omega))}$ for $p=1$  and $\infty$, respectively, by assuming  the boundary data to be in space  $L^p(I;L^2(\Gamma))$. For the parabolic equation with Dirichlet data in $L^2(I;L^2(\Gamma))$,   the convergence orders $O(k^\frac{1}{4})$ in time and $O(h^\frac{1}{2})$ in space  were obtained in \cite{DAFrenchandJTKing1993,WM-Hinze}  for the fully discretization scheme under the condition $k=O(h^2)$ for the two dimensional case. We remark that the backward Euler or DG(0) scheme used in \cite{DAFrenchandJTKing1993,WM-Hinze} is unconditionally stable so that the condition $k=O(h^2)$, caused by the inverse estimate with the low regularity of solutions, is restrictive  there. In our analysis, we will  remove this restriction. 
	
In  this paper, we consider the temporal semi-discretization and time-space full discretization of the parabolic Dirichlet boundary control problem \eqref{BC_Functioal}, and derive the corresponding error estimates. 
We apply the DG(0) method to the state and the variational discretization to the control for the temporal semi-discretization. If $f\in L^2(I;L^2(\Omega)),\ y_0\in H^{2s-1}(\Omega)$ and $y_d\in L^2(I;H^{2s^\prime-\frac{1}{2}}(\Omega))\cap H^{s^\prime-\frac{1}{4}}(I;L^2(\Omega))$ with $s\in[\frac{1}{2}, \frac{3}{4})$, $s^\prime\in [s-\frac{1}{4}, s]$ in case that $\Omega$ is smooth and $s=\frac{1}{2},\ s^\prime=\frac{1}{4}$ in case that $\Omega$ is a polytope,  we can obtain the following error estimate:

		\begin{equation}
			\Vert \bar{u}-\bar{u}_k\Vert_{L^2(I;L^2(\Gamma))}+\Vert \bar{y}-\bar{y}_k\Vert_{L^2(I;L^2(\Omega))}\le C(k^s+k^{s^\prime}),
		\end{equation}
where $(\bar{u},\bar{y})$ and $(\bar{u}_k,\bar{y}_k)\in U_{ad}\times X_k$ are the optimal solution of the optimal control problem \eqref{BC_Functioal} and the semi-discrete optimal control problem \eqref{time_semi_control}, respectively, 
under the non-increasing condition on time steps. For the full discretization where $\Omega$ is a polytope, we use the DG(0)-CG(1) method to the state and variational discretization to the control, and obtain the convergence order $O(h^\frac{1}{2})$  for the error between the semi-discrete and fully discrete solutions. Finally, combining the above two error estimates, we   obtain the following error estimate:
    \begin{equation}
    	\Vert\bar{u}-\bar{u}_{kh}\Vert_{L^2(I;L^2(\Gamma))}+\Vert \bar{y}-\bar{y}_{kh}\Vert_{L^2(I;L^2(\Omega))}\le C(h^{\frac{1}{2}}+k^{\frac{1}{4}}),
    \end{equation}
    where $(\bar{u},\bar{y})$ and $(\bar{u}_{kh},\bar{y}_{kh})$ denote the continuous  and fully discrete optimal pairs, respectively. As a byproduct, we obtain the following a priori error estimate for the fully discretization of parabolic equations with inhomogeneous Dirichlet data posed on polytopes
     \begin{equation}
    	\Vert {y}(u)-{y}_{kh}(u)\Vert_{L^2(I;L^2(\Omega))}\le C(h+k^{\frac{1}{2}}),
    \end{equation}
 where $y(u)$ and ${y}_{kh}(u)$ denote the solution of (\ref{BC_stateE}) and its fully discretization (\ref{spatial_dicrete_state}) for any given $u\in L^2(I;H^{1\over 2}(\Gamma))\cap H^{1\over 4}(I;L^2(\Gamma))$, respectively. To derive the above error estimates we develop a new error analysis by separating the error estimates for the temporal and spatial discretizations, which is more involved and technical compared to \cite{MeidnerVexler,MeidnerVexler2} due to the inhomogeneous Dirichlet data and low regularity solutions.

 The rest of this paper is organized as follows: In section 2 we present some preliminaries including the definition of very weak solutions and the well-posedness of parabolic equations. In section 3, we derive the first order optimality condition and the regularity of the optimal pair for the parabolic Dirichlet boundary control problem. In section 4, we present the DG(0) semi-discretization and DG(0)-CG(1) full discretization of parabolic boundary control problems, and give  the corresponding discrete first order optimality conditions. We devote section 5 to  the error analysis of   the temporal discretization and spatial discretization of inhomogeneous parabolic boundary value problems. Finally,   in section 6 we derive the error estimates for the  semi-discrete and   fully discrete optimal control problems.

	\section{Preliminaries}
	\setcounter{equation}{0}

Throughout the paper we follow the standard notations for differential operators, function spaces and norms
that can be found, for example, in \cite{BrennerScott2008,J.L1,V.Thom}, and assume that $C>0$ is a constant independent of $h$, $k$ and the given data.
	
Recall that in the optimal control problem (\ref{BC_Functioal}) we assume that
\begin{equation}\label{data-cond}
 f\in L^2(I;L^2(\Omega)), \quad u\in L^2(I;L^2(\Gamma)),  \quad y_0\in L^2(\Omega),
\end{equation}
 the solution of the state equation (\ref{BC_stateE}) has to be defined by using the method of transposition \cite{M. Berggren-2004860,J.L1}. This kind of solution  is called very weak solution whose definition is given below.

\begin{Definition}\label{def_T}
	For any given  $f $, $u $ and $y_0 $ satisfying \eqref{data-cond}, a function $y\in L^2(I;L^2(\Omega))$ is called the very weak solution of \eqref{BC_stateE}, if $y$ satisfies the identity
	\begin{equation}\label{IE_1}
		\int_{\Omega_T}y(-\partial_tz-\Delta z)dxdt=-\int_{\Sigma_T}u\partial_nzdsdt+\int_{\Omega_T}fzdxdt+\int_{\Omega}y_0z(\cdot,0)dx
	\end{equation}
	for any $z\in L^2(I;H^2(\Omega)\cap H^1_0(\Omega))\cap H^1(I;L^2(\Omega))$ with $z(\cdot,T)=0$. Equivalently, there holds
	\begin{equation}\label{IE_2}
		\int_{\Omega_T}ygdxdt=-\int_{\Sigma_T}u\partial_nzdsdt+\int_{\Omega_T}fzdxdt+\int_{\Omega}y_0z(\cdot,0)dx\quad\forall g\in L^2(I;L^2(\Omega)),
	\end{equation}
	 where $\partial_nz$ denotes the outward normal derivative of $z$, and $z$ satisfies
	\begin{equation}\label{T_back_eq}
		\begin{cases}
			-\partial_tz-\Delta z=g\quad&\mbox{in}\ (0,T)\times\Omega,\\
			z=0 \qquad &\mbox{on}\ (0,T)\times\Gamma,\,\\
			z(\cdot,T)=0\qquad&\mbox{in}\ \Omega.
		\end{cases}
	\end{equation}
\end{Definition}
\begin{Remark}
	By the well-posedness of parabolic equations (cf. \cite[Chapter 7]{L. C. EVANS}), we know that $z\in L^2(I;H^2(\Omega)\cap H^1_0(\Omega))\cap H^1(I;L^2(\Omega))$, therefore $\partial_n z\in L^2(I;H^{\frac{1}{2}}(\Gamma))$ by the trace theorem (cf. \cite[Chapter 4]{J.L1} and \cite{ECasasandJPRaymond}). Using the embedding relation
	\begin{equation}
		L^2(I;H^2(\Omega)\cap H^1_0(\Omega))\cap H^1(I;L^2(\Omega))\hookrightarrow C(\bar{I};H^1(\Omega)),\nonumber
	\end{equation}
	we have $z\in C(\bar{I};L^2(\Omega))$. Hence, Definition \ref{def_T} is well-defined.
\end{Remark}

The existence and uniqueness of a very weak solution of equation \eqref{BC_stateE} is verified in the following lemma; see also \cite{WM-Hinze,GongHinzeZhou}.
\begin{Lemma}\label{T_Solution_E}
	For any given $y_0 $, $u $ and $f $ satisfying \eqref{data-cond}, there exists a unique very weak solution $y\in L^2(I;L^2(\Omega))$ of equation \eqref{BC_stateE}. Moreover, there  holds
	\begin{equation}
		\Vert y\Vert_{L^2(I;L^2(\Omega))}\le C \big(\Vert y_0\Vert_{L^2(\Omega)}+\Vert u\Vert_{L^2(I;L^2(\Gamma))}+\Vert f\Vert_{L^2(I;L^2(\Omega))}\big).\nonumber
	\end{equation}
\end{Lemma}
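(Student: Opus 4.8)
The plan is to construct the very weak solution by transposition and then invoke the Riesz representation theorem, so that both existence/uniqueness and the stability bound follow at once. First I would fix $g\in L^2(I;L^2(\Omega))$ and use the well-posedness and $H^2$-regularity of the backward parabolic problem \eqref{T_back_eq}: since $\Omega$ is convex polytopal or smooth and $z(\cdot,T)=0$, standard parabolic theory (\cite[Chapter 7]{L. C. EVANS}) yields a unique $z=z(g)\in L^2(I;H^2(\Omega)\cap H^1_0(\Omega))\cap H^1(I;L^2(\Omega))$ with
\begin{equation}
\Vert z\Vert_{L^2(I;H^2(\Omega))}+\Vert z\Vert_{H^1(I;L^2(\Omega))}\le C\Vert g\Vert_{L^2(I;L^2(\Omega))}.\nonumber
\end{equation}
The map $g\mapsto z(g)$ is linear, and $g\leftrightarrow z$ is in fact a bijection onto the space of admissible test functions in \eqref{IE_1} with $z(\cdot,T)=0$ (set $g=-\partial_tz-\Delta z$ in one direction, solve \eqref{T_back_eq} in the other), which already shows that \eqref{IE_1} and \eqref{IE_2} are equivalent.

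Next I would bound the right-hand side of \eqref{IE_2} as a linear functional of $g$. By the trace theorem (\cite[Chapter 4]{J.L1}, \cite{ECasasandJPRaymond}), $\Vert\partial_nz\Vert_{L^2(I;H^{1/2}(\Gamma))}\le C\Vert z\Vert_{L^2(I;H^2(\Omega))}$, hence in particular $\Vert\partial_nz\Vert_{L^2(I;L^2(\Gamma))}\le C\Vert g\Vert_{L^2(I;L^2(\Omega))}$; the embedding $L^2(I;H^2(\Omega)\cap H^1_0(\Omega))\cap H^1(I;L^2(\Omega))\hookrightarrow C(\bar{I};L^2(\Omega))$ recorded in the preceding Remark gives $\Vert z(\cdot,0)\Vert_{L^2(\Omega)}\le C\Vert g\Vert_{L^2(I;L^2(\Omega))}$; and trivially $\Vert z\Vert_{L^2(I;L^2(\Omega))}\le C\Vert g\Vert_{L^2(I;L^2(\Omega))}$. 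Combining these with the Cauchy--Schwarz inequality, the functional
\begin{equation}
F(g):=-\int_{\Sigma_T}u\,\partial_nz\,dsdt+\int_{\Omega_T}fz\,dxdt+\int_{\Omega}y_0z(\cdot,0)\,dx\nonumber
\end{equation}
is bounded on $L^2(I;L^2(\Omega))$ with $\Vert F\Vert\le C\big(\Vert y_0\Vert_{L^2(\Omega)}+\Vert u\Vert_{L^2(I;L^2(\Gamma))}+\Vert f\Vert_{L^2(I;L^2(\Omega))}\big)$.

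Finally, the Riesz representation theorem furnishes a unique $y\in L^2(I;L^2(\Omega))$ with $\int_{\Omega_T}yg\,dxdt=F(g)$ for all $g$, i.e.\ $y$ satisfies \eqref{IE_2}, and $\Vert y\Vert_{L^2(I;L^2(\Omega))}=\Vert F\Vert$, which is precisely the asserted estimate. Uniqueness is immediate: the difference of two very weak solutions pairs to zero against every $g\in L^2(I;L^2(\Omega))$ and therefore vanishes. The only genuinely nontrivial ingredient is the $H^2$-regularity of the adjoint state --- this is where convexity of the polytope (or smoothness of $\Omega$) is used, and it is what guarantees $\partial_nz\in L^2(I;H^{1/2}(\Gamma))\hookrightarrow L^2(I;L^2(\Gamma))$ so that the boundary term in $F$ is well defined; the remainder is a routine duality argument.
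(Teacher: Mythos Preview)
Your argument is correct and is precisely the standard transposition/duality proof for very weak solutions: bound the adjoint data $(\partial_n z,\,z,\,z(\cdot,0))$ in terms of $g$ via maximal parabolic $H^2$-regularity, deduce that the right-hand side of \eqref{IE_2} defines a bounded linear functional on $L^2(I;L^2(\Omega))$, and represent it by Riesz. The paper itself does not supply a proof of this lemma; it simply cites \cite{WM-Hinze,GongHinzeZhou}, whose arguments are exactly of the type you wrote. So your proposal fills in what the paper omits, by the same route those references take.

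One minor remark: your claim that $g\leftrightarrow z$ is a bijection onto \emph{all} test functions $z\in L^2(I;H^2\cap H^1_0)\cap H^1(I;L^2)$ with $z(\cdot,T)=0$ is not needed for the lemma and is slightly delicate (surjectivity requires knowing a priori that every such $z$ arises from some $g$, which is immediate once you set $g=-\partial_tz-\Delta z$, but this is a statement about the equivalence of \eqref{IE_1} and \eqref{IE_2} rather than about existence of $y$). For the lemma as stated you only need \eqref{IE_2}, and your Riesz argument already delivers both existence and uniqueness there; the equivalence with \eqref{IE_1} is a side comment, not a load-bearing step.
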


In addition, if the data of equation (\ref{BC_stateE}) have higher regularity, we can expect improved regularity of the solution which depends also on the smoothness of $\Omega$.
\begin{Lemma}\label{state_equation_regularity}
	Suppose that $\Omega$ is a bounded smooth domain. For any given $f\in L^2(I;L^2(\Omega))$,    $$u\in H^{s-\frac{1}{4}}(I;L^2(\Gamma))\cap L^2(I;H^{2s-\frac{1}{2}}(\Gamma))\  \text{ and } \ y_0\in H^{2s-1}(\Omega)$$
	with $\frac{1}{4}\le s<\frac{3}{4}$,    the solution $y$ of equation \eqref{BC_stateE} belongs to $ L^2(I;H^{2s}(\Omega))\cap H^s(I;L^2(\Omega))$ and satisfies:
	\begin{align*}
		\Vert y\Vert_{L^2(I;H^{2s}(\Omega))}+\Vert y\Vert_{H^s(I;L^2(\Omega))}&\le C\big(\Vert f\Vert_{L^2(I;L^2(\Omega))}+\Vert y_0\Vert_{H^{2s-1}(\Omega)}+\Vert u\Vert_{H^{s-\frac{1}{4}}(I;L^2(\Gamma))}\\
		&\qquad +\Vert u\Vert_{L^2(I;H^{2s-\frac{1}{2}}(\Gamma))}\big).\nonumber
	\end{align*}
\end{Lemma}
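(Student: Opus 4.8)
The plan is to establish the regularity stated in \Cref{state_equation_regularity} by decomposing the very weak solution $y$ into three pieces according to the data $f$, $y_0$ and $u$, treating each separately and invoking interpolation. Write $y = y_f + y_0^{\sharp} + y_u$, where $y_f$ solves the homogeneous-boundary, zero-initial-data parabolic problem with source $f$, $y_0^{\sharp}$ solves the homogeneous-boundary parabolic problem with zero source and initial datum $y_0$, and $y_u$ solves the parabolic problem with zero source and zero initial datum but boundary datum $u$. By linearity and the uniqueness from \Cref{T_Solution_E}, this decomposition is legitimate once each summand is shown to be a very weak solution of the appropriate subproblem. The contributions of $y_f$ and $y_0^{\sharp}$ are handled by the classical maximal parabolic regularity theory on smooth domains: $y_f \in L^2(I;H^2(\Omega)\cap H^1_0(\Omega))\cap H^1(I;L^2(\Omega))$ with the corresponding bound by $\Vert f\Vert_{L^2(I;L^2(\Omega))}$, which dominates the claimed norm since $s<1$; and $y_0^{\sharp}$ satisfies $\Vert y_0^{\sharp}\Vert_{L^2(I;H^{2s}(\Omega))}+\Vert y_0^{\sharp}\Vert_{H^s(I;L^2(\Omega))}\le C\Vert y_0\Vert_{H^{2s-1}(\Omega)}$ by the smoothing estimates for the analytic semigroup generated by the Dirichlet Laplacian (this is the standard characterization of the real interpolation spaces between $D(-\Delta)$ and $L^2(\Omega)$ via the trace method, giving $H^{2s-1}(\Omega) = (L^2, D(-\Delta))_{s-1/2,2}$ for the relevant range and the corresponding time-regularity of $e^{t\Delta}y_0$).

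The essential point, and the main obstacle, is the boundary term $y_u$. Here I would use the lifting/transposition approach: for smooth $\Omega$ one has a bounded extension operator from boundary data into the domain that is compatible with the parabolic scales, so that if $u\in L^2(I;H^{2s-1/2}(\Gamma))\cap H^{s-1/4}(I;L^2(\Gamma))$ then there is an extension $\tilde u$ with $\tilde u \in L^2(I;H^{2s}(\Omega))\cap H^s(I;L^2(\Omega))$ and $\tilde u|_{\Sigma_T}=u$, with norm controlled by the anisotropic boundary norm of $u$ — this is exactly where the coupling of the spatial order $2s$ with the temporal order $s$ and the precise boundary exponents $2s-1/2$ and $s-1/4$ (the trace loses half a derivative in space and a quarter in the parabolically-scaled time) enters. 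Then $w := y_u - \tilde u$ solves a parabolic problem with homogeneous Dirichlet data, zero initial datum (after also arranging $\tilde u(\cdot,0)=0$, which is possible since $2s-1<1$ so no compatibility obstruction arises, or by absorbing the resulting initial datum into the $y_0^{\sharp}$ estimate), and right-hand side $-\partial_t \tilde u + \Delta \tilde u \in L^2(I;L^2(\Omega)) + (\text{a negative-order space})$. For the endpoint $s=1/2$ the source is genuinely in $L^2(I;L^2(\Omega))$ and maximal regularity closes the estimate; for $s\neq 1/2$ one runs the argument at $s=1/4$ (where the lifting lands $w$ in $L^2(I;H^{1/2})\cap H^{1/4}(I;L^2)$ by a direct energy estimate) and at $s=3/4$, and interpolates.

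To organize this cleanly I would first prove the two endpoint cases $s=1/4$ and $s$ near $3/4$ (or $s=3/4$ with the $\varepsilon$-room handled at the very end, since the statement restricts to $s<3/4$ strictly), establishing in each case the mapping
\begin{equation}
 (f,y_0,u)\ \longmapsto\ y,\qquad \bigl[L^2(I;L^2(\Omega))\bigr]\times H^{2s-1}(\Omega)\times \bigl[H^{s-1/4}(I;L^2(\Gamma))\cap L^2(I;H^{2s-1/2}(\Gamma))\bigr]\ \to\ L^2(I;H^{2s}(\Omega))\cap H^s(I;L^2(\Omega))
\end{equation}
is bounded, and then obtain the full range $1/4\le s<3/4$ by real or complex interpolation of these vector-valued estimates in the parameter $s$, using that all the spaces involved form interpolation scales (Lions--Magenes for the anisotropic boundary spaces $H^{r,r/2}$ of parabolic type, and standard Sobolev interpolation in $\Omega$). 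The uniqueness in \Cref{T_Solution_E} guarantees that the $y$ produced by this construction coincides with the very weak solution, so the estimate transfers. The delicate bookkeeping — ensuring the extension operator respects both the spatial and temporal fractional scales simultaneously, and checking that the intermediate negative-order source term is absorbed correctly near $s=1/4$ — is where essentially all the work lies; the semigroup and maximal-regularity inputs for $y_f$ and $y_0^{\sharp}$ are routine.
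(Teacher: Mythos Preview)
The paper does not actually prove this lemma: it is stated as a preliminary regularity result without proof, to be read as a citation of the classical Lions--Magenes theory (reference \cite{J.L1} in the paper). So there is no ``paper's own proof'' to compare against.

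Your sketch is the standard route one takes to establish such anisotropic parabolic regularity on smooth domains, and it is essentially what Lions--Magenes do: linear decomposition into source, initial-datum, and boundary-datum contributions; maximal $L^2$-regularity and semigroup smoothing for the first two; a compatible lifting of the boundary datum for the third; and interpolation in $s$ to fill in the range. The exponent bookkeeping you record (spatial loss of $1/2$, temporal loss of $1/4$ on the lateral boundary under the parabolic scaling) is correct, and the restriction $s<3/4$ avoids the compatibility condition at $t=0$ between $y_0$ and $u$ that would otherwise appear. One small point worth tightening if you were to write this out in full: at the lower endpoint $s=1/4$ the initial datum lives in $H^{-1/2}(\Omega)$, so the semigroup estimate for $y_0^{\sharp}$ must be read in the dual/transposition sense rather than as a classical smoothing bound; this is handled in Lions--Magenes by interpolation between the very weak and weak solution frameworks, but your phrasing ``standard characterization of the real interpolation spaces'' glosses over it.
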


The regularity of solution to (\ref{T_back_eq}) can also be improved if we have sufficient smoothness on the data and the domain $\Omega$. When $\Omega$ is a convex polytope, the weak solution of  equation (\ref{T_back_eq}) belongs to $H^1(I;L^2(\Omega))\cap L^2(I;H^1_0(\Omega)\cap H^2(\Omega)) $ for any $g\in L^2(I;L^2(\Omega))$. In case that $\Omega$ has a smooth boundary, the regularity of  solution  to  (\ref{T_back_eq}) is presented in the following lemma (\cite{L. C. EVANS,J.L1}).
\begin{Lemma}\label{back_regularity}
	Let $\Omega$ be a smooth bounded domain. For any given $0\le r<\frac{1}{2}$ and function $g\in L^2(I;H^{2r}(\Omega))\cap H^r(I;L^2(\Omega))$, the solution $z$ of equation \eqref{T_back_eq} belongs to $ L^2(I;H^{2+2r}(\Omega))\cap H^{1+r}(I;L^2(\Omega))$ and there holds the following estimate:
	\begin{equation}
		\Vert z\Vert_{L^2(I;H^{2+2r}(\Omega))}+\Vert z\Vert_{H^{1+r}(I;L^2(\Omega))}\le C\big(\Vert g\Vert_{L^2(I;H^{2r}(\Omega))}+\Vert g\Vert_{H^r(I;L^2(\Omega))}\big).\nonumber
	\end{equation}
If, in addition, $g\in L^2(I;H^1(\Omega))\cap H^{\frac{1}{2}}(I;L^2(\Omega))$ and
\begin{equation}\label{compatibility_condition}
-\partial_t z(T)-\Delta z(T)=g(T)\quad \mbox{in}\ \Omega,
\end{equation}
then $ z\in L^2(I;H^3(\Omega))\cap H^{\frac{3}{2}}(I;L^2(\Omega))$ (cf. \cite[Page 32]{J.L1}) and $\partial_t z\in H^{\frac{1}{2}}(I;L^2(\Omega))\cap L^2(I;H^1(\Omega))$ (cf. \cite[Page 12]{J.L1}). Moreover, there holds the  estimate
\begin{equation}
\Vert z\Vert_{L^2(I;H^3(\Omega))}+\Vert z\Vert_{H^{\frac{3}{2}}(I;L^2(\Omega))}+\left\Vert \partial_t z\right\Vert_{L^2(I;H^1(\Omega))}\le C(\Vert g\Vert_{L^2(I;H^1(\Omega))}+\Vert g\Vert_{H^{\frac{1}{2}}(I;L^2(\Omega))}).
\end{equation}
\end{Lemma}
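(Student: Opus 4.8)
The plan is to reduce the backward problem \eqref{T_back_eq} to a standard forward parabolic initial–boundary value problem by reversing time, and then to combine classical $L^2$ maximal parabolic regularity on smooth domains with the Lions–Magenes theory of anisotropic Sobolev spaces, obtaining the fractional orders by interpolation. First I would set $w(t,x):=z(T-t,x)$ and $\tilde g(t,x):=g(T-t,x)$, so that $w$ solves $\partial_t w-\Delta w=\tilde g$ in $\Omega_T$, $w=0$ on $\Sigma_T$, $w(\cdot,0)=0$ in $\Omega$. Time reversal is an isometry of every space $H^{\sigma}(I;X)$ and of $L^2(I;H^{\sigma}(\Omega))$, and it turns the compatibility relation \eqref{compatibility_condition} at $t=T$ into $\partial_t w(\cdot,0)-\Delta w(\cdot,0)=\tilde g(\cdot,0)$ at $t=0$; since $w(\cdot,0)=0$ this is just $\partial_t w(\cdot,0)=\tilde g(\cdot,0)$. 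Hence it suffices to prove the two stated estimates for $w$.

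For the first assertion ($0\le r<\tfrac12$) the base case $r=0$, i.e. $\tilde g\in L^2(I;L^2(\Omega))$, is classical maximal parabolic regularity on a smooth domain, the homogeneous initial datum $w(\cdot,0)=0$ removing any compatibility obstruction (cf. \cite[Ch.~7]{L. C. EVANS}); this gives boundedness of $\tilde g\mapsto w$ from $L^2(I;L^2(\Omega))$ into $L^2(I;H^2(\Omega)\cap H^1_0(\Omega))\cap H^1(I;L^2(\Omega))$. To reach general $r$ I would interpolate this with an integer endpoint $m\ge 2$, working in the anisotropic spaces $H^{2\sigma,\sigma}(\Omega_T):=L^2(I;H^{2\sigma}(\Omega))\cap H^{\sigma}(I;L^2(\Omega))$ of \cite[Ch.~4]{J.L1}: there the solution operator is bounded from the closed subspace of $H^{2m-2,m-1}(\Omega_T)$ cut out by the compatibility relations of orders $\le m-1$ into $H^{2m,m}(\Omega_T)$, and those relations are inactive below the time-trace threshold $\sigma=\tfrac12$. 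Real interpolation then yields, for every $r\in(0,\tfrac12)$, boundedness of $\tilde g\mapsto w$ from $H^{2r,r}(\Omega_T)$ into $H^{2+2r,1+r}(\Omega_T)$ with no compatibility condition, which is the first estimate after undoing the time reversal.

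For the second assertion I would add the hypotheses $\tilde g\in L^2(I;H^1(\Omega))\cap H^{1/2}(I;L^2(\Omega))$ and \eqref{compatibility_condition}, and differentiate the forward equation in time: $v:=\partial_t w$ solves $\partial_t v-\Delta v=\partial_t\tilde g$, $v=0$ on $\Sigma_T$, with $v(\cdot,0)=\partial_t w(\cdot,0)=\tilde g(\cdot,0)$ by the reduction step, and \eqref{compatibility_condition} is precisely what guarantees this initial value is attained in the correct space. Applying the maximal regularity estimate in the shifted scale — right-hand side $\partial_t\tilde g\in H^{-1/2}(I;L^2(\Omega))\hookrightarrow H^{-1}(I;L^2(\Omega))$, initial datum $\tilde g(\cdot,0)\in L^2(\Omega)$ — gives $\partial_t w\in L^2(I;H^1(\Omega))\cap H^{1/2}(I;L^2(\Omega))$, hence $w\in H^{3/2}(I;L^2(\Omega))$; and rewriting the equation as $-\Delta w=\tilde g-\partial_t w\in L^2(I;H^1(\Omega))$ together with elliptic regularity on the smooth domain $\Omega$ for a.e.\ $t$ gives $w\in L^2(I;H^3(\Omega))$. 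Collecting the bounds and reversing time yields the second estimate; all constants are data-independent by linearity.

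The hard part will be the bookkeeping of compatibility conditions along the fractional scale: one must check that for $r<\tfrac12$ the interpolated solution operator genuinely requires no compatibility (so that the first assertion holds on the whole range, and in particular the spaces $[X_0,X_1^{\mathrm{comp}}]_\theta$ coincide with $H^{2r,r}(\Omega_T)$ below threshold), while at the endpoint $r=\tfrac12$ the single relation \eqref{compatibility_condition} is exactly what is needed and nothing more. This, together with the fact that $I$ is a bounded interval — so that $H^{\sigma}(I;X)$, its interpolation spaces, and the relevant trace/extension operators must be handled as in \cite[Ch.~1,~4]{J.L1} rather than on the whole line — is where essentially all the effort lies; the parabolic and elliptic regularity inputs themselves are standard.
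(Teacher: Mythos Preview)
The paper does not supply its own proof of this lemma: it is stated as a quotation of known parabolic regularity, with inline citations to \cite{L. C. EVANS} for the base case and to \cite[pp.~12,~32]{J.L1} for the fractional and endpoint statements. Your plan is therefore not competing with an argument in the paper but rather reconstructing the content behind those citations, and for the first assertion it does so correctly: the time reversal, the $r=0$ maximal regularity, and the interpolation in the Lions--Magenes anisotropic scale $H^{2\sigma,\sigma}(\Omega_T)$ together with the observation that compatibility relations are vacuous below the time-trace threshold $\sigma=\tfrac12$ is exactly the mechanism of \cite[Ch.~4]{J.L1}.

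For the second assertion your differentiation-in-time argument is morally right but the step ``right-hand side $\partial_t\tilde g\in H^{-1/2}(I;L^2(\Omega))\hookrightarrow H^{-1}(I;L^2(\Omega))$, initial datum $\tilde g(\cdot,0)\in L^2(\Omega)$, hence $\partial_t w\in L^2(I;H^1(\Omega))\cap H^{1/2}(I;L^2(\Omega))$'' is not a standard maximal-regularity statement and does not follow from the weak variational estimate (which would only place $v$ in $L^2(I;H^1_0)$ with $\partial_t v\in L^2(I;H^{-1})$, not $v\in H^{1/2}(I;L^2)$). You would need the negative-index extension of the Lions--Magenes isomorphism, i.e.\ source in $H^{-1,-1/2}(\Omega_T)$ mapping to solution in $H^{1,1/2}(\Omega_T)$, and to check that the initial datum $\tilde g(0)$ lies in the correct trace space. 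A cleaner route, and the one the paper's citation points to, is to invoke directly the Lions--Magenes isomorphism theorem at the level $r=\tfrac12$: with $\tilde g\in H^{1,1/2}(\Omega_T)$, zero initial data, and the single zeroth-order compatibility relation (which after time reversal is precisely \eqref{compatibility_condition}), one gets $w\in H^{3,3/2}(\Omega_T)$ in one stroke, from which $\partial_t w\in H^{1,1/2}(\Omega_T)$ follows by the structure of these spaces \cite[p.~12]{J.L1}. Your elliptic step $-\Delta w=\tilde g-\partial_t w\in L^2(I;H^1(\Omega))\Rightarrow w\in L^2(I;H^3(\Omega))$ is then redundant but correct.
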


The above regularity results   will be applied to study the regularity of solutions to the optimal control problem (\ref{BC_Functioal}). 
In general, Lemma \ref{state_equation_regularity} will be used to obtain the regularity of state variables, and Lemma \ref{back_regularity} is helpful to improve  the regularity of control variables.

\section{The optimal control problem}
\setcounter{equation}{0}
For any $u\in L^2(I;L^2(\Gamma))$, the unique solution of the state equation (\ref{BC_stateE}), defined in Definition \ref{def_T}, is denoted by $y(u)$. Therefore, we can define the following linear operator:
\begin{equation}
	S:L^2(I;L^2(\Gamma))\to L^2(I;L^2(\Omega)),\ \quad \ \ Su:=y(u),
\end{equation}
which is bounded and Fr\'{e}chet differentiable from $L^2(I;L^2(\Gamma))$ to $L^2(I;L^2(\Omega))$. Then the optimal control problem \eqref{BC_Functioal} can be equivalently written as
\begin{equation}\label{optimal_P}
	\min\limits_{u\in U_{ad}}\hat{J}(u):=J(Su,u).
\end{equation}
It is easy to check that the above optimization problem admits a unique solution $\bar u$ (cf. \cite[Chapter \uppercase  \expandafter{\romannumeral3}]{J. L. LIONS} or \cite[Chapter  1]{HinzePinnauUlbrich}). We denote by $(\bar{u},\bar{y})$ the optimal pair with $\bar y$ the associated state. 

For any $u\in L^2(I;L^2(\Gamma))$, the Frech\'et derivative of the cost functional at $u$ reads
\begin{equation}
	\hat{J}^{'}(u)v=\int_{\Sigma_T}\alpha uvdsdt+\int_{\Omega_T}(y-y_d)\tilde{y}(v)dxdt\qquad\forall v\in L^2(I;L^2(\Gamma)),\nonumber
\end{equation}
where $\tilde{y}(v)\in L^2(I;L^2(\Omega))$ is the solution of the following equation:
\begin{equation}
	\begin{cases}
		\partial_t \tilde{y}(v)-\Delta\tilde{y}(v)=0\quad&\mbox{in}\ (0,T)\times\Omega,\\
		\tilde{y}(v)=v \quad\,&\mbox{on}\ (0,T)\times\Gamma, \\\nonumber
		\tilde{y}(v)(\cdot,0)=0\quad\,&\mbox{in}\ \Omega.
	\end{cases}
\end{equation}

The first order sufficient and necessary optimality condition is given in the following theorem.
\begin{Theorem}\label{Necess_opti_condi}
	The pair $(\bar{u},\bar y)\in U_{ad}\times L^2(I;L^2(\Omega))$ is the optimal solution of the optimal control problem \eqref{BC_Functioal} if and only if $\bar y:=S\bar u=y(\bar u)$ and
	\begin{equation}\label{V_Inequation1}
		\hat{J}^{'}(\bar{u})(v-\bar{u})\ge0\qquad\forall v\in U_{ad}.
	\end{equation}
	Furthermore, \eqref{V_Inequation1} is equivalent to
	\begin{equation}\label{V_Inequation2}
		\hat{J}^{'}(\bar{u})(v-\bar{u})=\int_{\Sigma_T}(\alpha \bar{u}-\partial_n \bar{z})(v-\bar{u})dsdt\ge0\qquad\forall v\in U_{ad},
	\end{equation}
	where $\bar{z}\in L^2(I;H^2(\Omega)\cap H^1_0(\Omega))\cap H^1(I;L^2(\Omega))$ is the so-called adjoint variable satisfying
	\begin{equation}\label{Conjugate_EQ}
		\begin{cases}
			-\partial_t \bar{z}-\Delta \bar{z}=\bar{y}-y_d\quad\,&\mbox{in}\ (0,T)\times\Omega,\\
			\bar{z}=0 \qquad\,\quad\,\,& \mbox{on}\ (0,T)\times\Gamma, \\
			\bar{z}(\cdot,T)=0\qquad\,\quad\,\,&\mbox{in}\ \Omega\,.
		\end{cases}
	\end{equation}
\end{Theorem}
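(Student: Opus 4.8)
The plan is to prove the equivalence in two stages: first the abstract characterization of the minimizer of a convex, Fréchet-differentiable functional over a closed convex set by a variational inequality, which gives the equivalence of optimality with \eqref{V_Inequation1}; and then a duality (transposition) argument that rewrites $\hat J'(\bar u)$ using the adjoint state $\bar z$, turning \eqref{V_Inequation1} into \eqref{V_Inequation2}.

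For the first stage, recall from the discussion preceding the theorem that $S$ is linear and bounded, so $u\mapsto\tfrac12\|Su-y_d\|_{L^2(I;L^2(\Omega))}^2$ is convex and, combined with the strictly convex term $\tfrac{\alpha}{2}\|u\|_{L^2(I;L^2(\Gamma))}^2$ with $\alpha>0$, makes $\hat J$ strictly convex and continuously Fréchet differentiable on $L^2(I;L^2(\Gamma))$; existence and uniqueness of $\bar u$ have already been recorded, and optimality of $(\bar u,\bar y)$ for \eqref{BC_Functioal} is clearly equivalent to optimality of $\bar u$ for the reduced problem \eqref{optimal_P} together with $\bar y=S\bar u=y(\bar u)$. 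Since $U_{ad}$ is nonempty, closed and convex, $\bar u+t(v-\bar u)\in U_{ad}$ for all $v\in U_{ad}$ and $t\in[0,1]$; if $\bar u$ is optimal, differentiating $t\mapsto\hat J(\bar u+t(v-\bar u))$ at $t=0^+$ yields \eqref{V_Inequation1}, while conversely the convexity inequality $\hat J(v)\ge\hat J(\bar u)+\hat J'(\bar u)(v-\bar u)$ shows that \eqref{V_Inequation1} forces $\hat J(v)\ge\hat J(\bar u)$ for every $v\in U_{ad}$. This part is routine.

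For the second stage, the key identity is
\[
\int_{\Omega_T}(\bar y-y_d)\,\tilde y(v)\,dx\,dt=-\int_{\Sigma_T}v\,\partial_n\bar z\,ds\,dt\qquad\forall v\in L^2(I;L^2(\Gamma)).
\]
To establish it I would first confirm that $\bar z$ has the asserted regularity: since $\bar y-y_d\in L^2(I;L^2(\Omega))$ and $\Omega$ is convex (polytopal or smooth), the backward heat equation \eqref{Conjugate_EQ} admits a unique solution $\bar z\in L^2(I;H^2(\Omega)\cap H^1_0(\Omega))\cap H^1(I;L^2(\Omega))$ with $\bar z(\cdot,T)=0$, as noted just before Lemma~\ref{back_regularity}; in particular $\bar z$ is an admissible test function in Definition~\ref{def_T}. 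Now $\tilde y(v)$ is, by its defining equation, exactly the very weak solution of \eqref{BC_stateE} with data $f=0$, $y_0=0$ and boundary value $v$, so the equivalent very weak formulation \eqref{IE_2} applied to $\tilde y(v)$ with the test data $g=\bar y-y_d$ and $z=\bar z$ yields the identity immediately, the $f$- and $y_0$-terms vanishing. Substituting into the expression for $\hat J'(\bar u)v$ stated before the theorem gives $\hat J'(\bar u)v=\int_{\Sigma_T}(\alpha\bar u-\partial_n\bar z)v\,ds\,dt$, and replacing $v$ by $v-\bar u$ turns \eqref{V_Inequation1} into \eqref{V_Inequation2}.

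The only genuine difficulty is the transposition identity of the previous paragraph: $\tilde y(v)$ has merely $L^2(I;L^2(\Omega))$ regularity, so one cannot integrate by parts against $\bar z$ naively. This is precisely what the very weak formulation of Definition~\ref{def_T} is designed to circumvent, so the argument reduces to checking that $\bar z$ qualifies as a legitimate test function, which in turn rests on the $H^2$-regularity of the backward parabolic problem on convex polytopes (respectively the smooth-domain regularity of Lemma~\ref{back_regularity}). Everything else is standard convex-optimization machinery.
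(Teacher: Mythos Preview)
Your argument is correct. Note, however, that the paper states this theorem without proof, treating it as standard; the approach you outline---first the abstract variational inequality for convex problems, then the transposition identity obtained by testing the very weak formulation of $\tilde y(v)$ against the adjoint $\bar z$---is exactly the argument the paper later writes out in detail for the semi-discrete analogue (Theorem~\ref{time_discrete_NC}), so your proof is in full agreement with the paper's intended reasoning.
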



	The optimality condition (\ref{V_Inequation2}) can be equivalently written as
	\begin{equation}\label{optimal_control}
		\bar{u}=P_{U_{ad}}\Big(\frac{1}{\alpha}\partial_n \bar{z}\Big),
	\end{equation}
	where $P_{U_{ad}}$ is the orthogonal projection operator from $L^2(I;L^2(\Gamma))$ onto the admissible set $U_{ad}$ that preserves the $H^s$-regularity $(s\le 1)$ (cf. \cite[Page 114]{F.Trltzsch-2004860}). The above relationship between the optimal control and adjoint state allows us to improve the regularity of solutions to the optimal control problem. 
\begin{Theorem}\label{optimall_regularity}
Assume that $\Omega$ is a bounded smooth domain and let $(\bar{u}, \bar{y}, \bar{z})$
 be the optimal solution of the optimal control problem \eqref{BC_Functioal}. Then for any given  $y_0\in H^{2s-1}(\Omega)\ (\frac{1}{2}\le s<\frac{3}{4})$, $y_d\in L^2(I;H^{2s^\prime-\frac{1}{2}}(\Omega))\cap H^{s^\prime-\frac{1}{4}}(I;L^2(\Omega))\ (s-\frac{1}{4}\le s^\prime\le s)$ and $f\in L^2(I;L^2(\Omega))$, there hold
		\begin{align*}
			\bar{y}\in L^2(I;H^{2s}(\Omega))\cap H^s(I;L^2(\Omega)),\quad \bar{u}\in L^2(I;H^{{\rm min}\{2s^\prime,1\}}(\Gamma))\cap H^{s^\prime}(I;L^2(\Gamma)),\\
			\bar{z}\in L^2(I;H^{2s^\prime+\frac{3}{2}}(\Omega)\cap H^1_0(\Omega))\cap H^{s^\prime+\frac{3}{4}}(I;L^2(\Omega)),\\
 \partial_t\bar{z}\in L^2(I;H^{2(s^\prime-\frac{1}{4})}(\Omega))\cap H^{s^\prime-\frac{1}{4}}(I;L^2(\Omega)).
		\end{align*}
If, in addition, there holds the compatibility condition \eqref{compatibility_condition} with $g:=\bar{y}-y_d$ in the case $s^\prime=\frac{3}{4}$, then the adjoint state has the following improved regularity:
$$\bar{z}\in L^2(I;H^3(\Omega)\cap H^1_0(\Omega))\cap H^{\frac{3}{2}}(I;L^2(\Omega)),\quad \partial_t \bar{z}\in H^{\frac{1}{2}}(I;L^2(\Omega))\cap L^2(I;H^1(\Omega)).$$
	\end{Theorem}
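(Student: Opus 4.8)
The plan is to run a bootstrap argument that alternates between the state regularity of Lemma \ref{state_equation_regularity}, the adjoint regularity of Lemma \ref{back_regularity}, and the projection formula \eqref{optimal_control}, each step feeding improved data into the next. First I would record the base case: since $\bar u \in U_{ad} \subseteq L^2(I;L^2(\Gamma))$ and $y_0 \in H^{2s-1}(\Omega)$, $f\in L^2(I;L^2(\Omega))$, Lemma \ref{T_Solution_E} gives $\bar y \in L^2(I;L^2(\Omega))$, so the right-hand side $\bar y - y_d$ of the adjoint equation \eqref{Conjugate_EQ} lies in $L^2(I;H^{2s'-\frac12}(\Omega))\cap H^{s'-\frac14}(I;L^2(\Omega))$ once we use the assumed regularity of $y_d$ together with the (weaker, but sufficient at this stage) membership $\bar y\in L^2(I;L^2(\Omega))$; note $2s'-\frac12 \ge 0$ and $s'-\frac14\ge 0$ under the hypotheses $s'\ge s-\frac14 \ge \frac14$, so Lemma \ref{back_regularity} applies with $r = s'-\frac14 \in [0,\frac12)$. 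This yields $\bar z \in L^2(I;H^{2+2r}(\Omega))\cap H^{1+r}(I;L^2(\Omega)) = L^2(I;H^{2s'+\frac32}(\Omega))\cap H^{s'+\frac34}(I;L^2(\Omega))$ and $\partial_t\bar z\in L^2(I;H^{2(s'-\frac14)}(\Omega))\cap H^{s'-\frac14}(I;L^2(\Omega))$, which are exactly the claimed bounds for $\bar z$ and $\partial_t\bar z$.

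Next I would extract the control regularity from the projection formula. By the trace theorem, $\bar z \in L^2(I;H^{2s'+\frac32}(\Omega)\cap H^1_0(\Omega))$ gives $\partial_n\bar z \in L^2(I;H^{2s'+\frac12}(\Gamma))$, hence $\partial_n\bar z\in L^2(I;H^{\min\{2s',1\}}(\Gamma))$ after noting $2s'+\frac12 \ge \min\{2s',1\}$; in the time variable, interpolating $\partial_t\bar z\in H^{s'-\frac14}(I;L^2(\Omega))$ and $\bar z\in H^{s'+\frac34}(I;L^2(\Omega))$ against the trace estimate for $\partial_n$ (which costs one half derivative in space, i.e.\ uses that $\partial_n : H^{\sigma+\frac12}(\Omega)\cap H^1_0\to H^\sigma(\Gamma)$ is bounded and that $\bar z$ has the requisite parabolic smoothing) produces $\partial_n\bar z\in H^{s'}(I;L^2(\Gamma))$. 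Applying $P_{U_{ad}}$, which preserves $H^\sigma$-regularity for $\sigma\le 1$ in space and (being a pointwise-in-time Nemytskii-type projection) also preserves Sobolev regularity up to order $1$ in time, gives $\bar u \in L^2(I;H^{\min\{2s',1\}}(\Gamma))\cap H^{s'}(I;L^2(\Gamma))$.

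Finally I would close the loop to upgrade $\bar y$. With the control regularity just obtained, $\bar u \in L^2(I;H^{\min\{2s',1\}}(\Gamma))\cap H^{s'}(I;L^2(\Gamma))$; since $s'\ge s-\frac14$ we have $s'-\frac14 \ge s-\frac12$, and since $s'\le s$ we need $\min\{2s',1\}\ge 2s-\frac12$, which holds because $2s-\frac12 < 1$ for $s<\frac34$ and $2s' \ge 2s-\frac12$. Thus $\bar u$ satisfies the hypotheses of Lemma \ref{state_equation_regularity} with exponent $s$, and together with $y_0\in H^{2s-1}(\Omega)$, $f\in L^2(I;L^2(\Omega))$ this yields $\bar y \in L^2(I;H^{2s}(\Omega))\cap H^s(I;L^2(\Omega))$, as claimed. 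For the last assertion, if the compatibility condition \eqref{compatibility_condition} holds with $g=\bar y - y_d$ at $s'=\frac34$, then $g \in L^2(I;H^1(\Omega))\cap H^{\frac12}(I;L^2(\Omega))$ (using $\bar y\in L^2(I;H^{2s}(\Omega))$ with $2s = \frac32 > 1$ and the time regularity of $\bar y$ and $y_d$), so the second part of Lemma \ref{back_regularity} applies directly and delivers $\bar z\in L^2(I;H^3(\Omega)\cap H^1_0(\Omega))\cap H^{\frac32}(I;L^2(\Omega))$ and $\partial_t\bar z\in H^{\frac12}(I;L^2(\Omega))\cap L^2(I;H^1(\Omega))$.

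The main obstacle I anticipate is the careful bookkeeping of the temporal Sobolev exponents through the trace operator and the projection $P_{U_{ad}}$: the trace $\partial_n$ does not act cleanly in time, so one must exploit parabolic anisotropic regularity (the characterization $L^2(I;H^{2+2r})\cap H^{1+r}(I;L^2)$ and its trace on $\Sigma_T$) rather than naively combining space and time norms, and one must verify that $P_{U_{ad}}$ — which is Lipschitz but not linear — preserves fractional-in-time Sobolev regularity of order $s'\le 1$, which is true because truncation $t\mapsto \max(u_a,\min(u_b,t))$ is Lipschitz and hence bounded on $H^\sigma$ for $\sigma\le 1$. The remaining inequalities among $s$, $s'$ needed to chain the three lemmas are elementary once written out, and I would relegate them to a short verification at the end.
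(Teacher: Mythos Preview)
Your bootstrap has a genuine gap at the very first step. You write that, after Lemma~\ref{T_Solution_E} gives only $\bar y\in L^2(I;L^2(\Omega))$, the right-hand side $\bar y - y_d$ of \eqref{Conjugate_EQ} already lies in $L^2(I;H^{2s'-\frac12}(\Omega))\cap H^{s'-\frac14}(I;L^2(\Omega))$, so that Lemma~\ref{back_regularity} can be applied with $r=s'-\frac14$. But this only works when $s'=\tfrac14$: for any $s'>\tfrac14$ the space $L^2(I;H^{2s'-\frac12}(\Omega))\cap H^{s'-\frac14}(I;L^2(\Omega))$ is strictly smaller than $L^2(I;L^2(\Omega))$, and the regularity of $y_d$ alone cannot put the sum $\bar y - y_d$ there. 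Since the hypotheses allow $s'$ anywhere in $[s-\tfrac14,s]\subset[\tfrac14,\tfrac34)$, your argument breaks down for all $s'>\tfrac14$, i.e.\ whenever $s>\tfrac12$ and $s'$ is not taken at the bottom of its range.

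The fix is to insert the intermediate rounds that the paper actually uses: first apply Lemma~\ref{back_regularity} with $r=0$ to get $\bar z\in L^2(I;H^2)\cap H^1(I;L^2)$, trace back $\bar u\in L^2(I;H^{1/2}(\Gamma))\cap H^{1/4}(I;L^2(\Gamma))$, and feed this into Lemma~\ref{state_equation_regularity} with exponent $\tfrac12$ to obtain $\bar y\in L^2(I;H^1)\cap H^{1/2}(I;L^2)$. Only then does $\bar y - y_d$ have enough regularity to run Lemma~\ref{back_regularity} at a positive $r$, and one more loop (adjoint $\to$ control $\to$ state) is needed to reach the final exponents $s,s'$. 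Your one-shot application of Lemma~\ref{back_regularity} at $r=s'-\tfrac14$ cannot be justified before these preliminary rounds have upgraded $\bar y$.
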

	\begin{proof}
		For  $\bar{u}\in L^2(I;L^2(\Gamma))$, $f\in L^2(I;L^2(\Omega))$ and $y_0\in L^2(\Omega)$, it follows from Lemma \ref{T_Solution_E} that  $\bar{y}\in L^2(I;L^2(\Omega))$. Choosing $r=0$ in Lemma \ref{back_regularity} and combining with $y_d\in L^2(I;L^2(\Omega))$, we have $\bar{z}\in L^2(I;H^2(\Omega)\cap H^1_0(\Omega))\cap H^1(I;L^2(\Omega))$. Then $\bar{u}\in L^2(I;H^{\frac{1}{2}}(\Gamma))\cap H^{\frac{1}{4}}(I;L^2(\Gamma))$ by applying the identity (\ref{optimal_control}) and the trace theorem (cf. \cite{KKunischandBVexler2007}).

	Next, choosing $s=\frac{1}{2}$ in Lemma \ref{state_equation_regularity} and using the fact $y_0\in L^2(\Omega)$, $f\in L^2(I;L^2(\Omega))$, we obtain $\bar{y}\in L^2(I;H^1(\Omega))\cap H^{\frac{1}{2}}(I;L^2(\Omega))$. That is, $\bar{y}\in L^2(I;H^{2s-1}(\Omega))\cap H^{s-\frac{1}{2}}(I;L^2(\Omega))$ for $\frac{1}{2}\le s<\frac{3}{4}$. Then, it follows $\bar{z}\in L^2(I;H^{2s+1}(\Omega)\cap H^1_0(\Omega))\cap H^{s+\frac{1}{2}}(I;L^2(\Omega))$ from Lemma \ref{back_regularity} and the fact $\bar{y}-y_d\in L^2(I;H^{2s-1}$ $(\Omega))\cap H^{s-\frac{1}{2}}(I;L^2(\Omega))$ for $y_d\in L^2(I;H^{2s^\prime-\frac{1}{2}}(\Omega))\cap H^{s^\prime-\frac{1}{4}}(I;L^2(\Omega))\,\left(s-\frac{1}{4}\le s^\prime\right)$, which implies that $\bar{u}\in L^2(I;H^{2s-\frac{1}{2}}(\Gamma))\cap H^{s-\frac{1}{4}}(I;L^2(\Gamma))$ by the identity (\ref{optimal_control}) and the trace theorem (cf. \cite{KKunischandBVexler2007}). Further, observing that $y_0\in H^{2s-1}(\Omega)$ and $ f\in L^2(I;L^2(\Omega))$, then applying Lemma \ref{state_equation_regularity} to the state equation (\ref{BC_stateE}) implies that  $\bar{y}\in L^2(I;H^{2s}(\Omega))\cap H^s(I;L^2(\Omega))$. Then, one derives $\bar{z}\in L^2(I;H^{2s^\prime+\frac{3}{2}}(\Omega)\cap H^1_0(\Omega))\cap H^{s^\prime+\frac{3}{4}}(I;L^2(\Omega))$ from Lemma \ref{back_regularity} and the fact $\bar{y}-y_d\in L^2(I;H^{2s^\prime-\frac{1}{2}}(\Omega))\cap H^{s^\prime-\frac{1}{4}}(I;L^2(\Omega))$ for $y_d\in L^2(I;H^{2s^\prime-\frac{1}{2}}(\Omega))\cap H^{s^\prime-\frac{1}{4}}(I;L^2(\Omega))$ $(s^\prime\le s)$, which implies that $\bar{u}\in L^2(I;H^{{\rm min}(2s^\prime,1)}(\Gamma))\cap H^{s^\prime}(I;L^2(\Gamma))$ by \eqref{optimal_control}. At last, $\partial_t\bar{z}\in L^2(I;H^{2(s^\prime-\frac{1}{4})}(\Omega))\cap H^{s^\prime-\frac{1}{4}}(I;L^2(\Omega))$ can be derived from \cite[Page 12]{J.L1}. This completes the proof.
	\end{proof}

	
The regularity of solutions for the state and adjoint equations depends heavily on the smoothness of domains. From the above theorem, we see that the solution of optimal control problems in the smooth domain possesses higher regularity if the data is smooth, but this is not the case for the polytopal domain. The following theorem from \cite{WM-Hinze,WGongandBYLi} shows the regularity of the optimal control problem in polytopes.
	\begin{Theorem}\label{regularity_cont_cnvex}
		Let $\Omega$ be a convex polytope and $(\bar{u},\bar{y},\bar{z})$ be the solution of the optimal control problem \eqref{BC_Functioal}. Then for any given $y_d,\,f\in L^2(I;L^2(\Omega))$ and $y_0\in L^2(\Omega)$ there hold
		\begin{align*}
			\bar{y}\in L^2(I;H^1(\Omega))\cap H^{\frac{1}{2}}(I;L^2(\Omega)),\quad
			\bar{u}\in L^2(I;H^{\frac{1}{2}}(\Gamma))\cap H^{\frac{1}{4}}(I;L^2(\Gamma)),\\
			\bar{z}\in L^2(I;H^2(\Omega)\cap H^1_0(\Omega))\cap H^1(I;L^2(\Omega)).
		\end{align*}
	\end{Theorem}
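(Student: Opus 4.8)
The plan is to follow the same bootstrapping strategy as in the proof of Theorem \ref{optimall_regularity}, but now relying on the polytopal regularity theory for the backward heat equation instead of Lemma \ref{back_regularity}. First I would record the base case: since $\bar u\in L^2(I;L^2(\Gamma))$, $f\in L^2(I;L^2(\Omega))$ and $y_0\in L^2(\Omega)$, Lemma \ref{T_Solution_E} gives $\bar y\in L^2(I;L^2(\Omega))$, hence $\bar y-y_d\in L^2(I;L^2(\Omega))$. Feeding this right-hand side into the adjoint equation \eqref{Conjugate_EQ} and invoking the convex-polytope regularity for \eqref{T_back_eq} (stated just before Lemma \ref{back_regularity}: for any $g\in L^2(I;L^2(\Omega))$ the solution lies in $H^1(I;L^2(\Omega))\cap L^2(I;H^1_0(\Omega)\cap H^2(\Omega))$), we obtain $\bar z\in L^2(I;H^2(\Omega)\cap H^1_0(\Omega))\cap H^1(I;L^2(\Omega))$, which is the claimed adjoint regularity. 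By the trace theorem $\partial_n\bar z\in L^2(I;H^{1/2}(\Gamma))$, and since $\bar z\in H^1(I;L^2(\Omega))$ with $\partial_n$ mapping into traces one also gets the temporal smoothness $\partial_n\bar z\in H^{1/4}(I;L^2(\Gamma))$; this last point is exactly the trace estimate cited in \cite{KKunischandBVexler2007}.

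Next I would use the optimality identity \eqref{optimal_control}, $\bar u=P_{U_{ad}}(\tfrac1\alpha\partial_n\bar z)$, together with the fact that $P_{U_{ad}}$ preserves $H^s$-regularity for $s\le 1$, to transfer the adjoint regularity to the control: $\bar u\in L^2(I;H^{1/2}(\Gamma))\cap H^{1/4}(I;L^2(\Gamma))$. Finally, armed with this improved control regularity $\bar u\in L^2(I;H^{1/2}(\Gamma))\cap H^{1/4}(I;L^2(\Gamma))$ (which is precisely the data regularity $u\in L^2(I;H^{2s-1/2}(\Gamma))\cap H^{s-1/4}(I;L^2(\Gamma))$ with $s=\tfrac12$) and $y_0\in L^2(\Omega)=H^{2s-1}(\Omega)$, $f\in L^2(I;L^2(\Omega))$, I would apply the state regularity result with $s=\tfrac12$. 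Here one must use the polytopal analogue of Lemma \ref{state_equation_regularity}; since $\Omega$ is a \emph{convex} polytope the $H^{2s}$-elliptic regularity with $2s=1$ holds (convexity guarantees $H^2$-regularity for the stationary problem, hence $H^1$ for this fractional range), yielding $\bar y\in L^2(I;H^1(\Omega))\cap H^{1/2}(I;L^2(\Omega))$. This closes the loop and gives all three asserted regularities.

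The main subtlety — the step I expect to need the most care — is justifying the use of Lemma \ref{state_equation_regularity} (or its polytopal counterpart) on a convex polytope rather than a smooth domain: that lemma is stated for smooth $\Omega$, so one must either cite the corresponding convex-polytope parabolic regularity from \cite{WM-Hinze,WGongandBYLi} or argue it directly from convex-domain elliptic $H^2$-regularity combined with interpolation and the maximal-regularity/interpolation machinery for the heat semigroup, being careful that the fractional orders involved ($2s=1$, $s=\tfrac12$) stay strictly below the threshold where corner singularities of a convex polytope would obstruct the estimate. A secondary technical point is the temporal trace estimate $\partial_n:H^1(I;L^2(\Omega))\cap L^2(I;H^2(\Omega))\to H^{1/4}(I;L^2(\Gamma))$, which again should be quoted from \cite{KKunischandBVexler2007} rather than reproved. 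Once these two ingredients are in place, the argument is a short one-pass bootstrap with no iteration needed, since on a polytope the data regularity cannot be raised further.
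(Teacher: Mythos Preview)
The paper does not actually give a proof of Theorem~\ref{regularity_cont_cnvex}; it simply quotes the result from \cite{WM-Hinze,WGongandBYLi}. Your bootstrapping argument is correct and is exactly the approach used in those references (and mirrors the paper's own proof of the smooth-domain analogue, Theorem~\ref{optimall_regularity}): start from $\bar y\in L^2(I;L^2(\Omega))$ via Lemma~\ref{T_Solution_E}, use convex-polytope $H^2$-parabolic regularity on the adjoint equation to get $\bar z\in L^2(I;H^2\cap H^1_0)\cap H^1(I;L^2)$, apply the trace theorem and the projection identity~\eqref{optimal_control} to obtain $\bar u\in L^2(I;H^{1/2}(\Gamma))\cap H^{1/4}(I;L^2(\Gamma))$, and finally feed this back into the state equation at level $s=\tfrac12$ to get $\bar y\in L^2(I;H^1(\Omega))\cap H^{1/2}(I;L^2(\Omega))$. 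Your identification of the two delicate points---that the $s=\tfrac12$ state regularity on a convex polytope must be justified separately from Lemma~\ref{state_equation_regularity}, and that the temporal trace estimate $\partial_n\bar z\in H^{1/4}(I;L^2(\Gamma))$ needs a specific citation---is exactly right; both are handled in \cite{WM-Hinze,WGongandBYLi,KKunischandBVexler2007}.
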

	\begin{Remark}
		The results of Theorems \ref{optimall_regularity} and \ref{regularity_cont_cnvex} on the regularity of the optimal control problem \eqref{BC_Functioal} play an important role in the error estimation of  discrete optimal control problems. Theorem \ref{regularity_cont_cnvex} implies that the discrete control can achieve optimal  convergence order by using the DG(0)-CG(1) discretization scheme. 
	\end{Remark}

	\section{Finite element discretization}
	\setcounter{equation}{0}
	\subsection{Notations for finite element methods}
	In order to introduce the DG(0) semi-discretization in time and the DG(0)-CG(1) fully discretization in time-space of the state and adjoint equations, we first give some notations for finite element methods.

	    To begin with, we divide the interval $\bar{I}:=[0,T]$ into a family of subintervals ${I_m}:=(t_{m-1},t_m]$ with step sizes $k_m:=t_m-t_{m-1},\,m=1,\cdots,M,$ where $0=t_0<t_1<\cdots<t_M=T$. The maximal time step size is denoted by $k:=\max \limits_{1\le m\le M}k_m$.

    Throughout this article, we assume the following two conditions for $k_m$ and $k$:
    \begin{itemize}
    	\setlength{\itemsep}{1.5pt}
    	\setlength{\parsep}{1.2pt}
    	\setlength{\parskip}{1.2pt}
    	\item[(i)] There exists a constant $C>0$ independent of $k$ and $m$ such that
    	\begin{equation}\label{condition1}
    		\setlength{\abovedisplayskip}{8pt}
    		\setlength{\belowdisplayskip}{8pt}
    		k/k_m\le C,\quad m=1,\cdots,M.
    	\end{equation}
    	\item[(ii)] The time step  $k_m$ is non-increasing for $m$, i.e.,
    	\begin{equation}\label{condition2}
    		\setlength{\abovedisplayskip}{6pt}
    		\setlength{\belowdisplayskip}{6pt}
    		k_m/k_{m-1}\le 1,\quad m=2,\cdots,M.
    	\end{equation}
    \end{itemize}
The first condition ensures that the partition for $I$ is quasi-uniform, and the second one implies that the time step is non-increasing along the direction $T$.

    We define the temporal semi-discrete DG(0) space consisting of piecewise constants in time as follows:
    \begin{equation}
	\begin{split}
		X^0_k:=&\big\{v_k\in L^2(I;H^1_0(\Omega)):\ v_k|_{I_m}\in P_0(I_m;H^1_0(\Omega)),\ m=1,\cdots,M\big\},\\ \nonumber
		X_k:=&\big\{v_k\in L^2(I;H^1(\Omega)):\, v_k|_{I_m}\in P_0(I_m;H^1(\Omega)),\ m=1,\cdots,M\big\},\\
		\tilde{X}_k:=&\big\{v_k\in L^2(I;L^2(\Omega)):\, v_k|_{I_m}\in P_0(I_m;L^2(\Omega)),\ m=1,\cdots,M\big\},
	\end{split}
   \end{equation}
   where $P_0(I_m;H^1_0(\Omega))$, $P_0(I_m;H^1(\Omega))$ and $P_0(I_m;L^2(\Omega))$ denote function spaces of constants defined in $I_m$ $(m=1,\cdots,M)$ and valued in spaces $H^1_0(\Omega), H^1(\Omega)$ and $L^2(\Omega)$, respectively.

   {In addition, we   introduce the following two temporal semi-discrete finite element spaces consisting of  functions of piecewise constants in time:
	\begin{equation}
		\begin{split}
			X_k(\Gamma):&=\big\{v_k\in L^2(I;H^{\frac{1}{2}}(\Gamma)):\,\exists\,\tilde{v}_k\in X_k,\,\mbox{such that}\,v_k|_{I_m}=\tilde{v}_k|_{I_m\times\Gamma},\,m=1,\cdots,M\big\}, \\ \nonumber
			\tilde{X}_k(\Gamma):&=\big\{v_k\in L^2(I;L^2(\Gamma)):\,v_k|_{I_m}\in P_0(I_m;L^2(\Gamma)),\,m=1,\cdots,M\big\}.
		\end{split}
	\end{equation}
	
	When $\Omega$ is a polytope, we introduce a family of quasi-uniform and shape regular partitions $\mathscr{T}_h=\{\tau\}$ in the sense of Ciarlet \cite{Ciarlet}, where $\tau$ is the $N$-simplex with diameter $h_\tau$. We denote the mesh size of $\mathscr{T}_h$ by $h:=\max \limits_{\tau\in\mathscr{T}_h}h_\tau$. Define the $P_1$ finite element space
	\begin{equation}
		V_h:=\big\{v_h\in C(\bar{\Omega}):\,v_h|_\tau\in P_1(\tau),\ \forall\tau\in\mathscr{T}_h\big \},
	\end{equation}
where $P_1(\tau)$ denotes the space of linear functions in $\tau$. We set $V^0_h:=V_h\cap H^1_0(\Omega)$ and
		\begin{equation}
			V_h(\Gamma):=\big\{v_h|_\Gamma:\,v_h\in V_h\big\}.\nonumber
	\end{equation}

	Let $\pi_h:C(\bar\Omega)\rightarrow V_h$ be the Lagrange interpolation operator (cf. \cite{BrennerScott2008}) or the Cl\'ement interpolation operator from $L^1(\Omega)$ to $V_h$ (\cite{Clement}). Let $P_h:L^2(\Omega)\to V_h$ be the $L^2$-projection operator defined as follows: For any $y\in L^2(\Omega),\,P_hy$ satisfies
	\begin{equation}
		(P_h y,v_h)=(y,v_h)\qquad \forall v_h\in V_h.\nonumber
	\end{equation}
	Assume that $R_h:H^1_0(\Omega)\rightarrow V^0_h$ is the Ritz projection operator, i.e., for any $\varphi\in H^1_0(\Omega)$, $R_h\varphi\in V_h^0$ satisfies
	\begin{equation}
		(\nabla R_h \varphi,\nabla v_h)=(\nabla\varphi,\nabla v_h)\qquad\forall v_h\in V^0_h.\nonumber
	\end{equation}

	The following lemma  gives  the inverse estimate and error estimates  for the Ritz projection (cf. \cite{BrennerScott2008}), which will be frequently used in this article.
	\begin{Lemma}\label{RitePE}
		Let $R_h$ be the Ritz projection operator, then the following estimates hold:
		\begin{align*}
			&\| v- R_hv\|_{H^1(\Omega)}\le Ch\| \nabla^2v\|_{L^2(\Omega)}&&\forall v\in H^1_0(\Omega)\cap H^2(\Omega),\\
			&\| v- R_hv\|_{L^2(\Omega)} \le Ch\| \nabla(v- R_hv)\|_{L^2(\Omega)} &&\forall v\in H^1_0(\Omega).
		\end{align*}
	\end{Lemma}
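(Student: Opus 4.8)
The statement to prove is Lemma~\ref{RitePE}, which collects the standard inverse estimate and error estimates for the Ritz projection $R_h\colon H^1_0(\Omega)\to V^0_h$. Since these are textbook results (cf.~\cite{BrennerScott2008}), the plan is to reduce them to the approximation properties of $V_h$ together with C\'ea-type best-approximation arguments, rather than to reprove the underlying finite element theory from scratch.

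\textbf{Plan.} First I would recall that $R_h$ is the $H^1_0$-orthogonal projection onto $V_h^0$ with respect to the Dirichlet inner product $(\nabla\cdot,\nabla\cdot)$, so that by Galerkin orthogonality and the Poincar\'e inequality (which makes $\|\nabla\cdot\|_{L^2(\Omega)}$ a norm on $H^1_0(\Omega)$) one has the quasi-optimality
\[
\|\nabla(v-R_hv)\|_{L^2(\Omega)}\le \inf_{v_h\in V_h^0}\|\nabla(v-v_h)\|_{L^2(\Omega)}\qquad\forall v\in H^1_0(\Omega).
\]
For the first estimate, I would take $v\in H^1_0(\Omega)\cap H^2(\Omega)$ and choose $v_h=\pi_h v\in V_h^0$ (the Lagrange or Cl\'ement interpolant; note $\pi_h v\in H^1_0(\Omega)$ since $v$ vanishes on $\Gamma$ and the mesh is conforming). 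The standard interpolation error estimate on the quasi-uniform, shape-regular family $\mathscr{T}_h$ gives $\|\nabla(v-\pi_h v)\|_{L^2(\Omega)}\le Ch\|\nabla^2 v\|_{L^2(\Omega)}$, and combining with quasi-optimality yields $\|v-R_hv\|_{H^1(\Omega)}\le Ch\|\nabla^2 v\|_{L^2(\Omega)}$ after absorbing the $L^2$ part of the $H^1$ norm via Poincar\'e.

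\textbf{Second estimate (the Nitsche/Aubin--Nitsche duality step).} For $v\in H^1_0(\Omega)$ I would bound $\|v-R_hv\|_{L^2(\Omega)}$ by a duality argument: let $w\in H^1_0(\Omega)$ solve $-\Delta w=v-R_hv$ in $\Omega$, $w=0$ on $\Gamma$; since $\Omega$ is convex, elliptic regularity gives $w\in H^2(\Omega)$ with $\|\nabla^2 w\|_{L^2(\Omega)}\le C\|v-R_hv\|_{L^2(\Omega)}$. Then
\[
\|v-R_hv\|_{L^2(\Omega)}^2=(\nabla(v-R_hv),\nabla w)=(\nabla(v-R_hv),\nabla(w-R_hw))\le \|\nabla(v-R_hv)\|_{L^2(\Omega)}\,\|\nabla(w-R_hw)\|_{L^2(\Omega)},
\]
using Galerkin orthogonality in the second equality. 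Applying the already-proved first estimate to $w$ gives $\|\nabla(w-R_hw)\|_{L^2(\Omega)}\le Ch\|\nabla^2 w\|_{L^2(\Omega)}\le Ch\|v-R_hv\|_{L^2(\Omega)}$, and dividing through by $\|v-R_hv\|_{L^2(\Omega)}$ yields $\|v-R_hv\|_{L^2(\Omega)}\le Ch\|\nabla(v-R_hv)\|_{L^2(\Omega)}$, which is precisely the claimed bound. (The degenerate case $v=R_hv$ is trivial.)

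\textbf{Main obstacle.} There is no deep difficulty here; the only point requiring care is that the duality argument for the $L^2$ estimate implicitly uses $H^2$ elliptic regularity for the Dirichlet Laplacian, which holds because $\Omega$ is assumed convex (polytopal in the relevant part of the paper). I would make sure to invoke convexity explicitly at that step. Everything else is a matter of citing the standard interpolation and projection theory from \cite{BrennerScott2008} on the quasi-uniform shape-regular family $\mathscr{T}_h$; since the lemma is stated merely to fix notation and collect tools, a short proof along these lines, or a direct reference, suffices.
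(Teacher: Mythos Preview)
Your proposal is correct and follows exactly the standard textbook route (C\'ea quasi-optimality plus interpolation for the $H^1$ estimate, Aubin--Nitsche duality exploiting convexity for the $L^2$ bound). The paper itself gives no proof of this lemma at all: it is stated purely as a tool and attributed directly to \cite{BrennerScott2008}, so there is nothing further to compare against.
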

	\begin{Remark}
		Although the operators $R_h$, $P_h$ and $\pi_h$ are defined in the spaces of functions independent of time, it is  possible to extend their definitions to the time-dependent case that have to be understood pointwise in time. Then Lemma \ref{RitePE} is also valid for the time-dependent case with corresponding estimates under time-space norms.
	\end{Remark}
	
	In order to define the DG(0)-CG(1) discrete scheme for parabolic equations, we introduce the following time-space finite element spaces:
		\begin{equation}
			\begin{split}
				X^{0}_{kh}:=&\big\{v_{kh}\in X^0_k:\ v_{kh}|_{I_m}\in P_0(I_m;\,V^0_h),\ m=1,\cdots,M\big\},\\\nonumber
				X_{kh}:=&\big\{v_{kh}\in X_k:\ v_{kh}|_{I_m}\in P_0(I_m;V_h),\ m=1,\cdots,M\big\},\nonumber
			\end{split}
		\end{equation}
		where $P_0(I_m;V_h)$ $(m=1,\cdots,M)$ denotes the function space of constants defined in $I_m$ and valued in $V_h$, and the definition of $P_0(I_m;V^0_h)$ is similar. In addition, we define
		\begin{equation}
			X_{kh}(\Gamma):=\big\{v_{kh}\in X_k(\Gamma):\ v_{kh}|_{I_m}\in P_0(I_m;V_h(\Gamma)),\,m=1,\cdots,M\big\},\nonumber
		\end{equation}
		where $P_0(I_m;V_h(\Gamma))$ is the function space of constants defined in $I_m$ and valued in $V_h(\Gamma)$.
	
	For the definition of the temporal DG(0) scheme for parabolic equations, we need the following notations: for any $v_k\in X_k$,
		\begin{align*}
			&v_{k,m}=v^-_{k,m}:=\lim_{t\to 0^+}v_k(t_m-t),\quad v_{k,m+1}=v^+_{k,m}:=\lim_{t\to 0^+}v_k(t_m+t),\\
			& [v_k]_m:=v^+_{k,m}-v^-_{k,m},\quad m=1,\cdots,M,\nonumber
		\end{align*}
		where $v_{k,m}:=v_k|_{I_m}$ and $[v_k]_m$ denotes the jump of $v_k$ at nodes $t_m$.
		
		Given two piecewise-in-time $H^1$-functions  $v,\,w\in L^2(I;H^1(\Omega))$, we define the bilinear form $B:(v,w)\to \mathbb{R}$ as follows:
		\begin{equation}\label{bilinearF}
			B(v,w):=\sum_{m=1}^M( \partial_tv,w )_{I_m}+(\nabla v,\nabla w)_I+\sum_{m=2}^M([v]_{m-1},w^+_{m-1})+(v^+_0,w^+_0),
		\end{equation}
		where $(\cdot,\cdot)_{I_m}$ denotes the inner product in space $L^2(I_m;L^2(\Omega))$ (cf.      \cite{MeidnerVexler}). Applying  integration by parts to the above bilinear form $B$, we obtain the following dual representation:
		\begin{equation}\label{dbilinearF}
			B(v,w)=-\sum_{m=1}^M( v,\partial_tw )_{I_m}+(\nabla v,\nabla w)_I-\sum_{m=1}^{M-1}(v^-_m,[w]_m)+(v^-_M,w^-_M).
		\end{equation}
Note that if $v,w\in X_k$, then the first term in $B$ vanishes. Similarly, if $v,w$ are continuous functions in time, then the terms of $B$ containing jumps also vanish.
	
In the following two subsections, we consider the discretization of the optimal control problem (\ref{BC_Functioal}), where the state variable is approximated by finite elements, and the control variable is discretized by the variational discretization (cf. \cite{Michael}). In subsection 4.2, we apply the DG(0) method for the temporal semi-discretization of the state variable for problems posed on   bounded polytopal/smooth  domains. In subsection 4.3, we use the DG(0)-CG(1) method, i.e. piecewise constant functions in time and continuous piecewise linear polynomials in space, to discretize the state variable for problems posed on polytopal domains.

\subsection{Discretization in time}

In order to give the temporal semi-discrete scheme of the parabolic equation (\ref{BC_stateE}), we define the boundary $L^2$-projection operator in time $\tilde{P}_k:L^2(I;L^2(\Gamma))\to \tilde{X}_k(\Gamma)$, such that for any $w\in L^2(I;L^2(\Gamma))$, $\tilde{P}_kw\in \tilde{X}_k(\Gamma)$ satisfies
\begin{equation}\label{pro_boundary_condition}
	\tilde{P}_kw|_{I_m}:=\frac{1}{k_m}\int_{t_{m-1}}^{t_m}w(s)ds,\qquad m=1,\cdots,M.
\end{equation}
For simplicity, we set $\tilde{P}^m_ku:=\tilde{P}_ku|_{I_m}$.

The semi-discrete parabolic Dirichlet boundary control problem is given by
\begin{equation}\label{time_semi_control}
	\mathop{\rm min}\limits_{u\in U_{ad},\ y_k(u)\in X_k}J_k(y_k(u),u)=\frac{1}{2}\Vert y_k(u)-y_d\Vert^2_I+\frac{\alpha}{2}\Vert u\Vert^2_{L^2(I;L^2(\Gamma))},
\end{equation}
where $y_k(u)$ is the semi-discrete state variable satisfying the following equation:
\begin{equation}\label{time_semi_state_e}
		B(y_k(u),\varphi_k)=(f,\varphi_k)_I+(y_0,\varphi^+_{k,0})\qquad\forall\varphi_k\in X^0_k,\quad	y_k(u)|_{I\times\Gamma}=\tilde{P}_ku.
\end{equation}
Note that in the   discrete problem \eqref{time_semi_control} the control variable is not explicitly discretized, which is the so-called variational discretization proposed in \cite{Michael} for optimal control problems.

For any $u\in U_{ad}$, the semi-discrete scheme (\ref{time_semi_state_e}) admits a unique solution $y_k(u)\in X_k$. The control to discrete state mapping $S_k:L^2(I;L^2(\Gamma))\to X_k$ is defined by $S_ku:=y_k(u)$ for any $u\in L^2(I;L^2(\Gamma))$ and is Frech\'et differentiable. Then we obtain the  reduced optimization problem
\begin{equation}\label{time_semi_optimal}
	\mathop{\rm min}\limits_{u\in U_{ad}}\hat{J}_k(u):=J(S_ku,u).
\end{equation}
By standard arguments, we can prove  that the semi-discrete optimal control problem \eqref{time_semi_optimal} admits a unique solution $\bar{u}_k$ (cf. \cite[Page 53]{HinzePinnauUlbrich}).
Moreover, the reduced cost functional $\hat{J}_k$ is also Frech\'et differentiable and
\begin{equation}\label{semi-cost_derivative}
	\hat{J}^{'}_k(u)v=\int_{\Omega_T}(y_k(u)-y_d)\tilde{y}_k(v)dxdt+\alpha\int_{\Sigma_T}uvdsdt\qquad\forall v\in L^2(I;L^2(\Gamma)),
\end{equation}
where $\tilde{y}_k(v)\in X_k$ is the solution of the following semi-discrete problem:
\begin{equation}\label{time_semi_state_ee}
		B(\tilde{y}_k(v),\varphi_k)=0\qquad\forall\varphi_k\in X^0_k,\quad
		\tilde{y}_k(v)|_{I\times\Gamma}=\tilde{P}_kv.
\end{equation}

In the following theorem, we give the first order optimality condition for  the semi-discrete optimal control problem \eqref{time_semi_optimal}.
\begin{Theorem}\label{time_discrete_NC}
The pair $(\bar{u}_k,\bar{y}_k)$  is the optimal solution of the semi-discrete  control problem \eqref{time_semi_control} if and only if $\bar{y}_k:=S_k\bar{u}_k=y_k(\bar{u}_k)$ and the following first order optimality condition holds:
	\begin{equation}\label{semi_discrete_VI}
		\hat{J}^{'}_k(\bar{u}_k)(v-\bar{u}_k)\ge0\qquad \forall v\in U_{ad}.
	\end{equation}
Furthermore, there exists a semi-discrete adjoint state variable $\bar{z}_k\in X_k^0$ defined by
	\begin{equation}\label{time_discrete_adjoint}
		B(\varphi_k,\bar{z}_k)=(\bar{y}_k-y_d,\varphi_k)_I\qquad \forall\varphi_k\in X_k^0,
	\end{equation}
such that \eqref{semi_discrete_VI} can be equivalently written as
	\begin{equation}\label{semi_discrete_VI_1}
		\hat{J}^{'}_k(\bar{u}_k)(v-\bar{u}_k)=\int_{\Sigma_T}(\alpha \bar{u}_k-\partial_n\bar{z}_k)(v-\bar{u}_k)dsdt\ge0\qquad \forall v\in U_{ad},
	\end{equation}
	where $\partial_n\bar{z}_k\in X_k(\Gamma)$ is the outward normal derivative of $\bar{z}_k$  defined by
	\begin{equation}\label{semi_discrete_norm_deriv}
		\int_{\Sigma_T}\partial_n\bar{z}_k\phi_kdsdt=-\int_{\Omega_T}(\bar{y}_k-y_d)p_k(\phi_k)dxdt\qquad \forall\phi_k\in X_k(\Gamma),
	\end{equation}
	and $p_k(\phi_k)\in X_k$ is the solution of the following semi-discrete scheme:
	\begin{equation}\label{semi_discrete_VI_2}
			B(p_k(\phi_k),\varphi_k)=0\qquad \forall\varphi_k\in X^0_k,\quad
			p_k(\phi_k)|_{I\times\Gamma}=\phi_k.
	\end{equation}
\end{Theorem}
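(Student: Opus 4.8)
The plan is to mimic the standard variational-inequality argument for optimal control problems with a convex smooth reduced cost functional, adapted to the semi-discrete setting. First I would establish the equivalence of optimality with the variational inequality \eqref{semi_discrete_VI}: since $\hat J_k(u)=J(S_ku,u)=\frac12\|S_ku-y_d\|_I^2+\frac\alpha2\|u\|_{L^2(I;L^2(\Gamma))}^2$ is the sum of a convex quadratic in the affine-linear argument $S_ku$ (recall $S_k$ is affine, as $y_k(u)$ solves a linear problem with inhomogeneous data) and a strictly convex quadratic in $u$, the functional $\hat J_k$ is strictly convex and Fr\'echet differentiable on the closed convex set $U_{ad}$. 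Hence $\bar u_k$ minimizes $\hat J_k$ over $U_{ad}$ if and only if $\hat J_k'(\bar u_k)(v-\bar u_k)\ge 0$ for all $v\in U_{ad}$; this is the classical first-order condition, and no further work is needed beyond citing convexity and differentiability as already recorded after \eqref{time_semi_optimal}.

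The substantive step is to rewrite $\hat J_k'(\bar u_k)v$ in terms of the discrete adjoint. Starting from \eqref{semi-cost_derivative} with $u=\bar u_k$, I would analyze the term $\int_{\Omega_T}(\bar y_k-y_d)\tilde y_k(v)\,dxdt$. Here $\tilde y_k(v)\in X_k$ solves the homogeneous-source problem \eqref{time_semi_state_ee} with boundary data $\tilde P_kv$. The idea is to test the adjoint equation \eqref{time_discrete_adjoint} with $\varphi_k=\tilde y_k(v)$ — but $\tilde y_k(v)\notin X_k^0$ since it has nonzero boundary trace, so instead I would test with a suitable element of $X_k^0$, or equivalently use the dual representation \eqref{dbilinearF} of $B$ together with integration by parts in space to pick up a boundary term. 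Concretely, writing $(\bar y_k-y_d,\tilde y_k(v))_I = B(\tilde y_k(v),\bar z_k)$ would be the goal if $\tilde y_k(v)$ were admissible; the boundary correction is precisely $\int_{\Sigma_T}\partial_n\bar z_k\cdot\tilde P_kv\,dsdt$ once we use that $-\Delta\bar z_k$ acting against $\tilde y_k(v)$ produces, via Green's formula applied on each $I_m$ on the spatial domain, the normal-derivative pairing on $\Gamma$; the interior contributions cancel because $\tilde y_k(v)$ satisfies the discrete equation with zero source. This is where the definition \eqref{semi_discrete_norm_deriv} of $\partial_n\bar z_k$ enters: it is exactly designed so that $\int_{\Sigma_T}\partial_n\bar z_k\,\phi_k\,dsdt=-\int_{\Omega_T}(\bar y_k-y_d)p_k(\phi_k)\,dxdt$, and since $p_k(\tilde P_kv)=\tilde y_k(v)$ by uniqueness of solutions to \eqref{semi_discrete_VI_2} and \eqref{time_semi_state_ee}, we get $\int_{\Omega_T}(\bar y_k-y_d)\tilde y_k(v)\,dxdt = -\int_{\Sigma_T}\partial_n\bar z_k\cdot\tilde P_kv\,dsdt$. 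Finally, using that $\tilde P_k$ is the $L^2$-in-time projection onto piecewise constants and that $\partial_n\bar z_k\in X_k(\Gamma)$ is piecewise constant in time, self-adjointness of $\tilde P_k$ gives $\int_{\Sigma_T}\partial_n\bar z_k\cdot\tilde P_kv\,dsdt=\int_{\Sigma_T}\partial_n\bar z_k\cdot v\,dsdt$ for all $v$; combined with $\alpha\int_{\Sigma_T}\bar u_k v\,dsdt$ this yields $\hat J_k'(\bar u_k)v=\int_{\Sigma_T}(\alpha\bar u_k-\partial_n\bar z_k)v\,dsdt$, and subtracting the identity at $v=\bar u_k$ gives \eqref{semi_discrete_VI_1}.

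The main obstacle I anticipate is the careful justification of the Green's-formula manipulation at the discrete level: the adjoint state $\bar z_k\in X_k^0$ is only piecewise constant in time, so its distributional time derivative involves jumps, and the bilinear form $B$ bundles these jumps with the spatial Laplacian term; one must verify that testing \eqref{time_discrete_adjoint} against the non-admissible function $\tilde y_k(v)$ (after splitting it as a discrete-harmonic extension of its boundary data plus an $X_k^0$ part) produces no spurious jump contributions and that the only surviving boundary term is the normal-derivative pairing. This is precisely the point where the consistency of the definitions \eqref{time_discrete_adjoint}, \eqref{semi_discrete_norm_deriv}, \eqref{semi_discrete_VI_2} with the state equation \eqref{time_semi_state_e}, \eqref{time_semi_state_ee} must be exploited, and it is cleanest to avoid Green's formula altogether by defining $\partial_n\bar z_k$ through \eqref{semi_discrete_norm_deriv} and simply invoking uniqueness $p_k(\tilde P_kv)=\tilde y_k(v)$, as sketched above — reducing the whole argument to the projection identity and the defining relations, with no delicate integration by parts required.
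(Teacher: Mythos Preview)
Your proposal is correct and follows essentially the same route as the paper's proof: after dispatching the variational inequality via strict convexity, you identify $p_k(\tilde P_k v)=\tilde y_k(v)$ by uniqueness, invoke the defining relation \eqref{semi_discrete_norm_deriv} to convert the volume integral into the boundary pairing, and then drop $\tilde P_k$ using that $\partial_n\bar z_k\in X_k(\Gamma)$ is piecewise constant in time. The paper works directly with the increment $v-\bar u_k$ (noting $y_k(v)-\bar y_k=p_k(\tilde P_k(v-\bar u_k))$ by superposition) rather than computing $\hat J_k'(\bar u_k)v$ and then subtracting, and it omits your exploratory discussion of Green's formula; but the substantive argument is identical, and your own conclusion that ``it is cleanest to avoid Green's formula altogether'' is exactly the paper's choice.
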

\begin{proof}
	Since the optimization problem (\ref{time_semi_control}) is strictly convex, the first order optimality condition  (\ref{semi_discrete_VI}) is a direct consequence of calculus of variations. Therefore, we only need to verify \eqref{semi_discrete_VI_1}. For any given $v\in U_{ad}$, let $\phi_k:=\tilde{P}_k(v-\bar{u}_k)$ in \eqref{semi_discrete_VI_2}, the superposition principle of linear equations implies that $p_k(\tilde{P}_k(v-\bar{u}_k))=y_k(v)-\bar{y}_k$ satisfies \eqref{semi_discrete_VI_2}. Then it follows from  (\ref{semi_discrete_norm_deriv}) that
	\begin{align*}
		\hat{J}^{'}_k(\bar{u}_k)(v-\bar{u}_k)&=\int_{\Omega_T}(\bar{y}_k-y_d)(y_k(v)-\bar{y}_k)dxdt+\alpha\int_{\Sigma_T}\bar{u}_k(v-\bar{u}_k)dsdt\\
		&=\int_{\Omega_T}(\bar{y}_k-y_d)p_k(\tilde{P}_k(v-\bar{u}_k))dxdt+\alpha\int_{\Sigma_T}\bar{u}_k(v-\bar{u}_k)dsdt\\
		&=-\int_{\Sigma_T}\partial_n\bar{z}_k\tilde{P}_k(v-\bar{u}_k)dsdt+\alpha\int_{\Sigma_T}\bar{u}_k(v-\bar{u}_k)dsdt\\
		&=-\int_{\Sigma_T}\partial_n\bar{z}_k(v-\bar{u}_k)dsdt+\alpha\int_{\Sigma_T}\bar{u}_k(v-\bar{u}_k)dsdt\\
		&=\int_{\Sigma_T}(\alpha \bar{u}_k-\partial_n\bar{z}_k)(v-\bar{u}_k)dsdt	\ge 0.
	\end{align*}
	This completes the proof.
\end{proof}
\begin{Remark}
In fact, one can easily check that the definition of the outward normal derivative in \eqref{semi_discrete_norm_deriv}   is equivalent to the usual one.
\end{Remark}

\subsection{Discretization in space}
When $\Omega$ is a polytope, we can further consider the time-space discretization of the optimal control problem \eqref{BC_Functioal} by using the DG(0)-CG(1) method. To this end, we define a family of $L^2$-projections.
	\begin{Definition}\label{defn_pprojection}
		Let $P_h:L^2(I;L^2(\Omega))\to L^2(I;V_h)$ and $P_{kh}:L^2(I;L^2(\Omega))\to X_{kh}$ be two orthogonal projection operators such that for any $w\in L^2(I;L^2(\Omega))$, $P_hw$ and $P_{kh}w$ satisfy
		\begin{align*}
			&(w-P_hw,v_h)_{L^2(I;L^2(\Omega))}=0\qquad\forall v_h\in L^2(I;V_h), \\
			&(w-P_{kh}w,v_{kh})_{L^2(I;L^2(\Omega))}=0\qquad\forall v_{kh}\in X_{kh},
		\end{align*}
		respectively, where $L^2(I;V_h)$ denotes the  space of functions defined in $I$ and valued in $V_h$.
		
		Similarly, we can define another two projections $\tilde{P}_h:L^2(I;L^2(\Gamma))\to L^2(I;V_h(\Gamma))$ and $\,\tilde{P}_{kh}:L^2(I;L^2(\Gamma))\to X_{kh}(\Gamma)$ with  $L^2(I;V_h(\Gamma))$ and $X_{kh}(\Gamma)$  playing the role of $L^2(I;V_h)$ and $X_{kh}$, respectively.
\end{Definition}

The time-space discretization of the optimal control problem \eqref{BC_Functioal} is given by
\begin{equation}\label{space_semi_control}
	\mathop{\rm min}\limits_{u\in U_{ad},\ y_{kh}(u)\in X_{kh}} J_{kh}(y_{kh}(u),u)=\frac{1}{2}\Vert y_{kh}(u)-y_d\Vert^2_I+\frac{\alpha}{2}\Vert u\Vert^2_{L^2(I;L^2(\Gamma))},
\end{equation}
where $y_{kh}(u)\in X_{kh}$ is the discrete state variable satisfying the following discrete state equation:
\begin{equation}\label{spatial_dicrete_state}
		B(y_{kh}(u),\varphi_{kh})=(f,\varphi_{kh})_I+(y_0,\varphi^+_{kh,0}) \qquad\forall\varphi_{kh}\in X^0_{kh},\quad
		y_{kh}(u)|_{I\times\Gamma}=\tilde{P}_{kh}u.
\end{equation}
Again, the control variable is not explicitly discretized in the above discrete control problem \eqref{space_semi_control}. However, the discrete adjoint state will yield an implicit discretization of the control.

For any given $u\in U_{ad}$, we can prove that the discrete state equation (\ref{spatial_dicrete_state}) has a unique solution $y_{kh}(u)\in X_{kh}$. Then we   define a discrete control-to-state linear operator $S_{kh}:L^2(I;L^2(\Gamma))\to X_{kh}$ by $S_{kh}u:=y_{kh}(u)$ for any $u\in L^2(I;L^2(\Gamma))$. We are led to the following reduced optimization problem:
 \begin{equation}\label{reduecd_optimal_problem}
 	\mathop{\rm min}\limits_{u\in U_{ad}}\hat{J}_{kh}(u):=J(S_{kh}u,u).
 \end{equation}
It is easy to check that the above discrete optimization problem has a unique solution, denoted by $\bar{u}_{kh}$. The first order Frech\'et derivative of $J_{kh}$ at $u\in L^2(I;L^2(\Gamma))$ can be calculated as
\begin{equation}
	\hat{J}^{'}_{kh}(u)v=\int_{\Omega_T}(y_{kh}(u)-y_d)\tilde{y}_{kh}(v)dxdt+\alpha\int_{\Sigma_T}uvdsdt\qquad\forall v\in U_{ad},\nonumber
\end{equation}
where $\tilde{y}_{kh}(v)\in X_{kh}$ is the solution of the following equation:
$$
	B(\tilde{y}_{kh}(v),\varphi_{kh})=0\quad\ \ \ \forall\varphi_{kh}\in    X_{kh}^0,\quad
	\tilde{y}_{kh}(v)|_{I\times\Gamma}=\tilde{P}_{kh}v.
$$

To simplify the Frech\'et derivative of $\hat{J}_{kh}$, we first define the discrete outward normal derivative for the DG(0)-CG(1) finite element solution of the backward parabolic equation.

\begin{Definition}\label{full_discrete_noral_div}
	For any $g\in L^2(I;L^2(\Omega))$, let $z$ be the solution of the  backward parabolic equation \eqref{T_back_eq}.
	Let $z_{kh}\in X^0_{kh}$ be the DG(0)-CG(1) finite element solution of  \eqref{T_back_eq} defined by
	\begin{equation}\label{full_discrete_z}
		B(\varphi_{kh},z_{kh})=(g,\varphi_{kh}) \qquad\forall\varphi_{kh}\in X^0_{kh}.
	\end{equation}
	Then the time-space discrete normal derivative $\partial^h_n z_{kh}\in X_{kh}(\Gamma)$ of $z_{kh}$ on $\Gamma$ is defined by
	\begin{equation}\label{full_discrete_norm_div}
		\int_{\Sigma_T}\partial^h_n z_{kh}\phi_{kh}dsdt=-\int_{\Omega_T}gp_{kh}(\phi_{kh})dxdt \qquad\forall\phi_{kh}\in X_{kh}(\Gamma),
	\end{equation}
	where $p_{kh}(\phi_{kh})\in X_{kh}$ satisfies the following equation for given $\phi_{kh}\in X_{kh}(\Gamma)$:
	\begin{equation}\label{Test_EQ}
			B(p_{kh}(\phi_{kh}),\varphi_{kh})=0 \qquad \forall\varphi_{kh}\in X^0_{kh},\quad
			p_{kh}(\phi_{kh})|_{I\times\Gamma}=\phi_{kh}.
	\end{equation}
\end{Definition}
The above definition is the fully discrete version of Definition \ref{def_T} by replacing $f,\,y_0$ with zeros. 
We also refer to \cite{WM-Hinze} for another definition of discrete normal derivatives.
\begin{Definition}\label{defn_Norm}
	Let $z$ be the solution of equation \eqref{T_back_eq} and $z_{kh}$ be the corresponding time-space fully discrete solution. The discrete function $\partial^h_n z_{kh}\in X_{kh}(\Gamma)$ is called the discrete normal derivative of $z_{kh}$ on $\Gamma$, if $\partial^h_n z_{kh}$ satisfies
	\begin{equation}\label{paper}
		\int_{\Sigma_T}\partial^h_n z_{kh}\Phi_{kh} dsdt=B(\Phi_{kh},z_{kh})-\int_{\Omega_T}g\Phi_{kh} dxdt\qquad\forall\,\Phi_{kh}\in X_{kh}.
	\end{equation}
\end{Definition}
In fact, Definitions \ref{full_discrete_noral_div} and \ref{defn_Norm} are equivalent, which is verified in the following proposition.
\begin{Proposition}
	Definitions \ref{full_discrete_noral_div} and \ref{defn_Norm} are equivalent.
\end{Proposition}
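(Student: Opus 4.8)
The plan is to show each definition implies the other by manipulating the bilinear form $B$. Both definitions produce an element $\partial^h_n z_{kh}\in X_{kh}(\Gamma)$; since $X_{kh}(\Gamma)$ is finite-dimensional and the defining identities are linear, it suffices to show that the right-hand sides agree when tested against a generating family. The key observation is that Definition \ref{defn_Norm} tests against all $\Phi_{kh}\in X_{kh}$, whereas Definition \ref{full_discrete_noral_div} tests against $\phi_{kh}\in X_{kh}(\Gamma)$; the link between the two is that every $\Phi_{kh}\in X_{kh}$ can be split, relative to the discrete state equation, into a ``harmonic'' part determined by its boundary trace and a part lying in $X^0_{kh}$.

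First I would start from Definition \ref{defn_Norm} and specialize the test function. Given $\phi_{kh}\in X_{kh}(\Gamma)$, let $p_{kh}(\phi_{kh})\in X_{kh}$ be the solution of \eqref{Test_EQ}, so that $B(p_{kh}(\phi_{kh}),\varphi_{kh})=0$ for all $\varphi_{kh}\in X^0_{kh}$ and $p_{kh}(\phi_{kh})|_{I\times\Gamma}=\phi_{kh}$. Taking $\Phi_{kh}=p_{kh}(\phi_{kh})$ in \eqref{paper} gives
\begin{equation}\label{eq:step1}
\int_{\Sigma_T}\partial^h_n z_{kh}\,\phi_{kh}\,dsdt=B(p_{kh}(\phi_{kh}),z_{kh})-\int_{\Omega_T}g\,p_{kh}(\phi_{kh})\,dxdt,
\end{equation}
where I have used that $p_{kh}(\phi_{kh})$ and $\phi_{kh}$ have the same boundary trace on $\Sigma_T$, so the left-hand side of \eqref{paper} equals $\int_{\Sigma_T}\partial^h_n z_{kh}\phi_{kh}\,dsdt$. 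Next I would eliminate the term $B(p_{kh}(\phi_{kh}),z_{kh})$: since $z_{kh}\in X^0_{kh}$, this quantity vanishes by \eqref{Test_EQ}. Hence \eqref{eq:step1} reduces to $\int_{\Sigma_T}\partial^h_n z_{kh}\phi_{kh}\,dsdt=-\int_{\Omega_T}g\,p_{kh}(\phi_{kh})\,dxdt$, which is exactly \eqref{full_discrete_norm_div}. This shows any $\partial^h_n z_{kh}$ satisfying Definition \ref{defn_Norm} also satisfies Definition \ref{full_discrete_noral_div}.

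For the converse, suppose $\partial^h_n z_{kh}\in X_{kh}(\Gamma)$ satisfies \eqref{full_discrete_norm_div}; I must recover \eqref{paper} for an arbitrary $\Phi_{kh}\in X_{kh}$. The idea is to decompose $\Phi_{kh}=p_{kh}(\Phi_{kh}|_{I\times\Gamma})+\psi_{kh}$, where $\psi_{kh}:=\Phi_{kh}-p_{kh}(\Phi_{kh}|_{I\times\Gamma})\in X^0_{kh}$ because the two pieces share the same boundary trace. For the first piece, \eqref{full_discrete_norm_div} with $\phi_{kh}=\Phi_{kh}|_{I\times\Gamma}$ gives $\int_{\Sigma_T}\partial^h_n z_{kh}\Phi_{kh}\,dsdt=-\int_{\Omega_T}g\,p_{kh}(\Phi_{kh}|_{I\times\Gamma})\,dxdt$, and one checks $B(p_{kh}(\Phi_{kh}|_{I\times\Gamma}),z_{kh})=0$ since $z_{kh}\in X^0_{kh}$ and by \eqref{Test_EQ}. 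For the second piece, \eqref{full_discrete_z} yields $B(\psi_{kh},z_{kh})=(g,\psi_{kh})$, while $\psi_{kh}|_{I\times\Gamma}=0$ so $\psi_{kh}$ contributes nothing to the boundary integral. Adding the two contributions and using $\Phi_{kh}=p_{kh}(\Phi_{kh}|_{I\times\Gamma})+\psi_{kh}$ reconstitutes \eqref{paper}. I expect the main obstacle to be purely bookkeeping: verifying that the superposition $p_{kh}(\phi_{kh})$ is well-defined (which follows from existence and uniqueness of the discrete state equation applied with zero right-hand side and prescribed boundary data) and that the decomposition of $\Phi_{kh}$ genuinely lands $\psi_{kh}$ in $X^0_{kh}$ — i.e., that the discrete trace operator and the spaces $X_{kh}$, $X^0_{kh}$, $X_{kh}(\Gamma)$ are set up so that $p_{kh}(\Phi_{kh}|_{I\times\Gamma})$ has trace exactly $\Phi_{kh}|_{I\times\Gamma}$ in $X_{kh}(\Gamma)$. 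No hard estimates are needed; the equivalence is an exercise in unwinding the definitions of $B$ and the projection $\tilde P_{kh}$.
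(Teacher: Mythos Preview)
Your proof is correct and follows essentially the same approach as the paper. The paper also relies on the splitting $\Phi_{kh}=\Theta_{kh}+\Psi_{kh}$ with $\Theta_{kh}=p_{kh}(\Phi_{kh}|_{I\times\Gamma})$ and $\Psi_{kh}\in X^0_{kh}$ (constructed there via the Galerkin projection $B(\Psi_{kh},\varphi_{kh})=B(\Phi_{kh},\varphi_{kh})$ for all $\varphi_{kh}\in X^0_{kh}$, which by uniqueness coincides with your $\psi_{kh}$), and then uses \eqref{full_discrete_z} and \eqref{Test_EQ} exactly as you do; the only cosmetic difference is that the paper treats the implication \eqref{full_discrete_norm_div}$\Rightarrow$\eqref{paper} in detail and dispatches the converse in one line, whereas you write out both directions.
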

\begin{proof}
	Firstly, we prove that $\partial^h_n z_{kh}$ satisfying the equality \eqref{full_discrete_norm_div} in Definition \ref {full_discrete_noral_div}  implies \eqref{paper} in Definition \ref{defn_Norm}. For any $\Phi_{kh}\in X_{kh}$, we define $\Psi_{kh}\in X^0_{kh}$ by
	\begin{equation}\label{psi}
		B(\Psi_{kh},\varphi_{kh})=B(\Phi_{kh},\varphi_{kh})\qquad\forall\varphi_{kh}\in X^0_{kh}.
	\end{equation}
	Let $\Theta_{kh}:=\Phi_{kh}-\Psi_{kh}$, then $\Theta_{kh}$ is the solution of
	\begin{equation}\nonumber
			B(\Theta_{kh},\varphi_{kh})=0 \qquad\forall\varphi_{kh}\in X^0_{kh},\quad
			\Theta_{kh}|_{I\times\Gamma}=\Phi_{kh}|_{I\times\Gamma}.
	\end{equation}
	Setting $\phi_{kh}=\Phi_{kh}|_{I\times\Gamma}$ in the equality \eqref{full_discrete_norm_div},  there holds the following equality:
	\begin{equation}
		\int_{\Sigma_T}\partial^h_n z_{kh}\Phi_{kh} dsdt=-\int_{\Omega_T}g\Theta_{kh} dxdt=-\int_{\Omega_T}g\Phi_{kh} dxdt+\int_{\Omega_T}g\Psi_{kh} dxdt.\nonumber
	\end{equation}
	Note that $z_{kh}$ is the solution of \eqref{full_discrete_z},  it follows from  \eqref{psi} that
	\begin{equation}
		\int_{\Sigma_T}\partial^h_n z_{kh}\Phi_{kh} dsdt=B(\Phi_{kh},z_{kh})-\int_{\Omega_T}g\Phi_{kh} dxdt,\nonumber
	\end{equation}
	which implies that $\partial^h_n z_{kh}$ satisfies Definition \ref{defn_Norm}. Reversely, $\partial^h_n z_{kh}$ given in Definition \ref{defn_Norm} satisfies the identity \eqref{full_discrete_norm_div} in Definition \ref{full_discrete_noral_div}. This completes the proof.
\end{proof}

Now we are ready to derive the discrete first order optimality system.
\begin{Theorem}\label{full_discrere_control}
	The pair $(\bar{u}_{kh},\bar{y}_{kh})\in U_{ad}\times X_{kh}$ is the optimal solution of the fully discrete optimal control problem \eqref{reduecd_optimal_problem} if and only if $\bar{y}_{kh}:=S_{kh}\bar{u}_{kh}=y_{kh}(\bar{u}_{kh})$ and
	\begin{equation}\label{optimal_condition3}
		\hat{J}^{'}_{kh}(\bar{u}_{kh})(v-\bar{u}_{kh})\ge0 \qquad\forall v\in U_{ad}.
	\end{equation}
	Furthermore, we define the fully discrete adjoint state $\bar{z}_{kh}\in X^0_{kh}$ as
	\begin{equation}
		B(\varphi_{kh},\bar{z}_{kh})=(\bar{y}_{kh}-y_d,\varphi_{kh})\qquad\forall\varphi_{kh}\in X^0_{kh},
	\end{equation}
	with which the optimality condition \eqref{optimal_condition3} can be equivalently written as
	\begin{equation}\label{optimal_condition_adjoint}
		\hat{J}^{'}_{kh}(\bar{u}_{kh})(v-\bar{u}_{kh})=\!\!\int_{\Sigma_T}\!\!\!\!(\alpha\bar{u}_{kh}-\partial^h_n\bar{z}_{kh})(v-\bar{u}_{kh})dsdt\ge0 \qquad\forall v\in U_{ad},
	\end{equation}
	where $\partial^h_n\bar{z}_{kh}\in X_{kh}(\Gamma)$ is the discrete normal derivative of  $\bar{z}_{kh}\in X^0_{kh}$ defined in Definition \ref{full_discrete_noral_div}.
\end{Theorem}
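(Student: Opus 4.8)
The plan is to follow the same line of argument as in the proof of Theorem~\ref{time_discrete_NC}, since the fully discrete optimality system is structurally identical to the semi-discrete one, with $X_{kh}$, $X^0_{kh}$, $X_{kh}(\Gamma)$, $\tilde{P}_{kh}$, $p_{kh}(\cdot)$ and $\partial^h_n(\cdot)$ playing the roles of $X_k$, $X^0_k$, $X_k(\Gamma)$, $\tilde{P}_k$, $p_k(\cdot)$ and $\partial_n(\cdot)$. First I would note that, since the control-to-state map $S_{kh}$ is affine and $J$ is strictly convex and coercive in $u$, the reduced functional $\hat{J}_{kh}$ is strictly convex; hence \eqref{reduecd_optimal_problem} has a unique minimizer $\bar{u}_{kh}$ and, by the standard calculus-of-variations argument, the minimizer is characterized by the variational inequality \eqref{optimal_condition3} (the reverse implication being immediate from convexity). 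It therefore remains only to rewrite \eqref{optimal_condition3} in the form \eqref{optimal_condition_adjoint}.

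To do so, I would fix $v\in U_{ad}$ and set $w_{kh}:=y_{kh}(v)-\bar{y}_{kh}$. Subtracting the two instances of \eqref{spatial_dicrete_state} written for the data $v$ and for $\bar{u}_{kh}$, the right-hand sides $(f,\varphi_{kh})_I+(y_0,\varphi^+_{kh,0})$ cancel, so $w_{kh}$ satisfies $B(w_{kh},\varphi_{kh})=0$ for all $\varphi_{kh}\in X^0_{kh}$ together with $w_{kh}|_{I\times\Gamma}=\tilde{P}_{kh}v-\tilde{P}_{kh}\bar{u}_{kh}=\tilde{P}_{kh}(v-\bar{u}_{kh})$. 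By uniqueness of the solution of \eqref{Test_EQ}, this forces $w_{kh}=p_{kh}\big(\tilde{P}_{kh}(v-\bar{u}_{kh})\big)$, and similarly $w_{kh}=\tilde{y}_{kh}(v-\bar{u}_{kh})$. Substituting this into the expression for $\hat{J}^{'}_{kh}(\bar{u}_{kh})(v-\bar{u}_{kh})$ and invoking Definition~\ref{full_discrete_noral_div} with $g:=\bar{y}_{kh}-y_d$ — for which the discrete backward solution $z_{kh}$ of \eqref{full_discrete_z} coincides with the discrete adjoint state $\bar{z}_{kh}$ — I obtain
\begin{align*}
\hat{J}^{'}_{kh}(\bar{u}_{kh})(v-\bar{u}_{kh})
&=\int_{\Omega_T}(\bar{y}_{kh}-y_d)\,p_{kh}\big(\tilde{P}_{kh}(v-\bar{u}_{kh})\big)dxdt+\alpha\int_{\Sigma_T}\bar{u}_{kh}(v-\bar{u}_{kh})dsdt\\
&=-\int_{\Sigma_T}\partial^h_n\bar{z}_{kh}\,\tilde{P}_{kh}(v-\bar{u}_{kh})dsdt+\alpha\int_{\Sigma_T}\bar{u}_{kh}(v-\bar{u}_{kh})dsdt.
\end{align*}

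Finally, since $\partial^h_n\bar{z}_{kh}\in X_{kh}(\Gamma)$ and $\tilde{P}_{kh}$ is the $L^2(I;L^2(\Gamma))$-orthogonal projection onto $X_{kh}(\Gamma)$, it is self-adjoint and acts as the identity on $X_{kh}(\Gamma)$; hence $\int_{\Sigma_T}\partial^h_n\bar{z}_{kh}\,\tilde{P}_{kh}(v-\bar{u}_{kh})dsdt=\int_{\Sigma_T}\partial^h_n\bar{z}_{kh}(v-\bar{u}_{kh})dsdt$. Combining this with the display above and with \eqref{optimal_condition3} yields \eqref{optimal_condition_adjoint}. I do not expect a genuine obstacle here; the only point that needs care is precisely this last projection step, namely that the test function $v-\bar{u}_{kh}$, which in general does not belong to $X_{kh}(\Gamma)$, must first be replaced by its $\tilde{P}_{kh}$-projection before \eqref{full_discrete_norm_div} can be applied — a replacement that is legitimate only because $\partial^h_n\bar{z}_{kh}$ is itself a discrete boundary function, so that testing against $\tilde{P}_{kh}(v-\bar{u}_{kh})$ and against $v-\bar{u}_{kh}$ give the same value.
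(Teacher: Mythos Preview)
Your proposal is correct and follows exactly the approach the paper uses in the proof of the semi-discrete analogue, Theorem~\ref{time_discrete_NC}: identify $y_{kh}(v)-\bar{y}_{kh}$ with $p_{kh}(\tilde{P}_{kh}(v-\bar{u}_{kh}))$, invoke the defining identity \eqref{full_discrete_norm_div} for $\partial^h_n\bar{z}_{kh}$ with $g=\bar{y}_{kh}-y_d$, and then drop $\tilde{P}_{kh}$ by orthogonality since $\partial^h_n\bar{z}_{kh}\in X_{kh}(\Gamma)$. The paper in fact omits the proof of Theorem~\ref{full_discrere_control} altogether, clearly intending it to be read off from the proof of Theorem~\ref{time_discrete_NC}; your write-up supplies precisely that argument.
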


	\section{Error estimates for the state equation}
	\setcounter{equation}{0}
	In order to perform the error analysis  for the discrete optimal control problem, we shall first provide a priori error estimates for the finite element discretization of the state equation.

	\subsection{Analysis of the temporal discretization error}
	In this subsection we mainly   estimate the error  between the temporal semi-discretization $y_k(u)$ and $y:=y(u)$ for given $u$. By using the Aubin-Nitsche technique \cite{BrennerScott2008}, we convert the finite element error into the projection or interpolation error.
	
To this end, we  introduce some projections and interpolations. Define the $L^2$-projection $P_k:L^2(I;L^2(\Omega))\to \tilde{X}_k$ such that $P_kz$ satisfies
	\begin{equation}\label{pro_function_d}
		P_kz|_{I_m}:=\frac{1}{k_m}\int_{t_{m-1}}^{t_m}z(s)ds, \qquad m=1,\cdots,M,\quad\forall z\in L^2(I;L^2(\Omega)).
	\end{equation}
	For simplicity we write $P^m_kz:=P_kz|_{I_m},\, m=1,\cdots,M$. For any $0\le s\le1$, there holds
	\begin{equation}\label{pro_error_esti}
		\Vert z-P_kz\Vert_I\le Ck^s\Vert z\Vert_{H^s(I;L^2(\Omega))},\quad\forall z\in H^s(I;L^2(\Omega)).
	\end{equation}
	
	In addition, we   need the following two interpolation operators in time. Define the interpolation operators $\pi^r_k:C(\bar{I};L^2(\Omega))\to \tilde{X}_k$ and $\pi^l_k:C(\bar{I};L^2(\Omega))\to \tilde{X}_k$ such that for any $w\in C(\bar{I};L^2(\Omega))$, $\pi^r_kw$ and $\pi_k^lw$ satisfy
	\begin{equation}\label{def_interpolation}
			\pi^r_kw|_{I_m}=w(t_m),\quad
			\pi^l_kw|_{I_m}=w(t_{m-1}),\qquad m=1,\cdots,M.
	\end{equation}
	Note that the above interpolations are defined by taking the end point values on each subinterval, and the  interpolation error estimate is similar to \eqref{pro_error_esti}. Due to the fact $H^s(I;L^2(\Omega))\hookrightarrow C(\bar{I};L^2(\Omega))$ for $s\in (\frac{1}{2},1]$, we obtain for any $w\in H^s(I;L^2(\Omega))$ (cf. \cite{B.J.Li,Schtzau}) that
	\begin{equation}\label{interpolation_error_esti}
			\left\Vert w-\pi^l_kw\right\Vert_I\le Ck^s\Vert w\Vert_{H^s(I;L^2(\Omega))},\quad
			\left\Vert w-\pi^r_kw\right\Vert_I\le Ck^s\Vert w\Vert_{H^s(I;L^2(\Omega))}.
	\end{equation}

	The scheme \eqref{time_semi_state_e} is a temporal DG(0) discretization of equation \eqref{BC_stateE} (cf. \cite{V.Thom}), and there exists a unique  solution denoted by $y_k(u)\in X_k$. Note that $f\in L^2(I;L^2(\Omega)),\, y_0\in L^2(\Omega),\, u\in H^{\frac{1}{4}}(I;L^2(\Gamma))\cap L^2(I;H^{\frac{1}{2}}(\Gamma))$, then $$y\in H^{\frac{1}{2}}(I;L^2(\Omega))\cap L^2(I;H^1(\Omega)).$$ However, the DG(0) semi-discretization $y_k(u)\in X_k$ is piecewise constant in time and does not belong to $H^{\frac{1}{2}}(I;L^2(\Omega))$ although $\tilde{P}_ku\in H^{\frac{1}{4}}(I;L^2(\Gamma))\cap L^2(I;H^{\frac{1}{2}}(\Gamma))$. In fact, we can verify that (cf. \cite[Page 20]{J. Sokoowski})
	$$y_k(u)\in H^{\frac{1}{2}-\varepsilon}(I;L^2(\Omega))\quad \forall \varepsilon>0.$$ Therefore, the DG(0) semi-discretization \eqref{time_semi_state_e} is a nonconforming Galerkin method (i.e., $X_k\nsubseteq H^{\frac{1}{2}}(I;L^2(\Omega))$), but we still have the  Galerkin orthogonality property
	 \begin{equation}
		B(y-y_k(u),\varphi_k)=0\qquad\forall\varphi_k\in X^0_k.\nonumber
	 \end{equation}
		
	Below, let us give several lemmas on the stability and error estimation for the temporal semi-discretization of parabolic equations.
	\begin{Lemma}\label{lemma_g}
		For any $s\in (\frac{1}{2},1]$, assume that $y\in H^s(I;L^2(\Omega))\cap L^2(I;H^{2s}(\Omega))$ is the solution of \eqref{BC_stateE}. Define the piecewise constant function $g$ by
		\begin{align*}
			g|_{I_m}:=\frac{P^m_ky-y(t_m)}{k_m}-\frac{P^{m-1}_ky-y(t_{m-1})}{k_m},\qquad m=1,\cdots,M,\quad P^0_ky:=y(t_0)=y_0.
		\end{align*}
Then we have
		\begin{equation}
			\sum\limits_{m=1}^Mk_m^{1-(2s-1)}\Vert g\Vert^2_{L^2(I_m;L^2(\Omega))}\le C\Vert y\Vert^2_{H^s(I;L^2(\Omega))}.\nonumber
		\end{equation}
	\end{Lemma}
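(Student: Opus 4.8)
The quantity $g|_{I_m}$ is a second-order difference quotient of the map $t \mapsto P_k y - \pi^r_k y$ evaluated at the nodes, so the plan is to recognize the sum $\sum_m k_m^{2-2s}\Vert g\Vert_{L^2(I_m;L^2(\Omega))}^2$ as (up to constants) a discrete $H^{s}$-type seminorm of $y - P_k y$ or, more precisely, to bound each term $\Vert g|_{I_m}\Vert_{L^2(I_m)}$ by quantities that we can control via the projection error estimate \eqref{pro_error_esti} and the interpolation error estimate \eqref{interpolation_error_esti}. First I would write, for each $m$, the identity
\begin{equation}
k_m\, g|_{I_m} = \big(P^m_k y - y(t_m)\big) - \big(P^{m-1}_k y - y(t_{m-1})\big),\nonumber
\end{equation}
and observe that $P^j_k y - y(t_j)$ is, on the interval $I_j$ (resp. $I_{j+1}$ for the boundary terms), the nodal value of $P_k y - \pi^r_k y$; the first term is literally $(P_k y - \pi^r_k y)|_{I_m}$ while the second is $(P_k y - \pi^l_k y)|_{I_m}$ shifted, using $\pi^l_k y|_{I_m} = y(t_{m-1})$ and $\pi^r_k y|_{I_{m-1}} = y(t_{m-1})$. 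Hence $g$ is a combination of the piecewise-constant functions $P_k y - \pi^r_k y$ and $P_k y - \pi^l_k y$ divided by $k_m$, and $\Vert g\Vert_{L^2(I_m;L^2(\Omega))}^2 = k_m \Vert g|_{I_m}\Vert_{L^2(\Omega)}^2$.

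The key step is then a local-to-global estimate: I would bound
\begin{equation}
\sum_{m=1}^M k_m^{2-2s}\Vert g\Vert_{L^2(I_m;L^2(\Omega))}^2
= \sum_{m=1}^M k_m^{3-2s}\Vert g|_{I_m}\Vert_{L^2(\Omega)}^2
\lesssim \sum_{m=1}^M k_m^{1-2s}\Big(\Vert (P_k y - \pi^r_k y)|_{I_m}\Vert_{L^2(\Omega)}^2 + \Vert (P_k y - \pi^l_k y)|_{I_{m-1}}\Vert_{L^2(\Omega)}^2\Big),\nonumber
\end{equation}
using $k_m^{3-2s} = k_m^{1-2s}\cdot k_m^2$ and absorbing the $k_m^2$ against the $1/k_m^2$ from the two factors of $1/k_m$ in $g|_{I_m}$, together with condition \eqref{condition2} to replace $k_{m-1}$ by a constant times $k_m$ in the shifted term. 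Here $\lesssim$ hides the constant $C$. Now each summand of the form $k_m^{1-2s}\Vert(P_k y - \pi^r_k y)|_{I_m}\Vert_{L^2(\Omega)}^2$ equals $k_m^{-2s}\Vert P_k y - \pi^r_k y\Vert_{L^2(I_m;L^2(\Omega))}^2$, and by the triangle inequality this is controlled by $k_m^{-2s}(\Vert y - P_k y\Vert_{L^2(I_m;L^2(\Omega))}^2 + \Vert y - \pi^r_k y\Vert_{L^2(I_m;L^2(\Omega))}^2)$. Summing over $m$ and invoking the local versions of \eqref{pro_error_esti} and \eqref{interpolation_error_esti} — which give $\Vert y - P_k y\Vert_{L^2(I_m)} \le C k_m^{s}\Vert y\Vert_{H^s(I_m;L^2(\Omega))}$ and likewise for $\pi^r_k$, valid since $s \in (\tfrac12,1]$ so that $y$ is continuous in time — the powers of $k_m$ cancel and we are left with $C\sum_m \Vert y\Vert_{H^s(I_m;L^2(\Omega))}^2 \le C\Vert y\Vert_{H^s(I;L^2(\Omega))}^2$, as desired.

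The main obstacle I anticipate is making the "local interpolation/projection error estimate" step rigorous: estimates \eqref{pro_error_esti} and \eqref{interpolation_error_esti} are stated globally on $I$, and to localize them to a single $I_m$ one needs a Bramble–Hilbert / scaling argument on the reference interval that tracks the fractional Sobolev seminorm correctly, and one must be careful that summing the squared local fractional seminorms $\sum_m \Vert y\Vert_{H^s(I_m)}^2$ is indeed bounded by $\Vert y\Vert_{H^s(I)}^2$ — this is true for the Sobolev–Slobodeckij seminorm because the double integral over $I\times I$ dominates the sum of the double integrals over $I_m \times I_m$, but it requires a word of justification. A secondary technical point is the treatment of the first interval and the term $P^0_k y := y_0$, where the "shifted" interpolant argument degenerates; there one simply uses $P^0_k y - y(t_0) = 0$, so the $m=1$ contribution to $g$ only involves $(P_k y - \pi^r_k y)|_{I_1}$ and is handled by the same bound with no boundary complication. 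Once these localization facts are in hand, the cancellation of powers of $k_m$ is purely algebraic and the proof closes.
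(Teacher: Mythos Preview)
Your approach is correct in spirit and would close, but it takes a different and slightly heavier route than the paper. The paper's proof avoids the localization issue entirely: it first uses the quasi-uniformity condition \eqref{condition1} to pull out a single factor $k^{2-2s}$,
\[
\sum_{m=1}^M k_m^{2-2s}\Vert g\Vert_{L^2(I_m;L^2(\Omega))}^2 \le C k^{2-2s}\sum_{m=1}^M \Vert g\Vert_{L^2(I_m;L^2(\Omega))}^2,
\]
and then bounds the remaining sum by $Ck^{-2}\Vert y-\pi^r_k y\Vert_I^2$ via a direct computation (writing $P^m_k y - y(t_m)=k_m^{-1}\int_{I_m}(y(s)-y(t_m))\,ds$ and applying Cauchy--Schwarz). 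A single application of the \emph{global} interpolation estimate \eqref{interpolation_error_esti} then gives $k^{2-2s}\cdot k^{-2}\cdot k^{2s}=1$ and the lemma follows. This sidesteps both of your anticipated obstacles: no local Bramble--Hilbert argument and no need to sum local Slobodeckij seminorms.

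Two minor corrections to your write-up: the second bracket $P^{m-1}_k y - y(t_{m-1})$ equals $(P_k y-\pi^r_k y)|_{I_{m-1}}$, not $(P_k y-\pi^l_k y)|_{I_{m-1}}$ (since $\pi^l_k y|_{I_{m-1}}=y(t_{m-2})$); and to compare $k_m$ with $k_{m-1}$ in the shifted term you need the quasi-uniformity condition \eqref{condition1}, not the monotonicity condition \eqref{condition2}, because you need $k_{m-1}\le Ck_m$ rather than $k_m\le k_{m-1}$. Neither slip is fatal, but the paper's global argument makes both points moot.
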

	
	\begin{proof}
	Using the condition \eqref{condition1} we  obtain
		\begin{equation}
			\sum\limits_{m=1}^Mk_m^{1-(2s-1)}\Vert g\Vert^2_{L^2(I_m;L^2(\Omega))}\le Ck^{1-(2s-1)}\sum\limits_{m=1}^M\Vert g\Vert^2_{L^2(I_m;L^2(\Omega))}.\nonumber
		\end{equation}
	Therefore,  we only need to estimate the right-hand side of the above inequality:
		\begin{align*}
			\sum\limits_{m=1}^M\Vert g\Vert^2_{L^2(I_m;L^2(\Omega))}&=\sum\limits_{m=1}^Mk_m^{-1}\Vert (P^m_ky-y(t_m))-(P^{m-1}_ky-y(t_{m-1}))\Vert^2_{L^2(\Omega)}\\
			&\le C\sum\limits_{m=1}^Mk_m^{-1}\Vert P^m_ky-y(t_m)\Vert^2_{L^2(\Omega)}\\
			&=C\sum\limits_{m=1}^Mk_m^{-1}\int_\Omega\Big(\frac{1}{k_m}\int_{t_{m-1}}^{t_m}y(s)ds-y(t_m)\Big)^2dx\\
			&=C\sum\limits_{m=1}^Mk_m^{-3}\int_\Omega\Big(\int_{t_{m-1}}^{t_m}(y(s)-y(t_m))ds\Big)^2dx\\
			&\le Ck^{-2}\sum\limits_{m=1}^M\int_\Omega\int_{t_{m-1}}^{t_m}\Big(y(s)-y(t_m)\Big)^2dsdx\\
			&=Ck^{-2}\Vert y-\pi^r_ky\Vert_I^2\\
			&\le Ck^{2(s-1)}\Vert y\Vert_{H^s(I;L^2(\Omega))}^2,
		\end{align*}
		where the second interpolation error estimate of \eqref{interpolation_error_esti} has been used. This completes the proof.
	\end{proof}

\begin{Lemma}\label{jumping_estimate}
	Let $y$ and $y_k(u)$ be the solution of \eqref{BC_stateE} and \eqref{time_semi_state_e},  respectively. For any  $y\in H^s(I;L^2(\Omega))\cap L^2(I;H^{2s}(\Omega))$ {\rm (}$s\in [\frac{1}{2}, 1] ${\rm )},  we have
	\begin{align}
		&\sum\limits_{m=1}^Mk_m^{1-(2s-1)}\Vert \nabla[P_ky-y_k(u)]_{m-1}\Vert^2_{L^2(\Omega)}
		+\sum\limits_{m=1}^Mk_m^{-(2s-1)}\Vert [P_ky-y_k(u)]_{m-1}\Vert^2_{L^2(\Omega)}\nonumber\\
		&\le C
		\begin{cases}
			\Vert y\Vert_{H^s(I;L^2(\Omega))}^2\quad \frac{1}{2}<s\le1,\\
			\Vert y\Vert_{H^{\frac{1}{2}}(I;L^2(\Omega))}^2+\left\Vert \partial_t y\right\Vert_{L^2(I;H^{-1}(\Omega))}^2\quad s=\frac{1}{2}.
		\end{cases}
	\end{align}
\end{Lemma}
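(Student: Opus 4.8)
The plan is to realize $e_k:=P_ky-y_k(u)$ as the DG(0) solution of a parabolic problem whose source is \emph{exactly} the piecewise constant function $g$ of Lemma~\ref{lemma_g}, and then to extract both jump norms at once from a single weighted discrete energy estimate that exploits the non-increasing step condition \eqref{condition2}. Below $\|\cdot\|:=\|\cdot\|_{L^2(\Omega)}$, $e_{k,m}:=e_k|_{I_m}$, $g_m:=g|_{I_m}$, and $[e_k]_0:=e_{k,1}$.

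\emph{Step 1 (reformulation).} Since $y\in L^2(I;H^{2s}(\Omega))\hookrightarrow L^2(I;H^1(\Omega))$ and the temporal averaging defining $P_k$ commutes with the spatial trace, $(P_ky)|_{I\times\Gamma}=\tilde P_k(y|_{I\times\Gamma})=\tilde P_ku=y_k(u)|_{I\times\Gamma}$, whence $e_k\in X^0_k$. Put $\rho_m:=P^m_ky-y(t_m)$, so $\rho_0=0$ and $\rho_m-\rho_{m-1}=k_m g_m$. Combining the Galerkin orthogonality $B(y-y_k(u),\varphi_k)=0$, the commutation of $P_k$ with the spatial gradient (which makes the term $(\nabla(y-P_ky),\nabla\varphi_k)_I$ vanish for $\varphi_k\in X^0_k$), and the dual representation \eqref{dbilinearF} of $B$, one gets for all $\varphi_k\in X^0_k$
\[
B(e_k,\varphi_k)=B(P_ky-y,\varphi_k)=-\sum_{m=1}^{M-1}(\rho_m,[\varphi_k]_m)+(\rho_M,\varphi^-_{k,M}),
\]
and an Abel summation using $\rho_m-\rho_{m-1}=k_m g_m$ and $\rho_0=0$ collapses the right-hand side to $(g,\varphi_k)_I$. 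Localizing $\varphi_k$ to a single subinterval, this is equivalent to $e_{k,0}=0$ and, for $m=1,\dots,M$,
\[
([e_k]_{m-1},\psi)+k_m(\nabla e_{k,m},\nabla\psi)=k_m(g_m,\psi)\qquad\forall\,\psi\in H^1_0(\Omega).
\]

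\emph{Step 2 (weighted energy estimate for $\frac{1}{2}<s\le1$).} Testing the $m$-th equation with $\psi=[e_k]_{m-1}=e_{k,m}-e_{k,m-1}\in H^1_0(\Omega)$ and using $(\nabla a,\nabla(a-b))=\tfrac12\|\nabla a\|^2-\tfrac12\|\nabla b\|^2+\tfrac12\|\nabla(a-b)\|^2$ gives, for each $m$,
\[
\|[e_k]_{m-1}\|^2+\tfrac{k_m}{2}\|\nabla[e_k]_{m-1}\|^2+\tfrac{k_m}{2}\bigl(\|\nabla e_{k,m}\|^2-\|\nabla e_{k,m-1}\|^2\bigr)=k_m(g_m,[e_k]_{m-1}),
\]
which already carries \emph{both} target quantities on the left. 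Multiplying by $k_m^{-(2s-1)}$ and summing over $m$: by summation by parts, \eqref{condition2}, $2-2s\ge0$ and $e_{k,0}=0$, the telescoping term equals $\tfrac12 k_M^{1-(2s-1)}\|\nabla e_{k,M}\|^2+\tfrac12\sum_{m=1}^{M-1}\bigl(k_m^{1-(2s-1)}-k_{m+1}^{1-(2s-1)}\bigr)\|\nabla e_{k,m}\|^2\ge0$ and is dropped; writing the right-hand side as $\bigl(k_m^{(3-2s)/2}\|g_m\|\bigr)\bigl(k_m^{(1-2s)/2}\|[e_k]_{m-1}\|\bigr)$ and applying Young's inequality to absorb the jump factor leaves, with $k_m\|g_m\|^2=\|g\|^2_{L^2(I_m;L^2(\Omega))}$,
\[
\sum_{m=1}^M k_m^{1-(2s-1)}\|\nabla[e_k]_{m-1}\|^2+\sum_{m=1}^M k_m^{-(2s-1)}\|[e_k]_{m-1}\|^2\le C\sum_{m=1}^M k_m^{1-(2s-1)}\|g\|^2_{L^2(I_m;L^2(\Omega))},
\]
and Lemma~\ref{lemma_g} bounds the last sum by $C\|y\|^2_{H^s(I;L^2(\Omega))}$.

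\emph{Step 3 ($s=\frac{1}{2}$) and the main obstacle.} When $s=\frac{1}{2}$ the source $g$ cannot be controlled in $L^2(I;L^2(\Omega))$ but only in $L^2(I;H^{-1}(\Omega))$: the $H^{-1}$-analogue of Lemma~\ref{lemma_g}, obtained from $y(\sigma)-y(t_m)=-\int_\sigma^{t_m}\partial_t y$ and $\partial_t y\in L^2(I;H^{-1}(\Omega))$ (using $k_{m-1}/k_m\le C$ from \eqref{condition1}), yields $\sum_m k_m\|g_m\|^2_{H^{-1}(\Omega)}\le C\bigl(\|y\|^2_{H^{\frac{1}{2}}(I;L^2(\Omega))}+\|\partial_t y\|^2_{L^2(I;H^{-1}(\Omega))}\bigr)$, which is the origin of the right-hand side in that case. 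Step 2 is then repeated verbatim, bounding the source by duality, $k_m(g_m,[e_k]_{m-1})\le\tfrac{k_m}{4}\|\nabla[e_k]_{m-1}\|^2+k_m\|g_m\|^2_{H^{-1}(\Omega)}$, and absorbing the gradient term on the left. The step I expect to be genuinely delicate is the identity of Step 1 — that the defect of $P_ky$ against the DG(0) solution is \emph{precisely} $(g,\cdot)_I$ — which forces one to keep careful track of the initial and jump terms of $B$ and to arrange the Abel summation so that the endpoint projection errors $\rho_m$ telescope exactly into the source $g$ of Lemma~\ref{lemma_g}; once that identity is in hand, the rest is the one-test-function computation above, the only other subtlety being the sign of the telescoping term, which is exactly where the non-increasing mesh hypothesis is used.
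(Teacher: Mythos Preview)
Your proof is correct and follows essentially the same strategy as the paper's: derive the local error equation \((\nabla e_{k,m},\nabla\psi)_{I_m}+([e_k]_{m-1},\psi)=(\text{source},\psi)_{I_m}\), test with \(\psi=[e_k]_{m-1}\), weight by \(k_m^{-(2s-1)}\), sum, and use the non-increasing step condition \eqref{condition2} to discard the telescoping term before invoking Lemma~\ref{lemma_g}. Your Step~1 reaches the identity via Galerkin orthogonality, the dual form \eqref{dbilinearF}, and an Abel summation, whereas the paper gets it by subtracting the continuous and semi-discrete weak forms directly; the resulting equation is the same. For \(s=\tfrac12\) the paper rewrites the source as \(\tilde g=(P^m_ky-P^{m-1}_ky)/k_m-\partial_t y\), bounding the first piece in \(L^2\) via the \(H^{1/2}\)-seminorm estimate \eqref{pro_jum_est} and the second piece by \(H^{-1}\) duality, while you keep the single source \(g\) and bound \(\sum_m k_m\|g_m\|_{H^{-1}}^2\) directly from \(\partial_t y\in L^2(I;H^{-1})\); the two computations are equivalent since \(g\) and \(\tilde g\) induce the same pairing against \(H^1_0\)-test functions.
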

\begin{proof}
	For each fixed $m=1,\cdots,M$, from  (\ref{BC_stateE}) it follows
	\begin{equation}
		(\nabla y,\nabla\varphi)_{I_m}+(\partial_t y,\varphi)_{I_m}=(f,\varphi)_{I_m}\qquad\forall\varphi\in P_0(I_m;H^1_0(\Omega)),\nonumber
	\end{equation}
	i.e.,
	\begin{equation}\label{cont_pro}
		(\nabla P_ky,\nabla\varphi)_{I_m}+(y(t_m)-y(t_{m-1}),\varphi)=(f,\varphi)_{I_m}\qquad\forall\varphi\in P_0(I_m;H^1_0(\Omega)),
	\end{equation}
	where $P_k$ is defined in \eqref{pro_function_d}. Similarly, the temporal semi-discretization solution $y_k(u)$ of (\ref{time_semi_state_e}) on the subinterval $I_m$ satisfies
	\begin{equation}\label{semi_step_pro}
		(\nabla y_k(u),\nabla\varphi)_{I_m}+([y_k(u)]_{m-1},\varphi)=(f,\varphi)_{I_m}\qquad\forall\varphi\in P_0(I_m;H^1_0(\Omega)),
	\end{equation}
where $[y_k(u)]_0:=y^+_{k,0}-y_0$. Subtracting (\ref{semi_step_pro}) from (\ref{cont_pro}), we obtain
	\begin{eqnarray*}
		(\nabla (P_ky-y_k(u)),\nabla\varphi)_{I_m}+(y(t_m)-y(t_{m-1})-[y_k(u)]_{m-1},\varphi)=0\qquad\forall\varphi\in P_0(I_m;H^1_0(\Omega)).
	\end{eqnarray*}
	Note that the solution $y$ of equation (\ref{BC_stateE}) belongs to $ H^{s}(I;L^2(\Omega))\cap L^2(I;H^{2s}(\Omega))$ $(\frac{1}{2}\le s)$ and $ H^1(I;H^{-1}(\Omega))\cap L^2(I;H^1(\Omega))$ (cf. \cite{J.L1}). Therefore, we can rewrite the above equality as
	\begin{equation}\label{semi_pro}
		\begin{split}		
		(\nabla (P_ky-y_k(u)),\nabla\varphi)_{I_m}+&([P_ky-y_k(u)]_{m-1},\varphi)\\
		&=
		\begin{cases}
			([P_ky]_{m-1},\varphi)-(y(t_m)-y(t_{m-1}),\varphi)\vspace{0.2cm}\quad\quad \frac{1}{2}<s\le 1,\\
			([P_ky]_{m-1},\varphi)-\langle\partial_t y,\varphi\rangle_{L^2(I_m;H^{-1},H^{1}_0)}\quad\quad s=\frac{1}{2}
		\end{cases}
	\end{split}
	\end{equation}
 for any $\varphi\in P_0(I_m;H^1_0(\Omega))$, where $[P_ky]_0:=P^1_ky-y_0$, $[P_ky-y_k(u)]_0:=P^1_ky-y_{k,1}(u)$, and $\langle\cdot,\cdot\rangle_{L^2(I_m;H^{-1},H^{1}_0)}$ denotes the duality pairing between $L^2(I_m;H^{-1}(\Omega))$ and $L^2(I_m;H^1_0(\Omega))$.

	Define the function $\tilde{g}$ as
	\begin{equation}
		\tilde{g}|_{I_m}:=\frac{P^m_ky-P^{m-1}_ky}{k_m}-\partial_t y|_{I_m}.\nonumber
	\end{equation}
	By using the function $g$ in Lemma \ref{lemma_g} and  the above function $\tilde{g}$, the equality (\ref{semi_pro}) becomes
	\begin{equation}\label{semi_pro1}
		(\nabla (P_ky-y_k(u)),\nabla\varphi)_{I_m}+([P_ky-y_k(u)]_{m-1},\varphi)=
		\begin{cases}
			(g,\varphi)_{I_m}\quad&\frac{1}{2}<s\le1,\\
			\langle\tilde{g},\varphi\rangle_{L^2(I_m;H^{-1},H^{1}_0)}\quad&s=\frac{1}{2}
		\end{cases}
	\end{equation}
for all $\varphi\in P_0(I_m;H^1_0(\Omega))$. By taking $\varphi|_{I_m}:=[P_ky-y_k(u)]_{m-1}$ in the above equality, we obtain
	\begin{equation}
		\begin{split}
			\big(\nabla (P_ky-y_k(u)),\nabla [P_ky-&y_k(u)]_{m-1}\big)_{I_m}+\left\Vert[P_ky-y_k(u)]_{m-1}\right\Vert^2_{L^2(\Omega)}\\
			&=
			\begin{cases}
				(g,[P_ky-y_k(u)]_{m-1})_{I_m}\quad&\frac{1}{2}<s\le1,\\
				\langle\tilde{g},[P_ky-y_k(u)]_{m-1}\rangle_{L^2(I_m;H^{-1},H^{1}_0)}\quad &s=\frac{1}{2},\nonumber
			\end{cases}
		\end{split}
	\end{equation}
	i.e.,
	\begin{equation}\label{pro_semi_jump}
		\begin{split}
			&\frac{1}{2}\Big(\Vert \nabla [P_ky-y_k(u)]_{m-1}\Vert^2_{I_m}+\Vert\nabla (P_ky-y_k(u))_m\Vert^2_{I_m}-\Vert\nabla (P_ky-y_k(u))_{m-1}\Vert^2_{I_m}\Big)\\&+\Vert[P_ky-y_k(u)]_{m-1}\Vert^2_{L^2(\Omega)}
			=
			\begin{cases}
				(g,[P_ky-y_k(u)]_{m-1})_{I_m}\quad&\frac{1}{2}<s\le1,\\
				\langle\tilde{g},[P_ky-y_k(u)]_{m-1}\rangle_{L^2(I_m;H^{-1},H^{1}_0)}\quad&s=\frac{1}{2},
			\end{cases}
		\end{split}
	\end{equation}
	where $(P_ky-y_k(u))_m:=P^m_ky-y_{k,m}(u)$ $(m\ne 0)$ and $(P_ky-y_k(u))_0:=0$. It remains to  estimate the right-hand side of the above equality for $\frac{1}{2}<s\le 1$ and $s=\frac{1}{2}$, respectively.
	
	For $s\in(\frac{1}{2},1]$, we use the first identity of \eqref{pro_semi_jump} and the Cauchy-Schwarz inequality to obtain
	\begin{align*}
		&\frac{1}{2}k_m\Big(\Vert \nabla [P_ky-y_k(u)]_{m-1}\Vert^2_{L^2(\Omega)}+\Vert\nabla (P_ky-y_k(u))_m\Vert^2_{L^2(\Omega)}-\Vert\nabla (P_ky-y_k(u))_{m-1}\Vert^2_{L^2(\Omega)}\Big)\\
		&+\Vert[P_ky-y_k(u)]_{m-1}\Vert^2_{L^2(\Omega)}\le\frac{k_m}{2}\Vert g\Vert^2_{I_m}+\frac{\Vert[P_ky-y_k(u)]_{m-1}\Vert^2_{I_m}}{2k_m}.\nonumber
	\end{align*}
Multiplying by $k^{-(2s-1)}_m$ on both sides of the above estimate and absorbing the second term of the right-hand side to the left, we obtain
	\begin{align*}
		&k^{1-(2s-1)}_m\Big(\Vert \nabla [P_ky-y_k(u)]_{m-1}\Vert^2_{L^2(\Omega)}+\Vert\nabla (P_ky-y_k(u))_m\Vert^2_{L^2(\Omega)}-\Vert\nabla (P_ky-y_k(u))_{m-1}\Vert^2_{L^2(\Omega)}\Big)\\
		&+k^{-(2s-1)}_m\Vert[P_ky-y_k(u)]_{m-1}\Vert^2_{L^2(\Omega)}
		\le k^{1-(2s-1)}_m\Vert g\Vert^2_{I_m}.
	\end{align*}
	By using the step size condition \eqref{condition2}, the above estimate can be rewritten as
	\begin{align*}
		&k^{1-(2s-1)}_m\Vert \nabla [P_ky-y_k(u)]_{m-1}\Vert^2_{L^2(\Omega)}+k^{1-(2s-1)}_m\Vert\nabla (P_ky-y_k(u))_m\Vert^2_{L^2(\Omega)}\\
		&-k^{1-(2s-1)}_{m-1}\Vert\nabla(P_ky-y_k(u))_{m-1}\Vert^2_{L^2(\Omega)}
		+k^{-(2s-1)}_m\Vert[P_ky-y_k(u)]_{m-1}\Vert^2_{L^2(\Omega)}
		\le k^{1-(2s-1)}_m\Vert g\Vert^2_{I_m},
	\end{align*}
 where $k_0:=1$. Summing  up the above estimate for $m$ and using Lemma \ref{lemma_g}, we obtain the result for the case $s\in (\frac{1}{2}, 1]$.
	
	When $s=\frac{1}{2}$, we use the second identity of (\ref{pro_semi_jump}) and the Cauchy-Schwarz inequality to obtain
	\begin{align*}
		&\frac{1}{2}k_m\Big(\left\Vert\nabla [P_ky-y_k(u)]_{m-1}\right\Vert^2_{L^2(\Omega)}+\left\Vert\nabla (P_ky-y_k(u))_m\right\Vert^2_{L^2(\Omega)}-\left\Vert\nabla (P_ky-y_k(u))_{m-1}\right\Vert^2_{L^2(\Omega)}\Big)\\
		&+\left\Vert[P_ky-y_k(u)]_{m-1}\right\Vert^2_{L^2(\Omega)}
		\le\frac{k_m}{2}\left\Vert \frac{P^m_ky-P^{m-1}_ky}{k_m}\right\Vert^2_{I_m}+\frac{\left\Vert[P_ky-y_k(u)]_{m-1}\right\Vert^2_{I_m}}{2k_m}\\&+C\left\Vert\partial_t y\right\Vert^2_{L^2(I_m;H^{-1}(\Omega))}
		+\frac{1}{4}\left\Vert \nabla[P_ky-y_k(u)]_{m-1}\right\Vert^2_{I_m}.\nonumber
	\end{align*}
Absorbing the second and fourth terms of the right-hand side to the left and employing the step size condition \eqref{condition2} to the left, we have
	\begin{align*}
		&\frac{1}{2}k_m\left\Vert \nabla [P_ky-y_k(u)]_{m-1}\right\Vert^2_{L^2(\Omega)}+\!k_m\left\Vert\nabla (P_ky-y_k(u))_m\right\Vert^2_{L^2(\Omega)}-k_{m-1}\left\Vert\nabla (P_ky-y_k(u))_{m-1}\right\Vert^2_{L^2(\Omega)}\\
		&+\Vert[P_ky-y_k(u)]_{m-1}\Vert^2_{L^2(\Omega)}
		\le k_m\left\Vert \frac{P^m_ky-P^{m-1}_ky}{k_m}\right\Vert^2_{I_m}+C\left\Vert\partial_t y\right\Vert^2_{L^2(I_m;H^{-1}(\Omega))}.
	\end{align*}
	Summing up for $m=1,\cdots,M$ in the above estimate, we arrive at
	\begin{equation}\label{desire_E}
		\begin{split}
			&\sum\limits_{m=1}^M\Big( \frac{1}{2}k_m\left\Vert \nabla [P_ky-y_k(u)]_{m-1}\right\Vert^2_{L^2(\Omega)}+\left\Vert[P_ky-y_k(u)]_{m-1}\right\Vert^2_{L^2(\Omega)}\Big)\\
			&\le\sum\limits_{m=1}^M \Vert P^m_ky-P^{m-1}_ky\Vert^2_{L^2(\Omega)}+C\left\Vert\partial_t y\right\Vert^2_{L^2(I;H^{-1}(\Omega))}.
		\end{split}
	\end{equation}
	Hence, it remains to estimate the first term on the right-hand side of (\ref{desire_E}):
	\begin{align}\label{pro_jum_est}
		&\sum\limits_{m=1}^M \left\Vert P^m_ky-P^{m-1}_ky\right\Vert^2_{L^2(\Omega)}\notag
		=\sum\limits_{m=1}^M \left\Vert \frac{1}{k_m}\int_{t_{m-1}}^{t_m}y(s)ds-\frac{1}{k_{m-1}}\int_{t_{m-2}}^{t_{m-1}}y(s)ds\right\Vert^2_{L^2(\Omega)} \notag \\
		&=\sum\limits_{m=1}^M \left\Vert \frac{1}{k_mk_{m-1}}\int_{t_{m-1}}^{t_m}\int_{t_{m-2}}^{t_{m-1}}(y(s)-y(\tau))dsd\tau\right\Vert^2_{L^2(\Omega)}\notag \\
		&\le\sum\limits_{m=1}^M \frac{1}{k_m^2k^2_{m-1}}\int_{t_{m-1}}^{t_m}\int_{t_{m-2}}^{t_{m-1}}\frac{\left\Vert y(s)-y(\tau)\right\Vert^2_{L^2(\Omega)}}{(s-\tau)^2}dsd\tau\int_{t_{m-1}}^{t_m}\int_{t_{m-2}}^{t_{m-1}}(s-\tau)^2dsd\tau\notag\\
		&\le \sum\limits_{m=1}^M \frac{1}{k_m^2k^2_{m-1}}k_mk_{m-1}(k_m+k_{m-1})^2\int_{t_{m-1}}^{t_m}\int_{t_{m-2}}^{t_{m-1}}\frac{\left\Vert y(s)-y(\tau)\right\Vert^2_{L^2(\Omega)}}{(s-\tau)^2}dsd\tau\notag \\
		&\le C \sum\limits_{m=1}^M\int_{t_{m-1}}^{t_m}\int_{t_{m-2}}^{t_{m-1}}\frac{\left\Vert y(s)-y(\tau)\right\Vert^2_{L^2(\Omega)}}{(s-\tau)^2}dsd\tau\notag \\
		&\le C\Vert y\Vert^2_{H^{\frac{1}{2}}(I;L^2(\Omega))},
	\end{align}
	where we have used \eqref{condition1} and the norm definition of the space $H^{\frac{1}{2}}(I;L^2(\Omega))$. This completes the proof of the lemma.
\end{proof}

Based on the previous lemma, we obtain the following result.
\begin{Lemma}\label{jump_main_esti}
	Assume that $y$ and $y_k(u)$ are the solutions of equations \eqref{BC_stateE} and \eqref{time_semi_state_e} respectively. For $y\in H^s(I;L^2(\Omega))\cap L^2(I;H^{2s}(\Omega))$, $\forall s\in [\frac{1}{2},1]$, there holds
	\begin{align}
&\sum\limits_{m=1}^Mk_m^{-(2s-1)}\Vert [y_k(u)]_{m-1}\Vert^2_{L^2(\Omega)}\le C
		\begin{cases}
			\left\Vert y\right\Vert_{H^s(I;L^2(\Omega))}^2\quad \frac{1}{2}<s\le1,\\
			\Vert y\Vert_{H^{\frac{1}{2}}(I;L^2(\Omega))}^2+\left\Vert \partial_t y\right\Vert_{L^2(I;H^{-1}(\Omega))}^2\quad s=\frac{1}{2}.\nonumber
		\end{cases}
	\end{align}
\end{Lemma}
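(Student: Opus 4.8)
The plan is to derive the bound on $\sum_m k_m^{-(2s-1)}\Vert[y_k(u)]_{m-1}\Vert_{L^2(\Omega)}^2$ by splitting the jumps of the discrete solution through the auxiliary projection $P_k y$, for which the jump control is already available from Lemma~\ref{jumping_estimate}. Concretely, I would write
\begin{equation*}
[y_k(u)]_{m-1} = [y_k(u)-P_ky]_{m-1} + [P_ky]_{m-1},
\end{equation*}
so that by the triangle inequality and $(a+b)^2\le 2a^2+2b^2$,
\begin{equation*}
\sum_{m=1}^M k_m^{-(2s-1)}\Vert[y_k(u)]_{m-1}\Vert_{L^2(\Omega)}^2
\le 2\sum_{m=1}^M k_m^{-(2s-1)}\Vert[P_ky-y_k(u)]_{m-1}\Vert_{L^2(\Omega)}^2
+ 2\sum_{m=1}^M k_m^{-(2s-1)}\Vert[P_ky]_{m-1}\Vert_{L^2(\Omega)}^2.
\end{equation*}
The first sum on the right is exactly (a part of) what Lemma~\ref{jumping_estimate} bounds, so it is already controlled by the right-hand side stated in the present lemma. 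Hence everything reduces to estimating the jumps of the time projection, $\sum_m k_m^{-(2s-1)}\Vert[P_ky]_{m-1}\Vert_{L^2(\Omega)}^2$, where $[P_ky]_{m-1}=P_k^my - P_k^{m-1}y$ (with $P_k^0y:=y_0$).

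Next I would estimate $\Vert P_k^m y - P_k^{m-1}y\Vert_{L^2(\Omega)}$ in the two regimes separately. For $\tfrac12<s\le1$, since $y\in H^s(I;L^2(\Omega))\hookrightarrow C(\bar I;L^2(\Omega))$, I can compare each cell average to the nodal value: $P_k^m y - y(t_m) = \tfrac1{k_m}\int_{I_m}(y(s)-y(t_m))\,ds$, and similarly for index $m-1$, so that
\begin{equation*}
[P_ky]_{m-1} = \big(P_k^m y - y(t_{m-1})\big) - \big(P_k^{m-1}y - y(t_{m-1})\big),
\end{equation*}
which after expansion is controlled by $\pi^r_k$- and $\pi^l_k$-type interpolation remainders; multiplying by $k_m^{-(2s-1)}$, using \eqref{condition1} to pass to a uniform power of $k$, and invoking the interpolation error estimates \eqref{interpolation_error_esti} then yields the bound $C\Vert y\Vert_{H^s(I;L^2(\Omega))}^2$. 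In fact this is essentially the computation already carried out inside the proof of Lemma~\ref{jumping_estimate} for the case $s\in(\tfrac12,1]$ (the estimate of $\sum_m\Vert P^m_ky-P^{m-1}_ky\Vert^2$ via fractional-seminorm integrals), so I would mostly cite that line of argument. For $s=\tfrac12$, I reuse verbatim the double-integral estimate \eqref{pro_jum_est} from the proof of Lemma~\ref{jumping_estimate}, which gives $\sum_m\Vert P_k^m y - P_k^{m-1}y\Vert_{L^2(\Omega)}^2 \le C\Vert y\Vert_{H^{1/2}(I;L^2(\Omega))}^2$; note here $2s-1=0$, so no negative power of $k_m$ is present and the bound is immediate once the sum of squared differences is controlled. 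The $m=1$ term, involving $P_k^1 y - y_0$, is handled the same way since $y_0=y(t_0)$ and $y$ is continuous into $L^2(\Omega)$ in the relevant range.

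Combining the two pieces gives the claimed estimate in both regimes. The main obstacle, as usual in this circle of arguments, is the borderline case $s=\tfrac12$: there $H^{1/2}(I;L^2(\Omega))$ does not embed into $C(\bar I;L^2(\Omega))$, so nodal values are not literally meaningful, and one must route the estimate through the fractional Sobolev–Slobodeckij seminorm as in \eqref{pro_jum_est} rather than through pointwise interpolation — this is precisely why the right-hand side in that case carries the extra term $\Vert\partial_t y\Vert_{L^2(I;H^{-1}(\Omega))}^2$ inherited from Lemma~\ref{jumping_estimate}, even though the $[P_ky]$-contribution itself only needs the $H^{1/2}$-seminorm. Everything else is a routine triangle-inequality decomposition plus the quasi-uniformity condition \eqref{condition1}.
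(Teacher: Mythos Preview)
Your proposal is correct and follows essentially the same route as the paper: the same splitting via $[P_ky]_{m-1}$, the same appeal to Lemma~\ref{jumping_estimate} for the $[P_ky-y_k(u)]$ part, and the same two-case treatment of $\sum_m k_m^{-(2s-1)}\Vert[P_ky]_{m-1}\Vert^2$ (pointwise interpolation via $\pi^l_k,\pi^r_k$ for $s\in(\tfrac12,1]$, and the fractional-seminorm estimate \eqref{pro_jum_est} for $s=\tfrac12$). One small remark: the computation you allude to as ``already carried out inside the proof of Lemma~\ref{jumping_estimate} for the case $s\in(\tfrac12,1]$'' is actually done only for $s=\tfrac12$ there; for $s>\tfrac12$ the paper writes out the weighted estimate afresh in the present proof, exactly along the lines you describe.
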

\begin{proof}
	Note that
	\begin{align}\nonumber
		\sum\limits_{m=1}^Mk_m^{-(2s-1)}\Vert [y_k(u)]_{m-1}\Vert^2_{L^2(\Omega)}&\le 2\sum\limits_{m=1}^Mk_m^{-(2s-1)}\big(\Vert [P_ky-y_k(u)]_{m-1}\Vert^2_{L^2(\Omega)}
		+\Vert [P_ky]_{m-1}\Vert^2_{L^2(\Omega)}\big).\nonumber
	\end{align}
	By using Lemma \ref{jumping_estimate}, it suffices to estimate the second term on the right-hand side of the above estimate, which will be done by arguing the case $\frac{1}{2}<s\le1$ and $s=\frac{1}{2}$, respectively.
	
	For $s\in(\frac{1}{2},1]$, we apply the Cauchy-Schwarz inequality and the interpolation operators in \eqref{def_interpolation} to obtain
	\begin{eqnarray}\notag
		&&\sum\limits_{m=1}^Mk_m^{-(2s-1)}\Vert [P_ky]_{m-1}\Vert^2_{L^2(\Omega)}\notag\\
		&=&\sum\limits_{m=2}^Mk_m^{-(2s-1)}\left\Vert\frac{1}{k_m}\int^{t_m}_{t_{m-1}}y(s)ds-\frac{1}{k_{m-1}}\int^{t_{m-1}}_{t_{m-2}}y(s)ds\right\Vert^2_{L^2(\Omega)}+k^{-2s+1}_1\Vert P_k^1y-y_0\Vert^2_{L^2(\Omega)}\notag\\
		&=&\sum\limits_{m=2}^Mk_m^{-(2s-1)}\int_\Omega\Big(\frac{1}{k_m}\int^{t_m}_{t_{m-1}}y(s)ds-\frac{1}{k_{m-1}}\int^{t_{m-1}}_{t_{m-2}}y(s)ds\Big)^2dx+k^{-2s+1}_1\int_\Omega\big( P_k^1y-y_0\big)^2dx\notag\\
		&=&\sum\limits_{m=2}^Mk_m^{-(2s-1)}\int_\Omega\Big(\frac{1}{k_m}\int^{t_m}_{t_{m-1}}(y(s)-y(t_{m-1}))ds+\frac{1}{k_{m-1}}\int^{t_{m-1}}_{t_{m-2}}(y(t_{m-1})-y(s))ds\Big)^2dx\notag\\
		&&+k^{-2s+1}_1\int_\Omega\Big( \frac{1}{k_1}\int_{t_0}^{t_1}(y(s)-y_0)ds\Big)^2dx\notag\\
		&\le& 2\sum\limits_{m=2}^Mk_m^{-(2s-1)}\Big(\int_\Omega\Big(\frac{1}{k_m}\int^{t_m}_{t_{m-1}}(y(s)-y(t_{m-1}))ds\Big)^2dx\notag\\
		&&+\int_\Omega\Big(\frac{1}{k_{m-1}}\int^{t_{m-1}}_{t_{m-2}}(y(t_{m-1})-y(s))ds\Big)^2dx\Big)+k^{-2s+1}_1\int_\Omega\Big( \frac{1}{k_1}\int_{t_0}^{t_1}(y(s)-y_0)ds\Big)^2dx\notag\\
		&\le& C k^{-2s}\left( \sum\limits_{m=1}^M\int_\Omega\int^{t_m}_{t_{m-1}}\big(y(s)-y(t_{m-1})\big)^2dsdx+\sum\limits_{m=1}^M\int_\Omega\int^{t_m}_{t_{m-1}}\Big(y(s)-y(t_{m})\Big)^2dsdx\right)\notag\\
		&=&C k^{-2s}\big(\Vert y-\pi^l_ky\Vert_I^2+\Vert y-\pi^r_ky\Vert_I^2\big)\notag\\
		&\le& C\Vert y\Vert_{H^s(I;L^2(\Omega))}^2,\notag
	\end{eqnarray}
where we have used the interpolation error estimate \eqref{interpolation_error_esti}.
	
For $s=\frac{1}{2}$, we can directly use the estimate \eqref{pro_jum_est} in Lemma \ref{jumping_estimate} to obtain the result. This completes the proof.
\end{proof}

In the next theorem, we use the above lemmas and the Aubin-Nitsche trick to estimate the error of the DG(0) semi-discretization under the norm $\Vert\cdot\Vert_{L^2(I,L^2(\Omega))}$.

\begin{Theorem}\label{error_semi_d}
Let $y$ and $y_k(u)$ be the solution of equations \eqref{BC_stateE} and \eqref{time_semi_state_e}, respectively. For any $y\in H^s(I;L^2(\Omega))\cap L^2(I;H^{2s}(\Omega))$ ($s\in [\frac{1}{2}, 1]$) there holds
	\begin{align*}
		\Vert y-y_k(u)\Vert_{L^2(I;L^2(\Omega))}\le C\big(k^s\Vert& u\Vert_{H^{s-\frac{1}{4}}(I;L^2(\Gamma)}+k\Vert f\Vert_{L^2(I;L^2(\Omega))}\big)\\
		&+Ck^s
		\begin{cases}
			\Vert y\Vert_{H^s(I;L^2(\Omega))}\quad \frac{1}{2}<s\le1,\\
			\Vert y\Vert_{H^{\frac{1}{2}}(I;L^2(\Omega))}+\Vert \partial_t y\Vert_{L^2(I;H^{-1}(\Omega))}\quad s=\frac{1}{2}.\nonumber
		\end{cases}
	\end{align*}
\end{Theorem}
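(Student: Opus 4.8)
The plan is to prove the $L^2(I;L^2(\Omega))$-estimate for the DG(0) semi-discretization error $y-y_k(u)$ by the Aubin--Nitsche (duality) technique, converting the error into projection/interpolation errors plus the jump quantities controlled in Lemmas~\ref{jumping_estimate} and~\ref{jump_main_esti}. First I would split the error as
\begin{equation}\nonumber
y-y_k(u) = (y-P_ky) + (P_ky-y_k(u)),
\end{equation}
where $P_k$ is the $L^2$-in-time projection from \eqref{pro_function_d}. The first piece is handled directly by \eqref{pro_error_esti}: since $y\in H^s(I;L^2(\Omega))$, we get $\Vert y-P_ky\Vert_I\le Ck^s\Vert y\Vert_{H^s(I;L^2(\Omega))}$, which already fits into the right-hand side of the claimed estimate. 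So the work is entirely in estimating $e_k:=P_ky-y_k(u)\in X_k$.

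Next I would introduce the dual (backward DG(0)) problem: given $e_k$ as data, let $w_k\in X_k^0$ solve $B(\varphi_k,w_k)=(e_k,\varphi_k)_I$ for all $\varphi_k\in X_k^0$, and let $w$ be the corresponding continuous backward solution of \eqref{T_back_eq} with $g=e_k$. Then $\Vert e_k\Vert_I^2 = B(e_k,w_k)$. Using the Galerkin orthogonality $B(y-y_k(u),\varphi_k)=0$ and the fact that $B(y-P_ky,\varphi_k)$ can be rewritten via integration by parts (the dual representation \eqref{dbilinearF}), I would manipulate $B(e_k,w_k)$ into a sum of terms involving: (i) $(\nabla(y-P_ky),\nabla w_k)_I$, which vanishes or is controlled because $P_k$ commutes suitably in time and $\nabla(P_ky)=P_k\nabla y$; (ii) inner products of the jumps $[P_ky-y_k(u)]_{m-1}$ against $w_k$ or $[w_k]$ at the nodes; and (iii) boundary terms coming from $u-\tilde P_ku$ paired against the normal derivative $\partial_n w_k$ of the dual solution. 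The jump terms are handled by Cauchy--Schwarz with weights $k_m^{\pm(2s-1)}$: one factor absorbed by Lemma~\ref{jumping_estimate}/Lemma~\ref{jump_main_esti} (giving $\Vert y\Vert_{H^s}$ or $\Vert y\Vert_{H^{1/2}}+\Vert\partial_t y\Vert_{L^2(I;H^{-1})}$), the complementary factor giving a discrete norm of $w_k$ that is bounded by $\Vert e_k\Vert_I$ via stability of the backward DG(0) scheme (smoothing estimates: $\sum_m k_m^{2s-1}\Vert[w_k]_m\Vert^2 \lesssim \Vert e_k\Vert_I^2$, and similarly for the weighted $\nabla w_k$ terms). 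The $f$-term contributes $k\Vert f\Vert_{L^2(I;L^2(\Omega))}$ after noting $P_kf$ is the natural quantity and $\Vert f-P_kf\Vert$-type bounds, and the boundary term contributes $k^s\Vert u\Vert_{H^{s-1/4}(I;L^2(\Gamma))}$ via $\Vert u-\tilde P_ku\Vert_{L^2(I;L^2(\Gamma))}\lesssim k^{s-1/4}\Vert u\Vert_{H^{s-1/4}}$ combined with the trace bound $\Vert\partial_n w_k\Vert_{L^2(I;L^2(\Gamma))}\lesssim \Vert e_k\Vert_I$ (a standard $H^{3/2}$-regularity/trace estimate for the smooth/convex backward problem). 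Dividing through by $\Vert e_k\Vert_I$ then yields the bound on $\Vert e_k\Vert_I$, and the triangle inequality finishes the proof.

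The main obstacle I expect is bookkeeping the duality manipulation of $B(e_k,w_k)$ cleanly in the nonconforming setting (recall $X_k\not\subseteq H^{1/2}(I;L^2(\Omega))$), so that each term lands with exactly the right $k_m$-weight to pair a ``rough'' factor controlled by the lemmas with a ``smooth'' factor controlled by backward stability; in particular the endpoint case $s=\tfrac12$ is delicate because there one must use the $\langle\partial_t y,\cdot\rangle_{H^{-1},H^1_0}$ pairing rather than an $L^2$ inner product, matching the second branch of Lemmas~\ref{jumping_estimate} and~\ref{jump_main_esti}. A secondary technical point is justifying the smoothing/stability estimates for the backward DG(0) scheme with the weights $k_m^{2s-1}$ under the mesh conditions \eqref{condition1}--\eqref{condition2}; these are by now standard for piecewise-constant-in-time Galerkin schemes but must be invoked carefully since $s$ ranges over $[\tfrac12,1]$ and the weight exponent $2s-1$ ranges over $[0,1]$, with the non-increasing step condition \eqref{condition2} used to telescope the $\Vert\nabla e_k\Vert$ contributions exactly as in the proof of Lemma~\ref{jumping_estimate}.
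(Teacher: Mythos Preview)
Your overall plan (duality plus the jump Lemmas~\ref{jumping_estimate}/\ref{jump_main_esti}) is the right one, but two aspects of the execution do not line up with what actually makes the argument work, and one of them is a genuine gap.

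First, the paper does \emph{not} use a discrete backward solution $w_k$; it takes $e_k:=y-y_k(u)$ and tests against the \emph{continuous} backward solution $z$ of \eqref{T_back_eq} with $g=e_k$. After integrating by parts (using Definition~\ref{def_T} for $y$ and \eqref{bilinearF} for $y_k(u)$) one arrives at exactly three terms,
\[
\Vert e_k\Vert_I^2=\int_{\Sigma_T}(\tilde P_ku-u)\,\partial_n z
+\int_{\Omega_T} f\,(z-P_kz)
+\sum_{n=1}^M\big([y_k(u)]_{n-1},\,P_k^n z-z(t_{n-1})\big).
\]
The jump term is then handled by Cauchy--Schwarz with weights $k_n^{\pm(2s-1)}$: the factor with $[y_k(u)]_{n-1}$ is Lemma~\ref{jump_main_esti}, and the complementary factor uses only the elementary bound $\Vert P_k^n z-z(t_{n-1})\Vert_{L^2(\Omega)}\le Ck_n^{1/2}\Vert\partial_t z\Vert_{L^2(I_n;L^2(\Omega))}$ together with $\Vert\partial_t z\Vert_I\le C\Vert e_k\Vert_I$. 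No weighted smoothing estimate for a discrete dual is needed; your ``secondary technical point'' disappears once you use the continuous $z$.

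Second---and this is the real gap---your boundary-term accounting is off by $k^{1/4}$. From $\Vert u-\tilde P_ku\Vert\lesssim k^{s-1/4}\Vert u\Vert_{H^{s-1/4}}$ and a plain bound $\Vert\partial_n w\Vert\lesssim\Vert e_k\Vert_I$ you only get $k^{s-1/4}$, not $k^s$. The missing quarter power comes from orthogonality: since $u-\tilde P_ku\perp \tilde X_k(\Gamma)$, one may replace $\partial_n z$ by $\partial_n z-P_k\partial_n z$, and because $\partial_n z\in H^{1/4}(I;L^2(\Gamma))$ (trace of $z\in H^1(I;L^2)\cap L^2(I;H^2)$) this second factor contributes an additional $k^{1/4}$. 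Without this step the estimate fails. Note also that in your splitting $e_k=P_ky-y_k(u)\in X_k^0$ has zero boundary trace, so no boundary term would arise at all from $B(e_k,w_k)$; the appearance of $u-\tilde P_ku$ in your sketch indicates the continuous/discrete duals are being mixed. Working with $e_k=y-y_k(u)$ and the continuous dual, as the paper does, keeps the bookkeeping straight and delivers the correct power.
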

\begin{proof}
	Set $e_k:=y-y_k(u)$ and let $z\in L^2(I;H^2(\Omega)\cap H^1_0(\Omega))\cap H^1(I;L^2(\Omega))$ be the solution of (\ref{T_back_eq}) with $g:=e_k$.
We use the Aubin-Nitsche trick to estimate $\Vert e_k\Vert_{L^2(I,L^2(\Omega))}$. Note that
	\begin{align} \label{E1}\notag
		&\Vert e_k\Vert_{L^2(I;L^2(\Omega))}^2
		=(e_k,-\partial_tz-\Delta z)_I\\ 
		&=\int_{\Omega_T}(-\partial_tz-\Delta z)y dxdt+\int_{\Omega_T}y_k(u)(\partial_tz+\Delta z)dxdt\\ \notag
		&=\int_{\Sigma_T} (y_k(u)-u)\partial_nz dsdt+\int_{\Omega_T}fz dxdt+\int_\Omega y_0z(0)dx+\int_{\Omega_T}(y_k(u)\partial_tz-\nabla y_k(u)\cdot\nabla z) dxdt\\\notag
		&=\int_{\Sigma_T} (\tilde{P}_ku-u)\partial_nz dsdt+\int_{\Omega_T}fz dxdt+\int_\Omega y_0z(0)dx+\int_{\Omega_T}(y_k(u)\partial_tz-\nabla y_k(u)\cdot\nabla z) dxdt,
	\end{align}
    where we have used Definition \ref{def_T} and the equation (\ref{T_back_eq}). The  first term in the above estimate can be bounded by projection errors, so it remains to consider other terms. Note that
	\begin{eqnarray}\label{E2}\notag
		&&\int_{\Omega_T}(y_k(u)\partial_tz-\nabla y_k(u)\cdot\nabla z) dxdt
		=\sum\limits_{n=1}^M\int_{t_{n-1}}^{t_n}(y_k(u),\partial_tz)dt-(\nabla y_k(u),\nabla z)_I\\\notag
		&=&\sum\limits_{n=1}^M\big(y_{k,n}(u),z(t_n)-z(t_{n-1})\big)-(\nabla y_k(u),\nabla z)_I\\ \notag
		&=&\sum\limits_{n=2}^M\big(y_{k,n-1}(u),z(t_{n-1})\big)-\sum\limits_{n=2}^M\big(y_{k,n}(u),z(t_{n-1})\big)-(y^+_{k,0}(u),z(0))-(\nabla y_k(u),\nabla z)_I\\\notag
		&=&-\sum\limits_{n=2}^M\big([y_k(u)]_{n-1},z(t_{n-1})\big)-(y^+_{k,0}(u),z(0))-(\nabla y_k(u),\nabla z)_I\\\notag
		&=&-\sum\limits_{n=2}^M\big([y_k(u)]_{n-1},(P_kz)_{n-1}^+\big)-(\nabla y_k(u),\nabla P_kz)_I-(y^+_{k,0}(u),(P_kz)_0^+)\\\notag
		&&+\sum\limits_{n=2}^M\big([y_k(u)]_{n-1},(P_kz)_{n-1}^+-z(t_{n-1})\big) +(y^+_{k,0}(u),(P_kz)_0^+-z(0))\\  
		&=&-B(y_k(u),P_kz)+\sum\limits_{n=1}^M\big([y_k(u)]_{n-1},(P_kz)_{n-1}^+-z(t_{n-1})\big) +(y_0,(P_kz)_0^+-z(0)),
	\end{eqnarray}
	where we have used \eqref{bilinearF}, (\ref{time_semi_state_e}) and the definition of $P_k$. Combining the above estimates (\ref{E1}) and (\ref{E2}), we have
	\begin{align}\label{IE_3}
		\big\Vert e_k\big\Vert_{L^2(I;L^2(\Omega))}^2\notag
		&=\int_{\Sigma_T} \big(\tilde{P}_ku-u\big)\partial_nz dsdt+\int_{\Omega_T}f(z-P_kz )dxdt\\\notag
		&+ \sum\limits_{n=1}^M\big([y_k(u)]_{n-1},(P_kz)_{n-1}^+-z(t_{n-1})\big)\\
		&=:I_1+I_2+I_3.
	\end{align}
	Now we estimate the terms $I_1$, $I_2$ and $I_3$ in the above equality. Using the fact $u\in H^{s-\frac{1}{4}}(I;L^2(\Gamma))$ and the Cauchy-Schwarz inequality, we are led to
	\begin{align*}
		\vert I_1\vert&=\left\vert\int_{\Sigma_T} (\tilde{P}_ku-u)\partial_nz dsdt\right\vert\\
		&\le \Vert \tilde{P}_ku-u\Vert_{L^2(I;L^2(\Gamma))}\left\Vert \partial_nz-P_k\partial_nz\right\Vert_{L^2(I;L^2(\Gamma))}\\
		&\le Ck^s\Vert u\Vert_{H^{s-\frac{1}{4}}(I;L^2(\Gamma))}\Vert e_k\Vert_{I},
	\end{align*}
	where the following two estimates have been used:
	\begin{align*}
		\big\Vert \tilde{P}_ku-u\big\Vert_{L^2(I;L^2(\Gamma))}&\le Ck^{s-\frac{1}{4}}\Vert u\Vert_{H^{s-\frac{1}{4}}(I;L^2(\Gamma))},\quad
		\left\Vert \partial_nz-P_k\partial_nz\right\Vert_{L^2(I;L^2(\Gamma))}\le  Ck^{\frac{1}{4}}\Vert e_k\Vert_{I}.
	\end{align*}
	The first estimate is the usual projection error estimate (cf. \cite{BrennerScott2008}), while the second can be derived as follows. Since  $z\in H^1(I;L^2(\Omega))\cap L^2(I;H^2(\Omega))$, we have $\partial_nz\in H^{\frac{1}{4}}(I;L^2(\Gamma))$ and
	\begin{equation}
		\Vert \partial_nz\Vert_{H^{\frac{1}{4}}(I;L^2(\Gamma))}\le C(\Vert z\Vert_{L^2(I;H^2(\Omega))}+\Vert z\Vert_{H^1(I;L^2(\Omega))})\le C\Vert e_k\Vert_{I},\nonumber
	\end{equation}
where we have used the trace theorem \cite{J.L1} and the a priori estimate for parabolic equations \cite{L. C. EVANS}. Then we obtain
	\begin{align*}
		\left\Vert\partial_nz-P_k\partial_nz\right\Vert_{L^2(I;L^2(\Gamma))}\le  Ck^{\frac{1}{4}}\Vert \partial_nz\Vert_{H^{\frac{1}{4}}(I;L^2(\Gamma))}
			\le Ck^{\frac{1}{4}}\Vert e_k\Vert_{I}.
	\end{align*}
	 Similarly, $I_2$ can be bounded as
	\begin{align*}
		\vert I_2\vert
		\le\Vert f\Vert_{L^2(I;L^2(\Omega))}\Vert z-P_kz\Vert_{I}
		\le Ck\Vert f\Vert_{L^2(I;L^2(\Omega))}\Vert z\Vert_{H^1(I;L^2(\Omega))}
		\le Ck\Vert f\Vert_{L^2(I;L^2(\Omega))}\Vert e_k\Vert_{I}.
	\end{align*}
	The term $I_3$ can be estimated by
	\begin{align*}
		\vert I_3\vert&=\Big\vert\sum\limits_{n=1}^M([y_k(u)]_{n-1},(P_kz)^+_{n-1}-z(t_{n-1}))\Big\vert\\
		&\le \Big(\sum\limits_{n=1}^Mk^{-(2s-1)}_n\Vert[y_k(u)]_{n-1}\Vert^2_{L^2(\Omega)}\Big)^{\frac{1}{2}}\Big(\sum\limits_{n=1}^Mk^{2s-1}_n\Vert P^n_kz-z(t_{n-1})\Vert^2_{L^2(\Omega)}\Big)^{\frac{1}{2}}\\
		&\le C\Big(\sum\limits_{n=1}^Mk^{-(2s-1)}_n\Vert[y_k(u)]_{n-1}\Vert^2_{L^2(\Omega)}\Big)^{\frac{1}{2}}\Big(\sum\limits_{n=1}^Mk^{2s}_n\Vert \partial_tz\Vert^2_{L^2(I_n;L^2(\Omega))}\Big)^{\frac{1}{2}}\\
		&\le Ck^s\Big(\sum\limits_{n=1}^Mk^{-(2s-1)}_n\Vert[y_k(u)]_{n-1}\Vert^2_{L^2(\Omega)}\Big)^{\frac{1}{2}}\Vert \partial_tz\Vert_{I}\\
		&\le Ck^s\Big(\sum\limits_{n=1}^Mk^{-(2s-1)}_n\Vert[y_k(u)]_{n-1}\Vert^2_{L^2(\Omega)}\Big)^{\frac{1}{2}}\Vert e_k\Vert_{I},
	\end{align*}
	where we have used the Cauchy-Schwarz inequality and the following estimate:
	\begin{equation}
		\left\Vert P^n_kz-z(t_{n-1})\right\Vert^2_{L^2(\Omega)}\le Ck_n \Vert\partial_tz\Vert_{L^2(I_n;L^2(\Omega))},\quad  n=1,\cdots,M.\nonumber
	\end{equation}
	Applying Lemma \ref{jump_main_esti} in the above inequality, we finish the estimate for $I_3$. Inserting the estimates for $I_1,I_2,I_3$ into \eqref{IE_3}, we finally obtain the desired result of this theorem.
\end{proof}

When $\Omega$ is a convex polytope, we will consider the error estimate for the fully discrete solution of the optimal control problem \eqref{BC_Functioal}. Note that the state variable $y$ and its temporal semi-discretization $y_k(u)$ are less regular, i.e., $y\in L^2(I;H^1(\Omega))\cap H^{\frac{1}{2}}(I;L^2(\Omega))$, $y_k(u)\in L^2(I;H^1(\Omega))$. Therefore, we have to give the stability of $y_k(u)$ under the norm $\Vert\cdot\Vert_{L^2(I;H^1(\Omega))}$.

\begin{Proposition}\label{semi_regularity}
For $u\in L^2(I;H^{1\over 2}(\Gamma))\cap H^{1\over 4}(I;L^2(\Gamma))$, let $y\in L^2(I;H^1(\Omega))\cap H^{\frac{1}{2}}(I;L^2(\Omega))$ and $y_k(u)\in X_k$ be the solution of \eqref{BC_stateE} and \eqref{time_semi_state_e}, respectively. Then we have
	\begin{align*}
		\Vert y_k(u)\Vert_{L^2(I;H^1(\Omega))}&\le C\Big(\Vert y\Vert_{L^2(I;H^1(\Omega))}+\Vert y\Vert_{H^{\frac{1}{2}}(I;L^2(\Omega))}+\Vert u\Vert_{H^{\frac{1}{4}}(I;L^2(\Gamma))}\\
		&+\Vert f\Vert_{L^2(I;L^2(\Omega))}+\Vert \partial_t y\Vert_{L^2(I;H^{-1}(\Omega))}\Big).
	\end{align*}
\end{Proposition}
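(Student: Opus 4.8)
The plan is to decompose $y_k(u)$ as $y_k(u) = w_k + \tilde P_k$-extension, where the boundary-value part is handled by a discrete harmonic-type extension and the interior part by standard DG(0) stability. More precisely, first I would split $y_k(u) = y_{k,0}(u) + r_k$, where $y_{k,0}(u)\in X_k^0$ solves a homogeneous-boundary DG(0) problem with right-hand side data and $r_k\in X_k$ carries the inhomogeneous boundary datum $\tilde P_k u$ with zero forcing and zero initial value. Since $r_k$ is precisely $\tilde y_k(\tilde P_k u)$ in the notation of \eqref{time_semi_state_ee}, the task reduces to (i) an $H^1$-stability bound for the homogeneous DG(0) scheme in terms of $f$, $y_0$, and the jump/projection data, and (ii) an $H^1$-stability bound for the purely boundary-driven discrete problem in terms of $\|u\|_{H^{1/4}(I;L^2(\Gamma))}$.

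For step (i) I would test the homogeneous-boundary scheme \eqref{time_semi_state_e} (with $u=0$) against $\varphi_k = y_{k,0}(u)$, using the bilinear form $B$ and its algebraic identity
\begin{equation*}
B(v_k,v_k) = \tfrac12\|v_{k,M}^-\|^2 + \tfrac12\sum_{m=1}^M\|[v_k]_{m-1}\|^2 + \tfrac12\|v_{k,0}^+\|^2 + \|\nabla v_k\|_I^2\nonumber
\end{equation*}
for $v_k\in X_k^0$ (with $[v_k]_0 := v_{k,0}^+ - y_0$ absorbed appropriately); this immediately yields $\|\nabla y_{k,0}(u)\|_I \le C(\|f\|_{L^2(I;L^2(\Omega))} + \|y_0\|_{L^2(\Omega)})$. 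For the boundary part $r_k = \tilde y_k(\tilde P_k u)$, the difficulty is that $r_k$ does not vanish on $\Gamma$, so I cannot test against $r_k$ itself; instead I would subtract a suitable lift. Let $\eta_k\in X_k$ be the piecewise-constant-in-time function whose spatial value on $I_m$ is the harmonic (or Ritz-type continuous) extension of $(\tilde P_k u)|_{I_m}$ with $\|\eta_k(t)\|_{H^1(\Omega)}\le C\|(\tilde P_k u)(t)\|_{H^{1/2}(\Gamma)}$; then $r_k - \eta_k\in X_k^0$, and testing the equation for $r_k$ against $r_k-\eta_k$ gives control of $\|\nabla(r_k-\eta_k)\|_I$ in terms of $\|\nabla\eta_k\|_I$ plus the temporal jump terms $\sum_m k_m^{-1/2}\|[\eta_k]_{m-1}\|\cdot$(something), which is where the $H^{1/4}$-in-time norm of $u$ enters via an estimate analogous to Lemma~\ref{jump_main_esti} and the bound \eqref{pro_jum_est}. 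Combining, $\|\nabla r_k\|_I \le \|\nabla\eta_k\|_I + \|\nabla(r_k-\eta_k)\|_I \le C(\|u\|_{L^2(I;H^{1/2}(\Gamma))} + \|u\|_{H^{1/4}(I;L^2(\Gamma))})$, and by the embedding $H^{1/2}(I;L^2(\Omega))\cap L^2(I;H^1(\Omega))\hookrightarrow L^2(I;H^{1/2}(\Gamma))$ on the trace (or directly by the continuous regularity $y\in L^2(I;H^1(\Omega))$) one can replace $\|u\|_{L^2(I;H^{1/2}(\Gamma))}$ by the stated combination $\|y\|_{L^2(I;H^1(\Omega))}+\|y\|_{H^{1/2}(I;L^2(\Omega))}$.

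The main obstacle is the estimation of the temporal-jump contributions $[\eta_k]_{m-1}$ (equivalently the jumps of the boundary projection $\tilde P_k u$ lifted into the domain): these are not controlled by $L^2$-in-time regularity alone and require exactly the Slobodeckij/Gagliardo seminorm estimate $\sum_m k_m^{-1/2}\|(\tilde P_k u)_m - (\tilde P_k u)_{m-1}\|^2_{L^2(\Gamma)} \le C\|u\|^2_{H^{1/4}(I;L^2(\Gamma))}$, proved by the double-integral argument used in \eqref{pro_jum_est}. The role of the term $\|\partial_t y\|_{L^2(I;H^{-1}(\Omega))}$ on the right-hand side is to absorb the $s=\tfrac12$ borderline case of Lemma~\ref{jumping_estimate}/Lemma~\ref{jump_main_esti}: since $y\in H^{1/2}(I;L^2(\Omega))$ only (not $H^s$ for $s>\tfrac12$), when comparing $y_k(u)$ with $P_k y$ the jump bound needs the extra $H^{-1}$-in-space time derivative, and this propagates into the final estimate. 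Once the two pieces are assembled via the triangle inequality, the proposition follows; I would expect the write-up to be short modulo citing Lemma~\ref{jump_main_esti} and the trace/lifting inequalities.
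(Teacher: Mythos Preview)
Your decomposition $y_k(u)=y_{k,0}(u)+r_k$ with a harmonic lift $\eta_k$ is different from what the paper does, and as written it has a genuine gap. When you test the $r_k$-equation against $r_k-\eta_k$, the jump contribution you must control is
\[
\sum_{m=2}^M\big([\eta_k]_{m-1},(r_k-\eta_k)_m\big)
\;\le\;\Big(\sum_m k_m^{-1}\|[\eta_k]_{m-1}\|_{L^2(\Omega)}^2\Big)^{1/2}\|r_k-\eta_k\|_I,
\]
so you need the weighted sum $\sum_m k_m^{-1}\|[\eta_k]_{m-1}\|_{L^2(\Omega)}^2$. Passing through the harmonic extension this reduces to $\sum_m k_m^{-1}\|[\tilde P_k u]_{m-1}\|_{L^2(\Gamma)}^2$, and the double-integral argument of \eqref{pro_jum_est} bounds that sum only when the datum lies in $H^{1/2}(I;L^2(\Gamma))$; for $u\in H^{1/4}(I;L^2(\Gamma))$ the same computation gives merely $\sum_m\|[\tilde P_k u]_{m-1}\|^2\le Ck^{-1/2}\|u\|_{H^{1/4}}^2$, hence the $k_m^{-1}$-weighted sum blows up like $k^{-3/2}$. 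Your claimed bound with exponent $k_m^{-1/2}$ would not be enough either, since after Cauchy--Schwarz the companion factor would be $\big(\sum_m k_m^{1/2}\|(r_k-\eta_k)_m\|^2\big)^{1/2}$, which you do not control. Your final paragraph, where you suddenly speak of ``comparing $y_k(u)$ with $P_ky$'', is in fact a different argument that is inconsistent with the splitting you set up.

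The paper avoids this difficulty by taking $P_ky$ itself as the comparison function: since $P_ky|_{I\times\Gamma}=\tilde P_ku=y_k(u)|_{I\times\Gamma}$, the difference $P_ky-y_k(u)$ lies in $X_k^0$, and one may test the error equation \eqref{semi_pro1} directly with $\varphi=(P_ky-y_k(u))|_{I_m}$. After summing in $m$ and applying Cauchy--Schwarz, two ingredients close the estimate: (i) the jump bound $\sum_m\|[P_ky]_{m-1}\|^2\le C\|y\|_{H^{1/2}(I;L^2(\Omega))}^2$ from \eqref{pro_jum_est}, which is available precisely because $y$ (not $u$) has $H^{1/2}$-in-time regularity; and (ii) the already-proved $L^2$ error estimate of Theorem~\ref{error_semi_d}, which supplies a factor $k^{1/2}$ in $\|P_ky-y_k(u)\|_I$ that cancels the $k^{-1/2}$ coming from the weighted Cauchy--Schwarz. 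The $\|\partial_t y\|_{L^2(I;H^{-1}(\Omega))}$ term arises from the $\tilde g$ in \eqref{semi_pro1}, not from any generic jump lemma. In short, the right lifting is $P_ky$, and Theorem~\ref{error_semi_d} is an essential input, not an afterthought.
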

\begin{proof}
	For each fixed $m=1,\cdots,M$, we can rewrite the semi-discrete scheme (\ref{time_semi_state_e}) into the following time stepping scheme:
	\begin{eqnarray}\label{time_steeping_form}
		\begin{cases}
			&(\nabla y_{k,m}(u),\nabla\varphi)+\big(\frac{1}{k_m}y_{k,m}(u),\varphi\big)=\big(k_m\bar{f}_m+\frac{y_{k,m-1}(u)}{k_m},\varphi\big)\qquad\forall\,\varphi\in H^1_0(\Omega),\\ \nonumber
			&y_{k,0}(u)=y_0\ \hspace{1.4cm}\ \mbox{in}\,\ \Omega,\\
			&y_{k,m}(u)|_{\Gamma}=\tilde{P}^m_ku\hspace{0.7cm}\  \mbox{on}\,\  \Gamma,
		\end{cases}
	\end{eqnarray}
	where
	$\bar{f}_m=\frac{1}{k_m}\displaystyle{\int_{t_{m-1}}^{t_m}f(s)ds}$. Since $\tilde{P}^m_ku\in H^{\frac{1}{2}}(\Gamma)$ and $k_m\bar{f}_m+\frac{y_{k,m-1}(u)}{k_m}\in L^2(\Omega),$ we obtain $y_{k,m}(u)\in H^1(\Omega),\ m=1,\cdots,M$ by using the regularity of elliptic equations (cf. \cite{L. C. EVANS}).
	
	Observing that $P^m_ky-y_{k,m}(u)\in H^1_0(\Omega)$, we use Poincar\'e's inequality to estimate
	\begin{equation}\label{poiu}
		\begin{split}
			\Vert y_k(u)\Vert_{L^2(I;H^1(\Omega))}&\le \Vert P_ky\Vert_{L^2(I;H^1(\Omega))}+\Vert P_ky-y_k(u)\Vert_{L^2(I;H^1(\Omega))}\\
			&\le C\big(\Vert y\Vert_{L^2(I;H^1(\Omega))}+\Vert \nabla(P_ky-y_k(u))\Vert_{L^2(I;L^2(\Omega))}\big).
		\end{split}
	\end{equation}
	Hence, it suffices to estimate the second term of the above inequality. Choosing $\varphi|_{I_m}:=(P_ky-y_k(u))|_{I_m}$ as the test function in the identity (\ref{semi_pro1}) of Lemma \ref{jumping_estimate}, we have
	\begin{equation}
		k_m\Vert \nabla(P_ky-y_k(u))_m\Vert^2_{L^2(\Omega)}+\big([P_ky-y_k(u)]_{m-1},(P_ky-y_k(u))_m\big)=\langle\tilde{g},P_ky-y_k(u)\rangle_{L^2(I_m;H^{-1},H^1_0)}.\nonumber
	\end{equation}
Using the Cauchy-Schwarz inequality we obtain
	\begin{eqnarray*}
		&&k_m\left\Vert \nabla(P_ky-y_k(u))_m\right\Vert^2_{L^2(\Omega)}+\frac{1}{2}\big(\Vert[P_ky-y_k(u)]_{m-1}\Vert^2_{L^2(\Omega)}+\Vert(P_ky-y_k(u))_m\Vert^2_{L^2(\Omega)}\\
		&&-\Vert(P_ky-y_k(u))_{m-1}\Vert^2_{L^2(\Omega)}\big)
		\le\left\Vert\frac{P^m_ky-P^{m-1}_ky}{k_m}\right\Vert^2_{I_m}\left\Vert P_ky-y_k(u)\right\Vert^2_{I_m}\\
		&&+C\Vert\partial_t y\Vert_{L^2(I_m;H^{-1}(\Omega))}\Vert\nabla(P_ky-y_k(u))\Vert_{I_m}.
	\end{eqnarray*}
	Summing up the above inequality for $m=1,\cdots,M$ and applying the Cauchy-Schwarz inequality, we   derive from Theorem \ref{error_semi_d} and Lemma \ref{jump_main_esti} that
	\begin{eqnarray*}
		&&\sum\limits_{m=1}^Mk_m\left\Vert \nabla(P_ky-y_k(u))_m\right\Vert^2_{L^2(\Omega)}+\sum\limits_{m=1}^M\Vert[P_ky-y_k(u)]_{m-1}\Vert^2_{L^2(\Omega)}\\
		&\le& 2\Big(\sum\limits_{m=1}^Mk_m\left\Vert\frac{P^m_ky-P^{m-1}_ky}{k_m}\right\Vert^2_{I_m}\Big)^{\frac{1}{2}}\Big(\sum\limits_{m=1}^Mk^{-1}_m\Vert P_ky-y_k(u)\Vert^2_{I_m}\Big)^{\frac{1}{2}}+C\Vert\partial_t y\Vert^2_{L^2(I;H^{-1}(\Omega))}\\\nonumber
		&\le& Ck^{-\frac{1}{2}}\Big(\sum\limits_{m=1}^M\Vert [P_ky]_{m-1}\Vert^2_{L^2(\Omega)}\Big)^{\frac{1}{2}}\Big(\Vert P_ky-y\Vert_I+\Vert y-y_k(u)\Vert_{I}\Big)+C\Vert\partial_t y\Vert^2_{L^2(I;H^{-1}(\Omega))}\\
		&\le& C \Vert y\Vert_{H^{\frac{1}{2}}(I;L^2(\Omega)}\Big(\Vert y\Vert_{H^{\frac{1}{2}}(I;L^2(\Omega))}+\Vert u\Vert_{H^{\frac{1}{4}}(I;L^2(\partial\Omega))}+\Vert f\Vert_{L^2(I;L^2(\Omega))}\Big)+\Vert \partial_t y\Vert_{L^2(I;H^{-1}(\Omega))}^2\\
		&\le& C\Big(\left\Vert y\right\Vert^2_{H^{\frac{1}{2}}(I;L^2(\Omega))}+\left\Vert u\right\Vert^2_{H^{\frac{1}{4}}(I;L^2(\partial\Omega))}+\left\Vert f\right\Vert^2_{L^2(I;L^2(\Omega))}+\Vert \partial_t y\Vert_{L^2(I;H^{-1}(\Omega))}^2\Big).
	\end{eqnarray*}
	Combining (\ref{poiu})  with the above estimate we finish the proof.
    \end{proof}

	\subsection{Analysis of the spatial discretization error}
	
	When $\Omega$ is a convex polytope, we consider the spatial discretization error estimate for the solution of  (\ref{spatial_dicrete_state}) under the norm $\Vert\cdot\Vert_{L^2(I,L^2(\Omega))}$. For this purpose, we use the Aubin-Nitsche technique to convert the finite element estimate into projection errors. 
	
	We remark that the projection operators $P_k$ and $\tilde{P}_k$ defined respectively in (\ref{pro_function_d}) and (\ref{pro_boundary_condition}) can be extended such that $P_k:L^2(I;H^1(\Omega))\to X_k$ and $\tilde{P}_k:L^2(I;H^{\frac{1}{2}}(\Gamma))\to X_k(\Gamma)$ are well-defined. 
 {The following lemma establishes the relationships between these projections, which will be used in the error estimate of the spatial discretization.
		\begin{Lemma}\label{projection_operator}
			The projections  $P_k$, $\tilde{P}_k$, $P_h$, $\tilde{P}_h$, $P_{kh}$ and $\tilde{P}_{kh}$ in Definition \ref{defn_pprojection} satisfy
			\begin{equation}
				P_{kh}=P_kP_h,\quad \tilde{P}_{kh}=\tilde{P}_k\tilde{P}_h,\quad \gamma_0P_k=\tilde{P}_k\gamma_0,\nonumber
			\end{equation}
			where $\gamma_0:L^2(I;H^1(\Omega))\to L^2(I;H^{\frac{1}{2}}(\Gamma))$ is the trace operator. Particularly, when $ \omega\in L^2(I;H^1(\Omega))$ and $\tilde{\omega}\in L^2(I;H^{\frac{1}{2}}(\Gamma))$, there hold
			\begin{equation}
				P_{kh}\omega=P_hP_k\omega,\quad \tilde{P}_{kh}\tilde{\omega}=\tilde{P}_h\tilde{P}_k\tilde{\omega}.\nonumber
			\end{equation}
		\end{Lemma}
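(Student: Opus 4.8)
The plan is to verify each identity by testing against the appropriate finite-dimensional spaces and using the tensor-product structure of the time–space discrete spaces $X_{kh}$, $X_{kh}(\Gamma)$. Throughout I exploit that $X_{kh}$ consists of functions that are piecewise constant in time (on the partition $\{I_m\}$) and, on each $I_m$, lie in $V_h$; equivalently $X_{kh}=\tilde X_k\otimes V_h$ in the obvious sense, and similarly $X_{kh}(\Gamma)=\tilde X_k(\Gamma)\otimes V_h(\Gamma)$. Since all the projections in play are $L^2$-orthogonal projections onto these spaces, the strategy is: (i) show the composite operator $P_kP_h$ maps into $X_{kh}$; (ii) show it reproduces elements of $X_{kh}$; (iii) conclude $P_{kh}=P_kP_h$ by uniqueness of the orthogonal projection, characterized by $(w-P_{kh}w,v_{kh})_{L^2(I;L^2(\Omega))}=0$ for all $v_{kh}\in X_{kh}$. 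The same three-step template applies verbatim to $\tilde P_{kh}=\tilde P_k\tilde P_h$ with $\Gamma$ in place of $\Omega$ and $V_h(\Gamma)$ in place of $V_h$.

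First I would treat $P_{kh}=P_kP_h$. For the mapping property: $P_h w$ is, pointwise in time, an element of $V_h$, hence $P_hw\in L^2(I;V_h)$; applying $P_k$ (which acts only in time, replacing the function on $I_m$ by its time-average) produces a function that is constant in time on each $I_m$ and still valued in $V_h$, i.e. an element of $X_{kh}$. For the reproduction property: if $v_{kh}\in X_{kh}$ then $P_hv_{kh}=v_{kh}$ because $v_{kh}(t,\cdot)\in V_h$ for a.e.\ $t$, and then $P_k v_{kh}=v_{kh}$ because $v_{kh}$ is already piecewise constant in time; so $P_kP_h v_{kh}=v_{kh}$. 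Finally, orthogonality: for any $w\in L^2(I;L^2(\Omega))$ and $v_{kh}\in X_{kh}$, write $w-P_kP_hw=(w-P_hw)+(P_hw-P_kP_hw)$. The first bracket is $L^2(I;L^2(\Omega))$-orthogonal to $L^2(I;V_h)\supseteq X_{kh}$ by definition of $P_h$; the second bracket, $P_hw-P_k(P_hw)$, is the time-projection error of a function in $L^2(I;V_h)$, and is $L^2$-orthogonal to functions that are piecewise constant in time and valued in $V_h$ — precisely $X_{kh}$ — by definition of $P_k$ applied componentwise. Hence $P_kP_hw$ satisfies the defining property of $P_{kh}w$, and uniqueness gives the identity. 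The identity $\tilde P_{kh}=\tilde P_k\tilde P_h$ follows by the identical argument on $\Gamma$.

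Next, the commutation relation $\gamma_0 P_k=\tilde P_k\gamma_0$. Here I would use the \emph{explicit} formulas \eqref{pro_function_d} and \eqref{pro_boundary_condition}: both $P_k$ and $\tilde P_k$ are defined on $I_m$ by taking the time-average $\frac1{k_m}\int_{t_{m-1}}^{t_m}(\cdot)\,ds$. Since the trace operator $\gamma_0$ is linear, bounded $L^2(I;H^1(\Omega))\to L^2(I;H^{1/2}(\Gamma))$, and acts only in the spatial variable, it commutes with time-integration: for $\omega\in L^2(I;H^1(\Omega))$,
\[
\gamma_0\big(P_k\omega\big)\big|_{I_m}=\gamma_0\Big(\tfrac1{k_m}\!\int_{t_{m-1}}^{t_m}\!\omega(s)\,ds\Big)=\tfrac1{k_m}\!\int_{t_{m-1}}^{t_m}\!\gamma_0\omega(s)\,ds=\big(\tilde P_k\gamma_0\omega\big)\big|_{I_m},
\]
which is exactly the claim (one must note in passing that $P_k$ extends to $L^2(I;H^1(\Omega))$ mapping into $X_k$, as already remarked in the text, so that $\gamma_0 P_k\omega$ makes sense). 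Finally, the two "particularly" identities are immediate consequences: for $\omega\in L^2(I;H^1(\Omega))$, $P_{kh}\omega=P_kP_h\omega$ and one checks $P_kP_h=P_hP_k$ on such $\omega$ because $P_h$ and $P_k$ act on disjoint variables and commute (this is the standard fact that two orthogonal projections built from a tensor-product structure commute); the boundary identity $\tilde P_{kh}\tilde\omega=\tilde P_h\tilde P_k\tilde\omega$ is proved the same way. I do not expect any serious obstacle here; the only point requiring a little care is the domain-of-definition bookkeeping — ensuring $P_h$ is interpreted as the pointwise-in-time $L^2(\Omega)$-projection and that $P_k$ is legitimately extended to $L^2(I;H^1(\Omega))$ (resp.\ $\tilde P_k$ to $L^2(I;H^{1/2}(\Gamma))$) so that the traces and compositions are well defined — and checking that $P_h$, $P_k$ genuinely commute on $L^2(I;V_h)\cap L^2(I;H^1(\Omega))$, which is the mild "hard part" but follows from the tensor structure.
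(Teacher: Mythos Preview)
Your proof is correct and follows essentially the same approach as the paper: both establish each identity by verifying that the composite projection satisfies the defining orthogonality relation of $P_{kh}$ (respectively $\tilde P_{kh}$) against test functions in $X_{kh}$, then conclude by uniqueness. The minor differences are presentational---you explicitly check the mapping and reproduction properties and invoke the tensor-product commutation of $P_h$ and $P_k$ for the ``particularly'' identities, whereas the paper carries out a second direct orthogonality computation with the decomposition $(\omega-P_h\omega)+P_h(\omega-P_k\omega)$; and you spell out the commutation $\gamma_0 P_k=\tilde P_k\gamma_0$ via the explicit time-average formulas, which the paper simply declares ``obvious.''
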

		\begin{proof}
			We first prove $P_{kh}=P_kP_h$. For any given $\omega\in L^2(I;L^2(\Omega))$ and $v_{kh}\in X_{kh}$, there holds
			\begin{align*}
				\int_\Omega\int_0^T(\omega-P_kP_h\omega)v_{kh}dtdx&=\int_\Omega\int_0^T(P_k\omega-P_kP_h\omega)v_{kh}dtdx\\
				&=\sum\limits_{m=1}^M\int_\Omega\int_{I_m}P_k(\omega-P_h\omega)v_{kh}dtdx\\
				&=\sum\limits_{m=1}^M\int_\Omega\int_{I_m}(\omega-P_h\omega)dtv_{kh}dx\\
				&=(\omega-P_h\omega,v_{kh})_{L^2(I;L^2(\Omega))},
			\end{align*}
		    where we have used the orthogonality of $P_k$. Note that $X_{kh}\subseteq L^2(I;V_h)$, if follows from the definition of $P_h$ that the above identity is zero. By the definition of $P_{kh}$ and the uniqueness of $P_{kh}\omega$, we obtain $P_{kh}\omega=P_kP_h\omega$ for any $\omega\in L^2(I;L^2(\omega))$, i.e., $P_{kh}=P_kP_h$. Similarly, we can show that $\tilde{P}_{kh}=\tilde{P}_h\tilde{P}_k$.

			Now, we claim $P_{kh}\omega=P_hP_k\omega$ for any $\omega\in L^2(I;H^1(\Omega))$. For any $v_{kh}\in X_{kh}$, there holds
			\begin{align*}
				\int_\Omega\int_0^T(\omega-P_hP_k\omega)v_{kh}dtdx&=\int_\Omega\int_0^T[(\omega-P_h\omega)+P_h(\omega-P_k\omega)]v_{kh}dtdx\\
				&=\sum\limits_{m=1}^M\int_\Omega\int_{I_m}P_h(\omega-P_k\omega)v_{kh}dtdx\\
				&=\sum\limits_{m=1}^M\int_{I_m}\int_\Omega(\omega-P_k\omega)dxv_{kh}dt\\
				&=(\omega-P_k\omega,v_{kh})_{L^2(I;L^2(\Omega))},
			\end{align*}
			where the orthogonality of $P_h$ is used. 	By the orthogonality of $P_k$ and the inclusion $X_{kh}\subseteq X_k$, the above equality is zero. Using the definition of $P_{kh}$ and the uniqueness of $P_{kh}\omega$, we obtain  $P_{kh}\omega=P_hP_k\omega$ for any $\omega\in L^2(I;H^1(\Omega))$. Similar to the above process, there holds $\tilde{P}_{kh}\tilde{\omega}=\tilde{P}_h\tilde{P}_k\tilde{\omega}$ for any $\omega\in L^2(I;H^{\frac{1}{2}}(\Gamma))$. The relation
			$\gamma_0P_k=\tilde{P}_k\gamma_0$ is obvious. This completes the proof.
		\end{proof}


For any given $\omega\in L^2(I;H^1(\Omega))$, let $P_h\omega$ be the projection of $\omega$ defined in Definition \ref{defn_pprojection}. We split  $P_h\omega$ into
\begin{equation}
	P_h\omega=P^I_h\omega+P^B_h\omega,\quad P^I_h\omega\in L^2(I;V^0_h),\quad P^B_h\omega\in L^2(I;V^\partial_h),\nonumber
\end{equation}
where $L^2(I;V^0_h)$ and $L^2(I;V_h)$ are function spaces  defined in $I$, and valued in $V^0_h$ and $V_h$ respectively, while $V^0_h$ and $V^\partial_h$ are subspaces of $V_h$ spanned by interior node basis functions and boundary node basis functions, respectively.

Then we define a modified projection
\begin{equation}
		\hat{P}_h:L^2(I;H^1(\Omega))\to L^2(I;V_h),\quad
		\omega\mapsto\hat{P}_h\omega\nonumber
\end{equation}
such that $\hat{P}_h\omega:=P^I_h\omega+P^\partial_h\omega$, where $P_h^\partial\omega$ satisfies $P_h^\partial\omega\in L^2(I;V^\partial_h)$ and $P_h^\partial\omega|_{I\times\Gamma}:=\tilde{P}_h\gamma_0\omega$, i.e.,
\begin{equation}
	\big(\omega|_{I\times\Gamma}-P^\partial_h\omega|_{I\times\Gamma},v_h\big)_{L^2(I;L^2(\Gamma))}=0 \qquad \forall v_h\in L^2(I;V_h(\Gamma)).\nonumber
\end{equation}

Note that the above projection is obtained  by replacing the boundary components of the usual $L^2$-projection with  the boundary $L^2$-projection. Therefore, the orthogonality  is only valid on the boundary. In the following proposition, we study the projection error estimation of the temporal DG(0) discretization for parabolic equations.

\begin{Proposition}\label{approximation_prop}
	Let $y_k(u)\in L^2(I;H^1(\Omega))$ be the solution of \eqref{time_semi_state_e}. Then there holds
	\begin{equation}		
	\big\Vert y_k(u)-\hat{P}_hy_k(u)\big\Vert_I+h\big\Vert\nabla(y_k(u)-\hat{P}_hy_k(u))\big\Vert_I\le Ch\big(\Vert y_k(u)\Vert_{L^2(I;H^1(\Omega))}+\Vert u\Vert_{L^2(I;H^{\frac{1}{2}}(\Gamma))}\big).\nonumber
	\end{equation}
\end{Proposition}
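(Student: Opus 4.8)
The plan is to reduce the estimate to a purely spatial one on each time slab — which is possible because $y_k(u)$ is piecewise constant in time while $\hat P_h$ acts only in space and commutes with the temporal structure — and then to split the spatial error into the genuine $L^2$‑projection error plus a thin boundary‑layer correction supported near $\Gamma$.

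First I would record that on each $I_m$ one has $y_k(u)|_{I_m}=y_{k,m}(u)\in H^1(\Omega)$ with trace $\gamma_0 y_{k,m}(u)=\tilde P^m_k u\in H^{\frac{1}{2}}(\Gamma)$, and $\hat P_h y_k(u)|_{I_m}=\hat P_h y_{k,m}(u)$. Hence $\Vert y_k(u)-\hat P_h y_k(u)\Vert_I^2=\sum_{m}k_m\Vert y_{k,m}(u)-\hat P_h y_{k,m}(u)\Vert_{L^2(\Omega)}^2$ (and similarly for the gradient term), so it suffices to prove the slab estimate
\[
\Vert w-\hat P_h w\Vert_{L^2(\Omega)}+h\Vert\nabla(w-\hat P_h w)\Vert_{L^2(\Omega)}\le Ch\big(\Vert w\Vert_{H^1(\Omega)}+\Vert\gamma_0 w\Vert_{H^{\frac{1}{2}}(\Gamma)}\big)\qquad\forall w\in H^1(\Omega),
\]
then multiply by $k_m$, sum over $m$, and use $\sum_{m}k_m\Vert y_{k,m}(u)\Vert_{H^1(\Omega)}^2=\Vert y_k(u)\Vert_{L^2(I;H^1(\Omega))}^2$ together with $\sum_{m}k_m\Vert\tilde P^m_k u\Vert_{H^{\frac{1}{2}}(\Gamma)}^2=\Vert\tilde P_k u\Vert_{L^2(I;H^{\frac{1}{2}}(\Gamma))}^2\le\Vert u\Vert_{L^2(I;H^{\frac{1}{2}}(\Gamma))}^2$, the last step because $\tilde P_k$ is the time‑averaging orthogonal projection and hence a contraction on $L^2(I;H^{\frac{1}{2}}(\Gamma))$.

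For the slab estimate I would split, using $\hat P_h w=P^I_h w+P^\partial_h w$ and $P_h w=P^I_h w+P^B_h w$,
\[
w-\hat P_h w=(w-P_h w)+(P^B_h w-P^\partial_h w)=:(w-P_h w)+\theta_h.
\]
Since $w$ is only $H^1$, for the first summand I would invoke the best‑approximation property of the $L^2$‑projection together with the Clément interpolant $\pi_h$: $\Vert w-P_h w\Vert_{L^2(\Omega)}\le\Vert w-\pi_h w\Vert_{L^2(\Omega)}\le Ch\Vert w\Vert_{H^1(\Omega)}$, and, via an inverse estimate, $\Vert\nabla(w-P_h w)\Vert_{L^2(\Omega)}\le\Vert\nabla(w-\pi_h w)\Vert_{L^2(\Omega)}+Ch^{-1}\Vert\pi_h w-P_h w\Vert_{L^2(\Omega)}\le C\Vert w\Vert_{H^1(\Omega)}$, so $\Vert w-P_h w\Vert_{L^2(\Omega)}+h\Vert\nabla(w-P_h w)\Vert_{L^2(\Omega)}\le Ch\Vert w\Vert_{H^1(\Omega)}$. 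For $\theta_h$, observe that $\theta_h\in V^\partial_h$ is supported in the $O(h)$‑wide strip adjacent to $\Gamma$ and is determined by its boundary trace, which yields the scaled inverse estimates $\Vert\theta_h\Vert_{L^2(\Omega)}\le Ch^{\frac{1}{2}}\Vert\theta_h\Vert_{L^2(\Gamma)}$ and $\Vert\nabla\theta_h\Vert_{L^2(\Omega)}\le Ch^{-\frac{1}{2}}\Vert\theta_h\Vert_{L^2(\Gamma)}$. Since interior‑node functions vanish on $\Gamma$, $\theta_h|_\Gamma=\gamma_0 P_h w-\tilde P_h\gamma_0 w$, so
\[
\Vert\theta_h\Vert_{L^2(\Gamma)}\le\Vert\gamma_0(P_h w-w)\Vert_{L^2(\Gamma)}+\Vert\gamma_0 w-\tilde P_h\gamma_0 w\Vert_{L^2(\Gamma)}.
\]
The first term I would bound by the multiplicative trace inequality $\Vert v\Vert_{L^2(\Gamma)}^2\le C\Vert v\Vert_{L^2(\Omega)}\Vert v\Vert_{H^1(\Omega)}$ with $v=P_h w-w$ and the two bounds just obtained, giving $\Vert\gamma_0(P_h w-w)\Vert_{L^2(\Gamma)}\le Ch^{\frac{1}{2}}\Vert w\Vert_{H^1(\Omega)}$; the second is the $O(h^{\frac{1}{2}})$ approximation property of the boundary $L^2$‑projection on the quasi‑uniform boundary mesh, $\Vert\gamma_0 w-\tilde P_h\gamma_0 w\Vert_{L^2(\Gamma)}\le Ch^{\frac{1}{2}}\Vert\gamma_0 w\Vert_{H^{\frac{1}{2}}(\Gamma)}$. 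Combining, $\Vert\theta_h\Vert_{L^2(\Omega)}+h\Vert\nabla\theta_h\Vert_{L^2(\Omega)}\le Ch^{\frac{1}{2}}\Vert\theta_h\Vert_{L^2(\Gamma)}\le Ch\big(\Vert w\Vert_{H^1(\Omega)}+\Vert\gamma_0 w\Vert_{H^{\frac{1}{2}}(\Gamma)}\big)$, and the triangle inequality closes the slab estimate.

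The main obstacle is the control of the non‑conforming boundary correction $\theta_h=P^B_h w-P^\partial_h w$. Two ingredients make it work and must be set up with care: an $O(h^{\frac{1}{2}})$ bound for the \emph{trace} of the $L^2$‑projection error that avoids any use of $H^2$‑regularity of $w$ (so it has to come from the multiplicative trace inequality, not from duality), and the boundary‑strip inverse estimates that recover the missing half power of $h$ in $\Vert\theta_h\Vert_{L^2(\Omega)}$ and $\Vert\nabla\theta_h\Vert_{L^2(\Omega)}$ at the price of $\Vert\theta_h\Vert_{L^2(\Gamma)}$. Everything else — the reduction in time, the best‑approximation/inverse‑estimate bookkeeping for $w-P_h w$, and the contractivity of $\tilde P_k$ on $L^2(I;H^{\frac{1}{2}}(\Gamma))$ — is routine.
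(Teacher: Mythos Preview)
Your proposal is correct and follows essentially the same route as the paper. The paper works directly in the time–space norms rather than reducing to a slab estimate, but the decomposition $w-\hat P_hw=(w-P_hw)+(P^B_hw-P^\partial_hw)$ and the two key ingredients are identical: the $O(h^{1/2})$ gain from the boundary support of $\theta_h$ (which the paper derives by explicit nodal-basis scaling, yielding exactly your inverse estimates $\Vert\theta_h\Vert_{L^2(\Omega)}\le Ch^{1/2}\Vert\theta_h\Vert_{L^2(\Gamma)}$ and $\Vert\nabla\theta_h\Vert_{L^2(\Omega)}\le Ch^{-1/2}\Vert\theta_h\Vert_{L^2(\Gamma)}$), and the $O(h^{1/2})$ bound for $\Vert\theta_h\Vert_{L^2(\Gamma)}$ via the scaled/multiplicative trace inequality on $P_hw-w$ together with the approximation property of $\tilde P_h$. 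The only cosmetic difference is that the paper handles the boundary term by writing $\tilde P_ku-\tilde P_h\tilde P_ku=\tilde P_k(u-\tilde P_hu)$ (Lemma~\ref{projection_operator}) and then uses the $L^2$-contractivity of $\tilde P_k$, whereas you use the contractivity of $\tilde P_k$ on $L^2(I;H^{1/2}(\Gamma))$ after summing; both are valid.
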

\begin{proof}
	By the error estimate of the $L^2$-projection $P_h$, we have
	\begin{eqnarray*}
		&&\Vert y_k(u)-\hat{P}_hy_k(u)\Vert_I+h\Vert\nabla(y_k(u)-\hat{P}_hy_k(u))\Vert_I\\
		&\le& \big(\Vert y_k(u)-P_hy_k(u)\Vert_I+h\Vert\nabla(y_k(u)-P_hy_k(u))\Vert_I\big)\\
		&&+\Vert P_hy_k(u)-\hat{P}_hy_k(u)\Vert_I+h\Vert\nabla(P_hy_k(u)-\hat{P}_hy_k(u))\Vert_I\\
		&\le& Ch\Vert y_k(u)\Vert_{L^2(I;H^1(\Omega))}+\Vert P_hy_k(u)-\hat{P}_hy_k(u)\Vert_I+h\Vert\nabla(P_hy_k(u)-\hat{P}_hy_k(u))\Vert_I.
	\end{eqnarray*}
Then we estimate the latter two terms of the above inequality as follows:
	\begin{eqnarray*}
		&&\Vert P_hy_k(u)-\hat{P}_hy_k(u)\Vert^2_I+h^2\Vert\nabla(P_hy_k(u)-\hat{P}_hy_k(u))\Vert^2_I\\
		&=&\sum\limits_{m=1}^Mk_m\Vert P^m_hy_k(u)-\hat{P}^m_hy_k(u)\Vert^2_{L^2(\Omega)}+h^2\sum\limits_{m=1}^Mk_m\Vert\nabla(P^m_hy_k(u)-\hat{P}^m_hy_k(u))\Vert^2_{L^2(\Omega)}\\
		&=&\sum\limits_{m=1}^Mk_m\sum\limits_{j=1}^{M_h}\big(P^m_hy_k(u)(x_j)-\hat{P}^m_hy_k(u)(x_j)\big)^2\Vert\varphi_j\Vert^2_{L^2(\Omega)}\\
		&&+h^2\sum\limits_{m=1}^M\!k_m\!\!\sum\limits_{j=1}^{M_h}\big(P^m_hy_k(u)(x_j)-\hat{P}^m_hy_k(u)(x_j)\big)^2\Vert\nabla\varphi_j\Vert^2_{L^2(\Omega)}\\
		&\leq& Ch^N\sum\limits_{m=1}^Mk_m\sum\limits_{j=1}^{M_h}\big(P^m_hy_k(u)(x_j)-\hat{P}^m_hy_k(u)(x_j)\big)^2\\
		&\leq& Ch\sum\limits_{m=1}^Mk_m\Vert P^m_hy_k(u)-\hat{P}^m_hy_k(u)\Vert^2_{L^2(\Gamma)}\\
		&=&Ch\Vert P_hy_k(u)-\hat{P}_hy_k(u)\Vert^2_{L^2(I;L^2(\Gamma))},
	\end{eqnarray*}
	where $\hat{P}^m_hy_k(u):=\hat{P}_hy_k(u)|_{I_m},\,m=1,\cdots,M$, and $\{\varphi_j\}_{j=1}^{M_h}$ is a family of basis functions corresponding to the boundary nodes $\{x_j\}_{j=1}^{M_h}$. Subsequently,
	\begin{eqnarray*}
		&&Ch^{\frac{1}{2}}\Vert P_hy_k(u)-\hat{P}_hy_k(u)\Vert_{L^2(I;L^2(\Gamma))}\\
		&\le& Ch^{\frac{1}{2}}\Vert P_hy_k(u)-y_k(u)\Vert_{L^2(I;L^2(\Gamma))}+Ch^{\frac{1}{2}}\Vert y_k(u)-\hat{P}_hy_k(u)\Vert_{L^2(I;L^2(\Gamma))}\\
		&\le& C\Big(\Vert y_k(u)-P_hy_k(u)\Vert_{L^2(I;L^2(\Omega))}+h\Vert y_k(u)-P_hy_k(u)\Vert_{L^2(I;H^1(\Omega))}\\
		&&+h^{\frac{1}{2}}\Vert \tilde{P}_ku-\tilde{P}_h\tilde{P}_ku\Vert_{L^2(I;L^2(\Gamma))}\Big)\\
		&\le& Ch\Vert y_k(u)\Vert_{L^2(I;H^1(\Omega))}+Ch^{\frac{1}{2}}\Vert \tilde{P}_k(u-\tilde{P}_hu)\Vert_{L^2(I;L^2(\Gamma))}\\
		&\le& Ch\Vert y_k(u)\Vert_{L^2(I;H^1(\Omega))}+Ch^{\frac{1}{2}}\Vert u-\tilde{P}_hu\Vert_{L^2(I;L^2(\Gamma))}\\
		&\le&  Ch\big(\Vert y_k(u)\Vert_{L^2(I;H^1(\Omega))}+\Vert u\Vert_{L^2(I;H^{\frac{1}{2}}(\Gamma))}\big),
	\end{eqnarray*}
	where we have used Lemma \ref{projection_operator}, the trace theorem, the stability and approximation properties of projection operators. This completes the proof.
   \end{proof}
	
	The fully discrete scheme (\ref{spatial_dicrete_state}) for parabolic equations is called the DG(0)-CG(1) scheme, which can be viewed as the spatial discretization of (\ref{time_semi_state_e}). 
Since $X^0_{kh}\subseteq X^0_k$, there holds the following orthogonality relation:
		\begin{eqnarray}
		B(y_k(u)-y_{kh}(u),\varphi_{kh})=B(y-y_{kh}(u),\varphi_{kh})=0 \qquad\forall\varphi_{kh}\in X^0_{kh}.\nonumber
		\end{eqnarray}

	\begin{Theorem}\label{error_space_esti}
		Let $y_k(u)\in L^2(I;H^1(\Omega))$ and $y_{kh}(u)\in X_{kh}$ be the solution of \eqref{time_semi_state_e} and \eqref{spatial_dicrete_state}, respectively. Then we have 
		\begin{equation}
			\Vert y_k(u)-y_{kh}(u)\Vert_{L^2(I;L^2(\Omega))}\le Ch\big(\Vert y_k(u)\Vert_{L^2(I;H^1(\Omega))}+\Vert u\Vert_{L^2(I;H^{\frac{1}{2}}(\Gamma))}\big).\nonumber
		\end{equation}
	\end{Theorem}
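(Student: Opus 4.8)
The plan is to use the Aubin--Nitsche duality argument, exactly mirroring the proof of Theorem~\ref{error_semi_d}, but now with the DG(0) temporal space $X^0_k$ replaced by the DG(0)-CG(1) space $X^0_{kh}$, and with the modified projection $\hat P_h$ from Proposition~\ref{approximation_prop} playing the role that $P_k$ played there. Set $e_{kh}:=y_k(u)-y_{kh}(u)$ and let $z\in L^2(I;H^2(\Omega)\cap H^1_0(\Omega))\cap H^1(I;L^2(\Omega))$ be the solution of the backward equation \eqref{T_back_eq} with $g:=e_{kh}$; note $e_{kh}\in X_k$ is admissible as right-hand side, and the convexity/regularity of $\Omega$ gives this $H^2$-in-space regularity for $z$. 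First I would write
\begin{align*}
\Vert e_{kh}\Vert_I^2 &= (e_{kh},-\partial_t z-\Delta z)_I = B(e_{kh},z) = B(e_{kh},z-\hat P_h z),
\end{align*}
where in the last equality I use the Galerkin orthogonality $B(y_k(u)-y_{kh}(u),\varphi_{kh})=0$ for all $\varphi_{kh}\in X^0_{kh}$, noting that the interior part $P_h^I z\in L^2(I;V_h^0)$ of $\hat P_h z$ lies in $X^0_{kh}$ after also composing with $P_k$ in time — so more precisely I would orthogonalize against $P_k\hat P_h z$ (or equivalently use that $B(\cdot,\cdot)$ on piecewise-constant-in-time arguments only sees the time averages). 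The boundary part $P_h^\partial z$ vanishes on $\Sigma_T$ only up to $\tilde P_h\gamma_0 z$, and since $\gamma_0 z=0$ for the homogeneous Dirichlet problem \eqref{T_back_eq}, in fact $P_h^\partial z=0$, so $\hat P_h z = P_h^I z\in L^2(I;V_h^0)$; this is the point that makes the argument clean.

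Next I would expand $B(e_{kh},z-\hat P_h z)$ using the dual representation \eqref{dbilinearF}, so that the derivative falls on $z-\hat P_h z$: the terms become $(\nabla e_{kh},\nabla(z-\hat P_h z))_I$ together with time-derivative and jump terms paired against $z-\hat P_h z$ and its time-projection error. The spatial term is controlled by $\Vert\nabla e_{kh}\Vert_I\cdot h\Vert z\Vert_{L^2(I;H^2)}$; but $\Vert\nabla e_{kh}\Vert_I$ is not directly small, so instead I would further split $\nabla(z-\hat P_h z)$ and move one $h$ onto $z$ while keeping $\Vert\nabla e_{kh}\Vert_I\le \Vert\nabla y_k(u)\Vert_I+\Vert\nabla y_{kh}(u)\Vert_I$ bounded via an $H^1$-stability bound for $y_{kh}(u)$ analogous to Proposition~\ref{semi_regularity} (using the discrete scheme \eqref{spatial_dicrete_state} and $\hat P_h$ as lift of the boundary data). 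The projection error $\Vert z-\hat P_h z\Vert_I + h\Vert\nabla(z-\hat P_h z)\Vert_I\le Ch\Vert z\Vert_{L^2(I;H^2(\Omega))}$ follows from Lemma~\ref{RitePE}-type bounds for $P_h^I$ together with the fact that $z$ has zero trace; and $\Vert z\Vert_{L^2(I;H^2(\Omega))}+\Vert z\Vert_{H^1(I;L^2(\Omega))}\le C\Vert e_{kh}\Vert_I$ by the parabolic regularity estimate (Lemma~\ref{back_regularity} with $r=0$). The jump terms in $B(e_{kh},\cdot)$ and the time-projection errors are handled exactly as $I_3$ in Theorem~\ref{error_semi_d} was, using a jump estimate of the form $\sum_m k_m^{-1}\Vert[y_{kh}(u)]_{m-1}\Vert^2_{L^2(\Omega)}\le C(\ldots)$ that in turn follows from Proposition~\ref{semi_regularity}-style arguments and the approximation properties of $\hat P_h$; combining everything and absorbing $\Vert e_{kh}\Vert_I$ from the right-hand side yields the claimed bound $Ch(\Vert y_k(u)\Vert_{L^2(I;H^1(\Omega))}+\Vert u\Vert_{L^2(I;H^{1/2}(\Gamma))})$.

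The main obstacle, I expect, is the bookkeeping around the nonhomogeneous discrete Dirichlet data: $y_k(u)$ and $y_{kh}(u)$ do not have the same boundary values on $\Sigma_T$ (one equals $\tilde P_k u$, the other $\tilde P_{kh}u=\tilde P_h\tilde P_k u$ by Lemma~\ref{projection_operator}), so $e_{kh}\notin L^2(I;H^1_0(\Omega))$ and the duality identity $B(e_{kh},z)=(e_{kh},g)_I$ picks up a boundary contribution $-\int_{\Sigma_T}(\tilde P_k u-\tilde P_{kh}u)\partial_n z\,ds\,dt$ when one integrates by parts against the $H^1_0$-function $z$ — wait, actually since $z$ has zero trace this particular boundary term vanishes, but the analogous term appears when one replaces $z$ by $\hat P_h z$ and uses the scheme; this is precisely why the modified projection $\hat P_h$ (which matches the boundary $L^2$-projection) rather than $P_h$ must be used, and why Proposition~\ref{approximation_prop} was set up. The second delicate point is establishing the $H^1$-stability and the jump bound for the fully discrete $y_{kh}(u)$ with only $H^{1/2}$-regular boundary data; this is a discrete analogue of Proposition~\ref{semi_regularity}, carried out with $\hat P_h y_k(u)$ as comparison function, and requires care because $y_{kh}(u)-\hat P_h y_k(u)$ lies in $L^2(I;V_h^0)$ and one can test the scheme against it, reducing matters to Proposition~\ref{approximation_prop} and the already-established semi-discrete estimates.
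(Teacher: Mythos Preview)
Your overall duality strategy is correct, but the execution differs substantially from the paper's and contains one concrete error plus two missing ingredients that the paper's route avoids entirely.

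\textbf{The concrete error.} You write $\Vert e_{kh}\Vert_I^2=(e_{kh},-\partial_t z-\Delta z)_I=B(e_{kh},z)$ and then, in your ``obstacle'' paragraph, dismiss the boundary term ``since $z$ has zero trace''. That reasoning is wrong: integrating $-(e_{kh},\Delta z)_I$ by parts produces $-(e_{kh}|_\Gamma,\partial_n z)_{L^2(I;L^2(\Gamma))}$, which depends on $\partial_n z$, not on $\gamma_0 z$. Since $e_{kh}|_\Gamma=\tilde P_k u-\tilde P_{kh}u=\tilde P_k u-\tilde P_h\tilde P_k u\neq 0$ in general, this term is present. It \emph{can} be controlled (using orthogonality of $\tilde P_h$ to insert $\partial_n z-\tilde P_h\partial_n z$ and then $h^{1/2}\cdot h^{1/2}$), but you have not done so, and your stated justification for dropping it is incorrect.

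\textbf{How the paper proceeds instead.} The paper does not work with the continuous dual $z$ directly. It splits $e_h=\eta_h+\zeta_h$ with $\eta_h:=y_k(u)-\hat P_h y_k(u)$ and $\zeta_h:=\hat P_h y_k(u)-y_{kh}(u)\in X^0_{kh}$, handles $(e_h,\eta_h)$ trivially by Proposition~\ref{approximation_prop}, and for $(e_h,\zeta_h)$ uses the \emph{fully discrete} dual $z_{kh}\in X^0_{kh}$, writing $-(e_h,\zeta_h)=-B(\zeta_h,z_{kh})=B(\eta_h,z_{kh})$. The point is that every factor in the resulting expansion is either $\eta_h$ (whose $L^2$ and $H^1$ norms are already bounded by Proposition~\ref{approximation_prop} in terms of $\Vert y_k(u)\Vert_{L^2(I;H^1)}$ and $\Vert u\Vert_{L^2(I;H^{1/2}(\Gamma))}$) or a quantity like $\Vert\Delta z_k\Vert_I$, $\Vert\nabla(z_k-z_{kh})\Vert_I$, $\sum_m k_m^{-1}\Vert[z_{kh}]_m\Vert^2$, all of which are controlled by $\Vert e_h\Vert_I$ via the Meidner--Vexler estimates for the \emph{homogeneous} discrete adjoint problem. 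No $H^1$-stability of $y_{kh}(u)$ and no jump estimate for $[y_{kh}(u)]_{m-1}$ are needed.

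\textbf{What your route would still require.} Your argument, even after fixing the boundary-term error, needs (i) an $H^1$-stability bound $\Vert\nabla y_{kh}(u)\Vert_I\le C(\text{data})$ and (ii) a jump estimate $\sum_m k_m^{-1}\Vert[y_{kh}(u)]_{m-1}\Vert^2\le C(\text{data})$, i.e.\ fully discrete analogues of Proposition~\ref{semi_regularity} and Lemma~\ref{jump_main_esti}. These are plausible but are genuinely additional lemmas, not consequences of what is already available; the paper's decomposition is designed precisely to sidestep them.
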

	\begin{proof}
	Let $e_h:=y_k(u)-y_{kh}(u)=\eta_h+\zeta_h$, $\eta_h:=y_k(u)-\hat{P}_hy_k(u)$, $\zeta_h:=\hat{P}_hy_k(u)-y_{kh}(u)$. Then 
		\begin{equation}\label{Ine_1}
			\Vert e_h\Vert^2_{L^2(I;L^2(\Omega))}=(e_h,\eta_h)_{L^2(I;L^2(\Omega))}+(e_h,\zeta_h)_{L^2(I;L^2(\Omega))}.
		\end{equation}
		By using the Cauchy-Schwarz inequality, it suffices to estimate the second term. To this end, we use the Aubin-Nitsche technique and let $z$ be the solution of (\ref{T_back_eq}) with $g:=e_h$.
		
		Let $z_k\in X^0_k$ and $z_{kh}\in X^0_{kh}$ be the temporal semi-discretization and fully discretization of the  equation (\ref{T_back_eq}). By Lemma \ref{projection_operator}, there holds $\zeta_h\in X^0_{kh}$. Note that $X^0_{kh}\subseteq X^0_k$, then
				\begin{eqnarray*}
			&&-(e_h,\zeta_h)_{L^2(I;L^2(\Omega))}=-B(\zeta_h,z_{kh})
			=B(\eta_h,z_{kh})\\
			&=&(\nabla\eta_h,\nabla z_{kh})_I-\sum_{m=1}^{M-1}(\eta^-_{h,m},[z_{kh}]_m)+(\eta^-_{h,M},z^-_{kh,M})\\
			&=&(\nabla\eta_h,\nabla z_k)_I-(\nabla\eta_h,\nabla (z_k-z_{kh}))_I-\sum_{m=1}^M(\eta^-_{h,m},[z_{kh}]_m)\\
			&=&-(\eta_h,\Delta z_k)_I+(\eta_h,\partial_n z_k)_{L^2(I;L^2(\Gamma))}-(\nabla\eta_h,\nabla (z_k-z_{kh}))_I-\sum_{m=1}^M(\eta^-_{h,m},[z_{kh}]_m)\\
			&=&-(\eta_h,\Delta z_k)_I+(u-\tilde{P}_hu,\partial_n z_k-\tilde{P}_h\partial_n z_k)_{L^2(I;L^2(\Gamma))}-(\nabla\eta_h,\nabla (z_k-z_{kh}))_I-\sum_{m=1}^M(\eta^-_{h,m},[z_{kh}]_m),
		\end{eqnarray*}
		where $[z_{kh}]_M=-z^-_{kh,M}$, and we have used $(\eta_h,\partial_n z_k)_{L^2(I;L^2(\Gamma))}=(u-\tilde{P}_hu,\partial_n z_k-\tilde{P}_h\partial_n z_k)_{L^2(I;L^2(\Gamma))}$. In fact, it follows from Lemma \ref{projection_operator} and the orthogonality of the projection $\tilde{P}_h$ that
		\begin{align*}
			(\eta_h,\partial_n z_k)_{L^2(I;L^2(\Gamma))}&=(y_k(u)-\tilde{P}_hy_k(u),\partial_n z_k)_{L^2(I;L^2(\Gamma))}\\
			&=(\tilde{P}_ku-\tilde{P}_h\tilde{P}_ku,\partial_n z_k)_{L^2(I;L^2(\Gamma))}\\
			&=(\tilde{P}_k(u-\tilde{P}_hu),\partial_n z_k)_{L^2(I;L^2(\Gamma))}\\
			&=(u-\tilde{P}_hu,\partial_n z_k-\tilde{P}_h\partial_n z_k)_{L^2(I;L^2(\Gamma))}.
		\end{align*}
		Hence,
		\begin{align}\label{Ine_3}\notag
			\vert (e_h,\zeta_h)_{L^2(I;L^2(\Omega))}\vert&\le\Vert\eta_h \Vert_I\Vert\Delta z_k\Vert_I+\Vert u-\tilde{P}_hu\Vert_{L^2(I;L^2(\Gamma))}\Vert(I-\tilde{P}_h)\partial_n z_k\Vert_{L^2(I;L^2(\Gamma))}\\\notag
			&+\left\Vert\nabla\eta_h\Vert_I\right\Vert\nabla (z_k-z_{kh})\Vert_I+\Big(\sum_{m=1}^Mk_m\Vert\eta_{h,m}\Vert^2_{L^2(\Omega)}\Big)^{\frac{1}{2}}\Big(\sum_{m=1}^Mk^{-1}_m\Vert[z_{kh}]_m\Vert^2_{L^2(\Omega)}\Big)^{\frac{1}{2}}\\
			&\le C\Big(\Vert\eta_h \Vert_I+h^{\frac{1}{2}}\Vert u-\tilde{P}_hu\Vert_{L^2(I;L^2(\Gamma))}+h\Vert\nabla\eta_h\Vert_I+\Vert\eta_h\Vert_I\Big)\Vert e_h\Vert_I.\notag
		\end{align}
	 Here  we have used the following three estimates:
		\begin{align*}
			\Vert\nabla (z_k-z_{kh})\Vert_I&\le Ch\Vert e_h\Vert_I,\\
			\big\Vert(I-\tilde{P}_h)\partial_n z_k\big\Vert_{L^2(I;L^2(\Gamma))}&\le Ch^{\frac{1}{2}}\Vert e_h\Vert_I,\\
			\Vert\Delta z_k\Vert_I+\Vert z_k\Vert_I+\Big(\sum_{m=1}^Mk^{-1}_m\Vert[z_{kh}]_m\Vert^2_{L^2(\Omega)}\Big)^{\frac{1}{2}}&\le C\Vert e_h \Vert_I,
		\end{align*}
	where the third one can be found in \cite[Corollary 4.2 and 4.7]{MeidnerVexler}. Therefore, we only verify the first two estimates. The first estimate can be derived by using the inverse estimate and the projection error estimate:
		\begin{align*}
			\Vert\nabla (z_k-z_{kh})\Vert_I&\le\Vert\nabla (z_k-P_hz_k)\Vert_I+Ch^{-1}\Vert P_hz_k-z_{kh}\Vert_I\\
			&\le Ch\Vert\nabla^2 z_k\Vert_I+Ch^{-1}(\Vert P_hz_k-z_k\Vert_I+\Vert z_k-z_{kh}\Vert_I)\\
			&\le Ch\Vert z_k\Vert_{L^2(I;H^2(\Omega))}\\
			&\le Ch(\Vert z_k\Vert_I\!+\Vert\Delta z_k\Vert_I)\\
			&\le Ch\Vert e_h\Vert_I,
		\end{align*}
	where the error estimate and the stability of the temporal semi-discretization (cf. \cite[Theorem 5.1 and Corollary 4.2]{MeidnerVexler}) have been applied. Again, the second estimate can be verified by the stability of the temporal semi-discretization (cf. \cite[Corollary 4.2]{MeidnerVexler})
		\begin{align*}
			\big\Vert(I-\tilde{P}_h)\partial_n z_k\big\Vert_{L^2(I;L^2(\Gamma))}&\le Ch^{\frac{1}{2}}\Vert\partial_n z_k\Vert_{L^2(I;H^{\frac{1}{2}}(\Gamma))}
			\le Ch^{\frac{1}{2}}\Vert z_k\Vert_{L^2(I;H^2(\Omega))}
			\le Ch^{\frac{1}{2}}\Vert e_h\Vert_I.
		\end{align*}
		Finally, combining (\ref{Ine_1}), Proposition \ref{approximation_prop}, the Cauchy-Schwarz and Young's inequalities, and the projection error estimate  completes the proof.
	\end{proof}
\begin{Remark}
Combining Theorems \ref{error_semi_d} and \ref{error_space_esti} we obtain a priori error estimate $O(h+k^{1\over 2})$ for  the fully discretization of parabolic equations with inhomogeneous Dirichlet data posed on polytopes, which improves the result in \cite{DAFrenchandJTKing1993} by removing the mesh size condition $k=O(h^2)$. 
\end{Remark}	
	
	\section{Error estimates for the optimal control problem}
		\setcounter{equation}{0}

	    In this section, we give the main results of this article, namely, the error estimates between the solutions of the continuous optimal control problem and discrete optimal control problems. The first subsection gives the optimal order of convergence for the temporal semi-discrete optimal control problem \eqref{time_semi_control} in the case that $\Omega$ is smooth or polytopal, and the second subsection is devoted to the error estimate between the continuous problem \eqref{BC_Functioal} and fully discrete optimal control problem \eqref{space_semi_control} in the case that $\Omega$ is polytopal.
	
\subsection{Estimates for the semi-discrete adjoint state}

	\begin{Lemma}\label{QQ1}
Let $\bar{z}\in L^2(I;H^2(\Omega)\cap H^1_0(\Omega))\cap H^1(I;L^2(\Omega))$ be the adjoint state of the optimal control problem \eqref{BC_Functioal} and  $\hat{z}_k$ be its semi-discrete finite element solution, i.e., $\hat{z}_k\in X_k^0$ satisfies
\begin{equation}\label{EE10}
B(\varphi_k,\hat{z}_k)=(\bar{y}-y_d,\varphi_k)_I\qquad \forall\varphi_k\in X^0_k.
\end{equation}%
Let $\pi^r_k\bar{z}\in X_k$ be the pointwise in time interpolation of $\bar{z}$  defined by \eqref{def_interpolation}. Then we have 
\begin{equation}\label{EE11}
\Vert \nabla(\bar{z}-\hat{z}_k)\Vert_{I}\le C\left\Vert \nabla\left(\bar{z}-\pi^r_k\bar{z}\right)\right\Vert_{I}.
\end{equation}
    \end{Lemma}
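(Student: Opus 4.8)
The plan is to prove this by a standard C\'ea‑type argument for the DG(0) scheme: combine a discrete gradient‑coercivity of $B$ on $X_k$ with the Galerkin orthogonality of the error, and then absorb the non‑conforming (jump/trace) terms so that only the interpolation error $\|\nabla(\bar z-\pi^r_k\bar z)\|_I$ survives on the right. First I would record the orthogonality: $\hat z_k$ satisfies \eqref{EE10}, and $\bar z$ satisfies the \emph{same} identity against $X^0_k$. Indeed, using the dual representation \eqref{dbilinearF} of $B$ with the regularity $\bar z\in C(\bar I;H^1(\Omega))$ (so all jump terms $[\bar z]_m$ vanish) and $\bar z(T)=0$ (so the terminal term drops), then integrating the Laplacian by parts in space and invoking \eqref{Conjugate_EQ}, one gets $B(\varphi_k,\bar z)=(\bar y-y_d,\varphi_k)_I$ for all $\varphi_k\in X^0_k$. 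Subtracting gives $B(\varphi_k,\bar z-\hat z_k)=0$ for all $\varphi_k\in X^0_k$.

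Next, the discrete coercivity: for $v_k\in X_k$ the first sum in \eqref{bilinearF} drops, and the identity $2(v_{k,m}-v_{k,m-1},v_{k,m})=\|v_{k,m}\|^2-\|v_{k,m-1}\|^2+\|[v_k]_{m-1}\|^2$ yields
\begin{equation*}
B(v_k,v_k)=\|\nabla v_k\|_I^2+\tfrac12\|v^-_{k,M}\|_{L^2(\Omega)}^2+\tfrac12\|v^+_{k,0}\|_{L^2(\Omega)}^2+\tfrac12\sum_{m=2}^M\|[v_k]_{m-1}\|_{L^2(\Omega)}^2\ \ge\ \|\nabla v_k\|_I^2 .
\end{equation*}
In particular $B$ controls on $X_k$ not only the gradient norm but also the jump energy $\sum_m\|[v_k]_{m-1}\|^2$ and the endpoint values; I will need the jump control in the last step.

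Then I would split $\bar z-\hat z_k=(\bar z-\pi^r_k\bar z)+\xi_k$ with $\xi_k:=\pi^r_k\bar z-\hat z_k\in X^0_k$ (note $\bar z(t_m)\in H^1_0(\Omega)$, so $\pi^r_k\bar z\in X^0_k$). Using the coercivity and then $B(\xi_k,\bar z-\hat z_k)=0$,
\begin{equation*}
\|\nabla\xi_k\|_I^2\ \le\ B(\xi_k,\xi_k)\ =\ B(\xi_k,\bar z-\hat z_k)-B(\xi_k,\bar z-\pi^r_k\bar z)\ =\ -\,B(\xi_k,\bar z-\pi^r_k\bar z).
\end{equation*}
Expanding $B(\xi_k,\bar z-\pi^r_k\bar z)$ from \eqref{bilinearF} (again the $\partial_t$‑sum vanishes since $\xi_k\in X_k$), and using that $\bar z-\pi^r_k\bar z$ vanishes at the interpolation nodes of \eqref{def_interpolation}, the endpoint and jump contributions either disappear or reduce to terms of the type $([\xi_k]_{m-1},\bar z(t_{m-1})-\bar z(t_m))$; these are bounded by Cauchy--Schwarz against $\sum_m\|[\xi_k]_{m-1}\|^2$ and absorbed into the left side of the coercivity identity (the non‑increasing step‑size condition \eqref{condition2} is available to line up the jump weights if needed). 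After absorption one has $\|\nabla\xi_k\|_I\le C\|\nabla(\bar z-\pi^r_k\bar z)\|_I$, and \eqref{EE11} follows from the triangle inequality $\|\nabla(\bar z-\hat z_k)\|_I\le\|\nabla(\bar z-\pi^r_k\bar z)\|_I+\|\nabla\xi_k\|_I$.

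I expect the only delicate point to be this last step: guaranteeing that \emph{every} non‑conforming contribution in $B(\xi_k,\bar z-\pi^r_k\bar z)$ is either killed by the interpolation — because $\bar z-\pi^r_k\bar z$ vanishes precisely at the time nodes where the DG(0) traces and jumps of the adjoint (backward) scheme live — or is controlled by the jump energy that the coercivity supplies on the left. This is exactly where the matching of the one‑sided, pointwise‑in‑time interpolant with the time direction of \eqref{EE10} is essential; with the mismatched sided interpolant an uncontrolled term of the form $\sum_m\|\nabla(\bar z(t_m)-\bar z(t_{m-1}))\|^2$ would appear. Everything else is routine Cauchy--Schwarz and Young bookkeeping.
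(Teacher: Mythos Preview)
Your overall strategy coincides with the paper's: verify $B(\varphi_k,\bar z-\hat z_k)=0$ on $X^0_k$, use the coercivity $B(\xi_k,\xi_k)\ge\|\nabla\xi_k\|_I^2$ for $\xi_k:=\pi^r_k\bar z-\hat z_k\in X^0_k$, and reduce via orthogonality to estimating $B(\xi_k,\pi^r_k\bar z-\bar z)$. The gap is exactly at the step you flagged as ``delicate''. If you expand $B(\xi_k,\pi^r_k\bar z-\bar z)$ from \eqref{bilinearF} (dropping $\partial_t\xi_k$), the traces that appear are $(\pi^r_k\bar z-\bar z)^+_{m-1}=\bar z(t_m)-\bar z(t_{m-1})$, which do \emph{not} vanish. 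Pairing with $[\xi_k]_{m-1}$ and absorbing the jump energy as you propose leaves the residual $\sum_{m}\|\bar z(t_m)-\bar z(t_{m-1})\|_{L^2(\Omega)}^2$ on the right. This quantity is of order $k\,\|\partial_t\bar z\|_{L^2(I;L^2(\Omega))}^2$, whereas $\|\nabla(\bar z-\pi^r_k\bar z)\|_I^2$ is of order $k^2\|\bar z\|_{H^1(I;H^1(\Omega))}^2$; the residual is therefore \emph{not} dominated by the interpolation error, and the asserted bound $\|\nabla\xi_k\|_I\le C\|\nabla(\bar z-\pi^r_k\bar z)\|_I$ does not follow from your absorption argument.

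The paper does not absorb at all. It records the expansion
\[
B(\xi_k,\pi^r_k\bar z-\bar z)=(\nabla(\pi^r_k\bar z-\bar z),\nabla\xi_k)_I-\sum_{m=1}^M\big((\pi^r_k\bar z-\bar z)^-_m,[\xi_k]_m\big),
\]
and then observes that the \emph{left} limits $(\pi^r_k\bar z-\bar z)^-_m=\bar z(t_m)-\bar z(t_m)=0$ for every $m$, so the whole sum vanishes identically. Hence $B(\xi_k,\xi_k)\le(\nabla(\pi^r_k\bar z-\bar z),\nabla\xi_k)_I$, and a single Cauchy--Schwarz gives $\|\nabla\xi_k\|_I\le\|\nabla(\bar z-\pi^r_k\bar z)\|_I$ with constant one. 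The mechanism is that the backward DG(0) scheme couples to the second argument through its left limits at the nodes, and $\pi^r_k$ is chosen precisely so that $\bar z-\pi^r_k\bar z$ has zero left limits there; this \emph{kills} the trace terms outright rather than merely bounding them, which is what you need for \eqref{EE11}.
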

\begin{proof}
Observing that
\begin{equation}\label{ESS}
\Vert \nabla(\bar{z}-\hat{z}_k)\Vert_{I}\le \Vert \nabla(\bar{z}-\pi^r_k\bar{z})\Vert_{I}+\Vert \nabla(\pi^r_k\bar{z}-\hat{z}_k)\Vert_{I},
\end{equation}
we only need to estimate the second term on the right-hand side of the above inequality.

{Adding the expression \eqref{bilinearF}  and the dual expression \eqref{dbilinearF} of the bilinear form $B$ for $v=w=\pi^r_k\bar{z}-\hat{z}_k\in X_k$, we obtain
\begin{equation}
\begin{split}
 \Vert \nabla(\pi^r_k\bar{z}-\hat{z}_k)\Vert_{I}^2&\le B(\pi^r_k\bar{z}-\hat{z}_k,\pi^r_k\bar{z}-\hat{z}_k)\\
 &=B(\pi^r_k\bar{z}-\hat{z}_k,\pi^r_k\bar{z}-\bar{z})\\
 &=(\nabla(\pi^r_k\bar{z}-\bar{z}),\nabla(\pi^r_k\bar{z}-\hat{z}_k ))_I-\sum_{m=1}^M((\pi^r_k\bar{z}-\bar{z})^-_m,[\pi^r_k\bar{z}-\hat{z}_k]_m)\\
 &\le (\nabla(\pi^r_k\bar{z}-\bar{z}),\nabla(\pi^r_k\bar{z}-\hat{z}_k ))_I\\
 &\le\Vert \nabla(\bar{z}-\pi^r_k\bar{z})\Vert_{I}\Vert \nabla(\hat{z}_k-\pi_k^r\bar{z})\Vert_{I},
\end{split}
\end{equation}
where we have used $\bar{z}(t_m^-)=\pi^r_k\bar{z}|_{I_m}=\bar{z}(t_m)$}. Then, combining the above estimate with \eqref{ESS} gives the desired estimate \eqref{EE11}. This completes the proof.
\end{proof}

\begin{Lemma}\label{Gradient_jump_E}
Assume that $\Omega$ is a bounded smooth domain and the compatibility condition \eqref{compatibility_condition} holds for the adjoint equation \eqref{Conjugate_EQ} with $g:=\bar{y}-y_d$, where $\bar{y}$ is the optimal state and $y_d\in L^2(I;H^1(\Omega))\cap H^{\frac{1}{2}}(I;L^2(\Omega))$. Let $\hat{z}_k\in X_k$ be the finite element approximation of $\bar{z}$ defined by \eqref{EE10}, then $\hat{z}_k\in L^2(I;H^3(\Omega)\cap H_0^1(\Omega))$ and there holds
\begin{equation}\label{jump_adjoint}
\sum_{m=1}^Mk_m^{-1}\left\Vert\nabla[\hat{z}_k]_m\right\Vert^2_{L^2(\Omega)}\le C\Vert\bar{z}\Vert^2_{H^1(I;H^1(\Omega))},
\end{equation}
where $[\hat{z}_k]_M:=-\hat{z}_{k,M-1}=-\hat{z}_{k}|_{I_{M}}$. 
\end{Lemma}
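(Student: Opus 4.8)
The plan is to separate $[\hat z_k]_m$ into an interpolation part and a finite element error part, bound the former directly and the latter by a DG(0) energy argument that mirrors the proof of Lemma~\ref{jumping_estimate} but carries one extra spatial derivative, the price being one extra order of spatial regularity of $\bar z$ which is exactly what the compatibility condition supplies. First I would collect the inputs. By Theorem~\ref{optimall_regularity} applied with $g:=\bar y-y_d$ (the admissible boundary case $s'=\tfrac34$, which is available since $y_d\in L^2(I;H^1(\Omega))\cap H^{\frac12}(I;L^2(\Omega))$) together with \eqref{compatibility_condition}, the adjoint state satisfies $\bar z\in L^2(I;H^3(\Omega)\cap H^1_0(\Omega))\cap H^{\frac32}(I;L^2(\Omega))$ and $\partial_t\bar z\in L^2(I;H^1(\Omega))\cap H^{\frac12}(I;L^2(\Omega))$; differentiating $\bar z|_\Gamma=0$ in time gives $\partial_t\bar z\in L^2(I;H^1_0(\Omega))$, and in particular $\bar z\in H^1(I;H^1(\Omega))$. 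Moreover, rewriting \eqref{EE10} as the backward time-stepping scheme $-k_m\Delta\hat z_{k,m}+\hat z_{k,m}-\hat z_{k,m+1}=\int_{I_m}(\bar y-y_d)$ in $H^{-1}(\Omega)$, with $\hat z_{k,m}\in H^1_0(\Omega)$ and $\hat z_{k,M+1}:=0$, and since $\bar y-y_d\in L^2(I;H^1(\Omega))$ (here $\bar y\in L^2(I;H^1(\Omega))$ again by Theorem~\ref{optimall_regularity}), elliptic regularity on the smooth domain $\Omega$ yields, by downward induction on $m$, that $\hat z_{k,m}\in H^3(\Omega)\cap H^1_0(\Omega)$, i.e. $\hat z_k\in L^2(I;H^3(\Omega)\cap H^1_0(\Omega))$ — which is the first assertion.

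For the jump estimate I would set $w_k:=\hat z_k-\pi^r_k\bar z\in X_k^0$ (with $\pi^r_k$ the pointwise interpolation \eqref{def_interpolation}, so $\pi^r_k\bar z|_{I_m}=\bar z(t_m)$ and $\pi^r_k\bar z|_{I_M}=\bar z(T)=0$) and decompose $[\hat z_k]_m=[w_k]_m+\bigl(\bar z(t_{m+1})-\bar z(t_m)\bigr)$ with the convention $\bar z(t_{M+1}):=0$. The interpolation term is immediate: $\bar z(t_{m+1})-\bar z(t_m)=\int_{I_{m+1}}\partial_t\bar z$, so that $k_m^{-1}\|\nabla(\bar z(t_{m+1})-\bar z(t_m))\|_{L^2(\Omega)}^2\le (k_{m+1}/k_m)\int_{I_{m+1}}\|\nabla\partial_t\bar z\|_{L^2(\Omega)}^2\le\int_{I_{m+1}}\|\nabla\partial_t\bar z\|_{L^2(\Omega)}^2$ by \eqref{condition2}, and summation gives a bound by $\|\partial_t\bar z\|_{L^2(I;H^1(\Omega))}^2\le C\|\bar z\|_{H^1(I;H^1(\Omega))}^2$ (the $m=M$ term vanishes since $\bar z(T)=0$). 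It remains to control $\sum_m k_m^{-1}\|\nabla[w_k]_m\|_{L^2(\Omega)}^2$. Proceeding as in Lemma~\ref{QQ1}, I subtract the dual representation \eqref{dbilinearF} of $B(\varphi_k,\pi^r_k\bar z)$ from \eqref{EE10} and insert the adjoint equation \eqref{Conjugate_EQ} tested over each $I_m$; using $\pi^r_k\bar z|_{I_m}=\bar z(t_m)$ and $\bar z(T)=0$ one arrives at the error identity $B(\varphi_k,w_k)=(\nabla\hat G,\nabla\varphi_k)_I+\sum_{m=1}^{M-1}(\delta^2_m\bar z,\varphi_{k,m})+(\bar z(t_{M-1}),\varphi_{k,M})$ for all $\varphi_k\in X_k^0$, where $\hat G|_{I_m}:=P^m_k\bar z-\bar z(t_m)$ (a first-order quadrature error of $\nabla\bar z$, so $\|\nabla\hat G_m\|_{L^2(\Omega)}^2\le k_m\int_{I_m}\|\nabla\partial_t\bar z\|_{L^2(\Omega)}^2$) and $\delta^2_m\bar z:=\bar z(t_{m-1})-2\bar z(t_m)+\bar z(t_{m+1})=[\pi^r_k\bar z]_m-[\pi^r_k\bar z]_{m-1}\in H^1_0(\Omega)$ is a second time-difference (note $\bar z(t_j)|_\Gamma=0$).

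Next I run the DG(0) energy argument on this identity. Restricting $\varphi_k$ to a single $I_m$ and testing with $\varphi_k|_{I_m}=w_{k,m}-w_{k,m+1}$ (and $\varphi_k|_{I_M}=w_{k,M}$), using Cauchy--Schwarz and Young's inequality with the pairings $(\nabla\hat G_m,\nabla(w_{k,m}-w_{k,m+1}))$ and $(\delta^2_m\bar z,w_{k,m}-w_{k,m+1})\le\|\delta^2_m\bar z\|_{H^{-1}(\Omega)}\|\nabla(w_{k,m}-w_{k,m+1})\|_{L^2(\Omega)}$, then dividing by $k_m^2$ and invoking \eqref{condition2} (so that the gradient contributions telescope: the weight $\tfrac1{2k_m}$ on $\|\nabla w_{k,m+1}\|^2$ is dominated by $\tfrac1{2k_{m+1}}$), summation over $m$ leaves, after discarding the nonnegative boundary contribution $\tfrac1{2k_1}\|\nabla w_{k,1}\|^2$,
$\sum_m k_m^{-1}\|\nabla[w_k]_m\|_{L^2(\Omega)}^2\le C\sum_m k_m^{-1}\|\nabla\hat G_m\|_{L^2(\Omega)}^2+C\sum_m k_m^{-3}\|\delta^2_m\bar z\|_{H^{-1}(\Omega)}^2.$
The first sum is controlled by the quadrature bound above, giving $\le\|\nabla\partial_t\bar z\|_{L^2(I;L^2(\Omega))}^2\le C\|\bar z\|_{H^1(I;H^1(\Omega))}^2$, and the isolated term $(\bar z(t_{M-1}),\varphi_{k,M})=(\bar z(t_{M-1})-\bar z(T),\varphi_{k,M})$ contributes harmlessly $k_M^{-1}\|\nabla(\bar z(t_{M-1})-\bar z(T))\|^2\le\int_{I_M}\|\nabla\partial_t\bar z\|^2$. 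Combining these with the interpolation part and the first assertion would complete the proof.

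The hard part, and the step I expect to be the main obstacle, is the term $\sum_m k_m^{-3}\|\delta^2_m\bar z\|_{H^{-1}(\Omega)}^2$: reconciling the $k_m^{-3}$ weight with the fact that a second time-difference, while genuinely of order $k^2$ on a uniform grid (controlled in the manner of Lemma~\ref{lemma_g} by $\partial_t\bar z\in H^{\frac12}(I;L^2(\Omega))$, equivalently $\bar z\in H^{\frac32}(I;L^2(\Omega))$), only enjoys an $O(k_m-k_{m+1})$ remainder on a general non-increasing grid. The point is that $\delta^2_m\bar z\in H^1_0(\Omega)$, so this remainder must be measured in the $H^{-1}(\Omega)$-norm, where it can be traded for the extra spatial derivative carried by $\partial_t\bar z\in L^2(I;H^1(\Omega))$ — precisely the regularity granted by the compatibility condition — and then absorbed using the quasi-uniformity \eqref{condition1}, the monotonicity \eqref{condition2}, and the already-established estimate $\sum_m k_m^{-1}\|\nabla[\pi^r_k\bar z]_j\|^2\le C\|\bar z\|_{H^1(I;H^1(\Omega))}^2$. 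A careful bookkeeping of these two regularity contributions against the mesh variation is where the technical weight of the lemma lies; everything else is bookkeeping around the identities above.
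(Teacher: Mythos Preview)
Your overall plan---split $[\hat z_k]_m$ into an interpolation jump and an error jump, bound the first directly via $\partial_t\bar z\in L^2(I;H^1(\Omega))$, and run an energy argument for the second---is exactly the paper's strategy, and your regularity assertions (including $\hat z_k\in L^2(I;H^3\cap H^1_0)$ by backward elliptic bootstrapping) are correct. The decisive difference is your choice of interpolant: you take $\pi_k^r$, while the paper takes $\pi_k^l$. For the \emph{backward} time-stepping scheme \eqref{semi_discrete_adjoint_EQ}, subtracting the continuous relation \eqref{weak_form_adjoint} with $\pi_k^l$ produces the clean identity
\[
-\big([\pi_k^l\bar z-\hat z_k]_m,\varphi_k\big)+\big(\nabla(\pi_k^l\bar z-\hat z_k),\nabla\varphi_k\big)_{I_m}=\big(\nabla(\pi_k^l\bar z-\bar z),\nabla\varphi_k\big)_{I_m},
\]
whose right-hand side is a \emph{first-order} interpolation residual. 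Testing with $\varphi_k=[\pi_k^l\bar z-\hat z_k]_m$, telescoping, and invoking Lemma~\ref{QQ1} gives $\sum_m k_m\|\nabla[\pi_k^l\bar z-\hat z_k]_m\|^2\le Ck^2\|\bar z\|_{H^1(I;H^1)}^2$; the quasi-uniformity \eqref{condition1} then converts the $k_m$ weight to $k_m^{-1}$ at the cost of $k^{-2}$, and the triangle inequality against your interpolation estimate finishes the proof. No second differences appear.

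With $\pi_k^r$, by contrast, your error identity necessarily carries the second-difference $\delta_m^2\bar z$, and this is a genuine gap, not bookkeeping. After your division by $k_m^2$ and the $H^{-1}/H^1_0$ pairing you are left with $\sum_m k_m^{-3}\|\delta_m^2\bar z\|_{H^{-1}}^2$. Even on a \emph{uniform} grid this equals $k^{-1}\sum_m\|P_k^{m+1}\partial_t\bar z-P_k^m\partial_t\bar z\|_{H^{-1}}^2$, and the best available bound on the sum is $C\|\partial_t\bar z\|_{H^{1/2}(I;H^{-1})}^2$ (the analogue of \eqref{pro_jum_est}); the prefactor $k^{-1}$ then blows up. To kill that factor you would need $\partial_t\bar z\in H^1(I;H^{-1})$, equivalently $\partial_{tt}\bar z\in L^2(I;H^{-1})$, but $\partial_{tt}\bar z=-\Delta\partial_t\bar z-\partial_t(\bar y-y_d)$ and neither $\bar y$ nor $y_d$ is assumed to lie in $H^1(I;L^2)$. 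Measuring in $H^{-1}(\Omega)$ cannot help: the obstruction is purely temporal. The situation is worse on non-uniform grids, where $\delta_m^2\bar z$ picks up an additional remainder $(k_{m+1}-k_m)P_k^m\partial_t\bar z$ that is only $O(k_m)$ under \eqref{condition1}--\eqref{condition2}, so even the weaker $k_m^{-2}$-weighted $L^2$ bound fails. The fix is simply to switch to $\pi_k^l$; then the second-difference never arises and the argument closes exactly as in the paper.
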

 \begin{proof}
For each fixed $m=1,\cdots,M$, we denote by $\hat{z}_{k,m-1}:=\hat{z}_k|_{I_m}$ and rewrite the equation \eqref{EE10} as the following time-steeping scheme: For any $\varphi_k\in P_0(I_m,H^1_0(\Omega))$ there holds
   	\begin{equation}\label{semi_discrete_adjoint_EQ}
     		\begin{cases}
     			-\left(\frac{\hat{z}_{k,m}-\hat{z}_{k,m-1}}{k_m},\varphi_k\right)_{I_m}+\left(\nabla \hat{z}_{k,m-1},\nabla\varphi_k\right)_{I_m}=(P_k(\bar{y}-y_d),\varphi_k)_{I_m},\\
                \hat{z}_{k,M}=0.
     		\end{cases}
     	\end{equation}
Since $\frac{\hat{z}_{k,m}}{k_m}+P_k(\bar{y}-y_d)\in H^1(\Omega)$, there holds $\hat{z}_{k,m-1}\in H^3(\Omega)\cap H_0^1(\Omega)$ by the regularity result of elliptic equations (cf. \cite[Theorem 1.8, Chapter I]{V. Girault}).

On the other hand, the adjoint equation \eqref{Conjugate_EQ} satisfies the following weak form on $I_m$:
    	\begin{equation}\label{weak_form_adjoint}
     		\begin{cases}
     			-\left(\frac{[\pi_k^l \bar{z}]_m }{k_m},\varphi_k\right)_{I_m}+\left(\nabla\bar{z},\nabla\varphi_k\right)_{I_m}=(P_k(\bar{y}-y_d),\varphi_k)_{I_m}\quad\forall\varphi_k\in P_0(I_m,H^1_0(\Omega)),\\
                \bar{z}(t_M)=0,
     		\end{cases}
     	\end{equation}
 where $[\pi_k^l \bar{z}]_m=\bar{z}(t_m)-\bar{z}(t_{m-1})$. Subtracting \eqref{weak_form_adjoint} from \eqref{semi_discrete_adjoint_EQ}, we obtain
 \begin{equation}
 -\left(\frac{[\pi_k^l\bar{z}-\hat{z}_k]_m}{k_m},\varphi_k\right)_{I_m}+\left(\nabla(\bar{z}-\hat{z}_{k,m-1}),\nabla\varphi_k\right)_{I_m}=0\quad\forall\varphi_k\in P_0(I_m,H^1_0(\Omega)),\nonumber
 \end{equation}
 i.e.,
 \begin{equation}\nonumber
 \begin{split}
 -\left(\frac{[\pi_k^l\bar{z}-\hat{z}_k]_m}{k_m},\varphi_k\right)_{I_m}+\left(\nabla(\pi_k^l\bar{z}-\hat{z}_{k,m-1}),\nabla\varphi_k\right)_{I_m}=(\nabla(\pi_k^l\bar{z}-\bar{z}),\nabla\varphi_k)_{I_m}.
  \end{split}
 \end{equation}
Taking $\varphi_k=[\pi_k^l\bar{z}-\hat{z}_k]_m$ in the above equality, we obtain
 \begin{equation}\label{identity_1}
 \begin{split}
(\nabla(\pi_k^l\bar{z}-\hat{z}_{k,m-1}),\nabla[\pi_k^l\bar{z}-\hat{z}_k]_m)_{I_m}-\frac{\left\Vert[\pi_k^l\bar{z}-\hat{z}_k]_m\right\Vert^2_{I_m}}{k_m}=(\nabla(\pi_k^l\bar{z}-\bar{z}),\nabla[\pi_k^l\bar{z}-\hat{z}_k]_m)_{I_m}.
  \end{split}
 \end{equation}
 Note that
 \begin{align*}
 &\left(\nabla(\pi_k^l\bar{z}-\hat{z}_{k,m-1}),\nabla[\pi_k^l\bar{z}-\hat{z}_k]_m\right)_{I_m}\\
 =&-\frac{1}{2}\Vert\nabla[\pi_k^l\bar{z}-\hat{z}_k]_m\Vert^2_{I_m}+\frac{1}{2}\Big(\Vert\nabla(\pi_k^l\bar{z}-\hat{z}_k)^+_m\Vert^2_{I_m}-\Vert\nabla(\pi_k^l\bar{z}-\hat{z}_k)^-_{m}\Vert^2_{I_m}\Big),\nonumber
 \end{align*}
 where $(\pi_k^l\bar{z}-\hat{z}_k)^+_M:=0$. Therefore, the equality \eqref{identity_1} is equivalent to
 \begin{equation}
 \begin{split}
 &-\frac{\Vert[\pi_k^l\bar{z}-\hat{z}_k]_m\Vert^2_{I_m}}{k_m}-\frac{1}{2}\left(\Vert\nabla[\pi_k^l\bar{z}-\hat{z}_k]_m\Vert^2_{I_m}-\Vert\nabla(\pi_k^l\bar{z}-\hat{z}_k)^+_m\Vert^2_{I_m}+\Vert\nabla(\pi_k^l\bar{z}-\hat{z}_k)^-_{m}\Vert^2_{I_m}\right)\\\nonumber
 &=(\nabla(\pi_k^l\bar{z}-\bar{z}),\nabla[\pi_k^l\bar{z}-\hat{z}_k]_m)_{I_m}.
  \end{split}
 \end{equation}
 Thus, we obtain the following estimate:
 \begin{equation}
 \begin{split}
 &\frac{\left\Vert[\pi_k^l\bar{z}-\hat{z}_k]_m\right\Vert^2_{I_m}}{k_m}+\frac{1}{2}\Vert\nabla[\pi_k^l\bar{z}-\hat{z}_k]_m\Vert^2_{I_m}-\Vert\nabla((\pi_k^l\bar{z})(t_m)-\hat{z}_{k,m})\Vert^2_{I_m}\\
 &+\Vert\nabla((\pi_k^l\bar{z})(t_{m-1})-\hat{z}_{k,m-1})\Vert^2_{I_m}
 \le C\Vert\nabla(\pi_k^l\bar{z}-\bar{z})\Vert^2_{I_m}.
  \end{split}
 \end{equation}

 Using the condition \eqref{condition1} we are led to
 \begin{equation}\nonumber
 \begin{split}
 \frac{\left\Vert[\pi_k^l\bar{z}\!-\!\hat{z}_k]_m\right\Vert^2_{I_m}}{k_m}\!+\!\frac{1}{2}\left\Vert\nabla[\pi_k^l\bar{z}-\hat{z}_k]_m\right\Vert^2_{I_m}\!
 \le&C\left\Vert\nabla(\pi_k^l\bar{z}-\bar{z})\right\Vert^2_{I_m}+\Vert\nabla((\pi_k^l\bar{z})(t_m)-\hat{z}_{k,m})\Vert^2_{I_m}\\
 \le& C\Vert\nabla(\pi_k^l\bar{z}-\bar{z})\Vert^2_{I_m}+\frac{k_m}{k_{m+1}}\Vert\nabla(\pi_k^l\bar{z}-\hat{z}_{k})\Vert^2_{I_{m+1}}\\
 \le& C(\Vert\nabla(\pi_k^l\bar{z}-\bar{z})\Vert^2_{I_m}\!+\!\Vert\nabla(\pi_k^l\bar{z}-\bar{z})\Vert^2_{I_{m+1}}+\Vert\nabla(\bar{z}-\hat{z}_{k})\Vert^2_{I_{m+1}}).
  \end{split}
 \end{equation}
Summing up  for $m=1,2,\cdots,M$ on both sides of the above estimate, we obtain
  \begin{equation}\label{Estimte1}
 \begin{split}
 \sum_{m=1}^M\left\Vert[\pi_k^l\bar{z}-\hat{z}_k]_m\right\Vert^2_{L^2(\Omega)}+k_m\Vert\nabla[\pi_k^l\bar{z}-\hat{z}_k]_m\Vert^2_{L^2(\Omega)} &\le C(\Vert\nabla(\pi_k^l\bar{z}-\bar{z})\Vert^2_{I}+\Vert\nabla(\bar{z}-\hat{z}_{k})\Vert^2_{I})\\
&\le Ck^2 \Vert\bar{z}\Vert^2_{H^1(I;H^1(\Omega))},
  \end{split}
 \end{equation}
 where Lemma \ref{QQ1} has been used. Besides, the term $\sum_{m=1}^M\Vert\nabla[\pi_k^l\bar{z}]_m\Vert^2_{L^2(\Omega)}$ can be estimated as
   \begin{equation}\label{Estimte2}
 \begin{split}
 \sum_{m=1}^Mk_m^{-1}\Vert\nabla[\pi_k^l\bar{z}]_m\Vert^2_{L^2(\Omega)}&=\sum_{m=1}^Mk_m^{-1}\Vert\nabla\bar{z}(t_m)-\nabla\bar{z}(t_{m-1})\Vert^2_{L^2(\Omega)}\\
 &=\sum_{m=1}^Mk_m^{-1}\Big\Vert\nabla\int^{t_m}_{t_{m-1}}\partial_t \bar{z}dt\Big\Vert^2_{L^2(\Omega)}\\
 &\le \sum_{m=1}^M\int^{t_m}_{t_{m-1}}\Vert\nabla\partial_t \bar{z}\Vert^2_{L^2(\Omega)}dt\\
 &\le C\left\Vert\bar{z}\right\Vert_{H^1(I;H^1(\Omega))}.
  \end{split}
 \end{equation}
Finally,  using \eqref{Estimte1}, \eqref{Estimte2} and \eqref{condition1} we get the desired result   \eqref{jump_adjoint}.
\end{proof}

\begin{Lemma}\label{Eer_S}
Let $\Omega$ be a bounded smooth domain and $\hat{z}_k\in X_k$ be the solution of \eqref{EE10}. For any  $y_d\in L^2(I;H^{2s-\frac{1}{2}}(\Omega))\cap H^{s-\frac{1}{4}}(I;L^2(\Omega))$ ($s\in[\frac{1}{4},\frac{3}{4}]$), if the optimal state $\bar{y}\in L^2(I;H^{2s-\frac{1}{2}}(\Omega))\cap H^{s-\frac{1}{4}}(I;L^2(\Omega))$, the optimal adjoint state $\bar{z}\in L^2(I;H^{2s+\frac{3}{2}}(\Omega)\cap H^1_0(\Omega))\cap H^{s+\frac{3}{4}}(I;L^2(\Omega))$ and $\partial_t\bar{z}\in L^2(I;H^{2(s-\frac{1}{4})}(\Omega))\cap H^{s-\frac{1}{4}}(I;L^2(\Omega))$, then we have
\begin{equation}\label{ST_adjoint_state_estimate}
\begin{split}
&\Vert\bar{z}-\hat{z}_k\Vert_{L^2(I;H^{2(s-\frac{1}{4})}(\Omega))}\le Ck\left(\Vert \bar{y}-y_d\Vert_{L^2(I;H^{2s-\frac{1}{2}}(\Omega))}+\Vert \bar{y}-y_d\Vert_{H^{s-\frac{1}{4}}(I;L^2(\Omega))}\right),\\
&\Vert \hat{z}_k\Vert_{L^2(I;H^{2s+\frac{3}{2}}(\Omega))}\le C\left(\Vert \bar{y}-y_d\Vert_{L^2(I;H^{2s-\frac{1}{2}}(\Omega))}+\Vert \bar{y}-y_d\Vert_{H^{s-\frac{1}{4}}(I;L^2(\Omega))}\right).
\end{split}
\end{equation}
\end{Lemma}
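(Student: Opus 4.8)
The plan is to establish both estimates first at the two endpoints $s=\tfrac14$ and $s=\tfrac34$ and then recover the whole range $s\in[\tfrac14,\tfrac34]$ by interpolating in the spatial Sobolev scale. Since $2(s-\tfrac14)$ and $2s+\tfrac32$ are affine in $s$, at $s=\tfrac14$ the target norms are $\Vert\cdot\Vert_{L^2(I;L^2(\Omega))}$ and $\Vert\cdot\Vert_{L^2(I;H^2(\Omega))}$, while at $s=\tfrac34$ they are $\Vert\cdot\Vert_{L^2(I;H^1(\Omega))}$ and $\Vert\cdot\Vert_{L^2(I;H^3(\Omega))}$; these four endpoint statements are exactly what the interpolation argument needs.

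At $s=\tfrac14$ the first estimate is the classical $L^2(I;L^2(\Omega))$ error bound for the DG(0) discretization of the backward heat equation, $\Vert\bar z-\hat z_k\Vert_I\le Ck\Vert\partial_t\bar z\Vert_I$ (obtainable by a duality argument in time as in the proof of Theorem~\ref{error_semi_d}, or from \cite[Theorem~5.1]{MeidnerVexler}); combined with Lemma~\ref{back_regularity} with $r=0$, which gives $\bar z\in H^1(I;L^2(\Omega))$ and $\Vert\bar z\Vert_{H^1(I;L^2(\Omega))}\le C\Vert\bar y-y_d\Vert_{L^2(I;L^2(\Omega))}$, this yields the claim. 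For the stability estimate, rewrite \eqref{EE10} on each $I_m$ as the time-stepping scheme \eqref{semi_discrete_adjoint_EQ}, i.e. $-\Delta\hat z_{k,m-1}=P^m_k(\bar y-y_d)+d_t\hat z_k|_{I_m}$ with $d_t\hat z_k|_{I_m}:=(\hat z_{k,m}-\hat z_{k,m-1})/k_m$; applying $H^2$-regularity of the Dirichlet Laplacian on $\Omega$ pointwise in time, multiplying by $k_m$, summing over $m$, and using $\Vert P_k(\bar y-y_d)\Vert_I\le\Vert\bar y-y_d\Vert_I$ together with the DG(0) stability bound $\sum_{m}k_m^{-1}\Vert[\hat z_k]_m\Vert^2_{L^2(\Omega)}\le C\Vert\bar y-y_d\Vert^2_I$ (\cite[Corollary~4.2]{MeidnerVexler}) gives $\Vert\hat z_k\Vert_{L^2(I;H^2(\Omega))}\le C\Vert\bar y-y_d\Vert_{L^2(I;L^2(\Omega))}$.

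At $s=\tfrac34$, Lemma~\ref{QQ1} reduces the $L^2(I;H^1(\Omega))$-error to the interpolation error $\Vert\nabla(\bar z-\pi^r_k\bar z)\Vert_I$; applying \eqref{interpolation_error_esti} componentwise to $\nabla\bar z$ with exponent $1$, then the improved-regularity clause of Lemma~\ref{back_regularity} (with $g=\bar y-y_d$) to bound $\Vert\partial_t\bar z\Vert_{L^2(I;H^1(\Omega))}$ by $C(\Vert\bar y-y_d\Vert_{L^2(I;H^1(\Omega))}+\Vert\bar y-y_d\Vert_{H^{1/2}(I;L^2(\Omega))})$, and finally Poincar\'e's inequality (since $\bar z-\hat z_k\in L^2(I;H^1_0(\Omega))$), yields the first estimate. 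For the stability, Lemma~\ref{Gradient_jump_E} already provides $\hat z_k\in L^2(I;H^3(\Omega)\cap H^1_0(\Omega))$ together with $\sum_{m}k_m^{-1}\Vert\nabla[\hat z_k]_m\Vert^2_{L^2(\Omega)}\le C\Vert\bar z\Vert^2_{H^1(I;H^1(\Omega))}$; applying $H^3$-regularity of $-\Delta$ to \eqref{semi_discrete_adjoint_EQ} pointwise in time (so that $\Vert\hat z_{k,m-1}\Vert_{H^3(\Omega)}\le C(\Vert P^m_k(\bar y-y_d)\Vert_{H^1(\Omega)}+\Vert d_t\hat z_k|_{I_m}\Vert_{H^1(\Omega)})$), multiplying by $k_m$, summing, using the jump bound for the gradient part of $d_t\hat z_k$ and the plain DG(0) stability for its $L^2$ part, and controlling $\Vert\bar z\Vert_{H^1(I;H^1(\Omega))}$ once more by Lemma~\ref{back_regularity}, gives $\Vert\hat z_k\Vert_{L^2(I;H^3(\Omega))}\le C(\Vert\bar y-y_d\Vert_{L^2(I;H^1(\Omega))}+\Vert\bar y-y_d\Vert_{H^{1/2}(I;L^2(\Omega))})$.

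Finally, fix $k$ and regard $g=\bar y-y_d\mapsto\bar z(g)-\hat z_k(g)$ and $g\mapsto\hat z_k(g)$ as linear maps. The endpoint results say the first is bounded $L^2(I;L^2(\Omega))\to L^2(I;L^2(\Omega))$ and $L^2(I;H^1(\Omega))\cap H^{1/2}(I;L^2(\Omega))\to L^2(I;H^1(\Omega))$ with norm $\le Ck$, and the second is bounded $L^2(I;L^2(\Omega))\to L^2(I;H^2(\Omega))$ and $L^2(I;H^1(\Omega))\cap H^{1/2}(I;L^2(\Omega))\to L^2(I;H^3(\Omega))$ with norm $\le C$. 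Using $[L^2(I;L^2(\Omega)),\,L^2(I;H^1(\Omega))\cap H^{1/2}(I;L^2(\Omega))]_\theta=L^2(I;H^\theta(\Omega))\cap H^{\theta/2}(I;L^2(\Omega))$ and $[L^2(I;H^a(\Omega)),L^2(I;H^{a+1}(\Omega))]_\theta=L^2(I;H^{a+\theta}(\Omega))$ with $\theta=2s-\tfrac12$, the interpolation theorem yields the two asserted estimates for every $s\in[\tfrac14,\tfrac34]$. I expect the main obstacle to be the stability bounds at the endpoints, specifically the $k$-uniform control of $d_t\hat z_k$ and $\nabla d_t\hat z_k$ in $L^2(I;L^2(\Omega))$ that drives the elliptic-regularity step: this is exactly where Lemma~\ref{Gradient_jump_E} and the discrete stability results of \cite{MeidnerVexler} are indispensable, and one must also check that the compatibility condition behind Lemma~\ref{Gradient_jump_E} (and behind the improved clause of Lemma~\ref{back_regularity}) is furnished by the assumed regularity $\partial_t\bar z\in L^2(I;H^1(\Omega))$ at $s=\tfrac34$, as in Theorem~\ref{optimall_regularity}. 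Verifying the interpolation-space identifications, with the fractional-in-time and intersection norms, is routine but should be stated with care.
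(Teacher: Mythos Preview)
Your proposal is correct and follows essentially the same route as the paper: establish both estimates at the endpoints $s=\tfrac14$ (via \cite[Theorem~5.1 and Corollary~4.2]{MeidnerVexler}) and $s=\tfrac34$ (via Lemma~\ref{QQ1} for the error and Lemma~\ref{Gradient_jump_E} combined with elliptic $H^3$-regularity for the stability), then interpolate. The only cosmetic difference is that the paper interpolates the map $\bar z\mapsto \bar z-\hat z_k$ on the scale $H^1(I;H^\sigma(\Omega))\to L^2(I;H^\sigma(\Omega))$, whereas you interpolate the map $g\mapsto \bar z(g)-\hat z_k(g)$ directly on the data spaces; both yield the same conclusion.
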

\begin{proof}
First, we prove the finite element error between $\bar{z}$ and $\hat{z}_k$. The proof is divided into three steps.

{\it Step 1}. We consider the case $s=\frac{1}{4}$, i.e.,
\begin{align}
\Vert\bar{z}-\hat{z}_k\Vert_{L^2(I;L^2(\Omega))}\le Ck \Vert\bar{z}\Vert_{H^1(I;L^2(\Omega))},
\end{align}
which is a direct consequence of \cite[Theorem 5.1]{MeidnerVexler}.

{\it Step 2}. We consider the case $s=\frac{3}{4}$. By Poincar\'e's inequality and Lemma \ref{QQ1}, we obtain
\begin{equation}
\Vert\bar{z}-\hat{z}_k\Vert_{L^2(I;H^1(\Omega))}
\le C\left\Vert \nabla(\bar{z}-\pi^r_k\bar{z})\right\Vert_{I}
\le Ck\Vert\bar{z}\Vert_{H^1(I;H^1(\Omega))}.
\end{equation}

{\it Step 3}. The case $\frac{1}{4}<s<\frac{3}{4}$ follows from interpolations. To do this, denote by $\mathcal{G}$  a linear operator from $\bar{z}$ to $\bar{z}-\hat{z}_k$. The above two estimates imply that $\Vert \mathcal{G}\Vert_{H^1(I;L^2(\Omega))\to L^2(I;L^2(\Omega))}\le Ck$ and $\Vert \mathcal{G}\Vert_{H^1(I;H^1(\Omega))\to L^2(I;H^1(\Omega))}\le Ck$, so that the desired error estimate follows from Proposition 14.1.5 and Theorem 14.2.3 in \cite{BrennerScott2008}.

Next, we prove the stability estimate in \eqref{ST_adjoint_state_estimate}. Similar to the above analysis, the estimate is proved by interpolation estimates and the main procedure includes three steps. Setting $\hat{z}_{k,m-1}:=\hat{z}_k|_{I_m}\in H^2(\Omega)\cap H_0^1(\Omega)$, the equation \eqref{EE10} can be rewritten as the time-stepping scheme:
   	\begin{equation}
     		\begin{cases}
     			-\frac{\hat{z}_{k,m}-\hat{z}_{k,m-1}}{k_m}-\Delta \hat{z}_{k,m-1}=P_k(\bar{y}-y_d)\qquad &\mbox{in}\quad\Omega,\,\, m=1,\cdots,M,\\ \nonumber
     			\hat{z}_{k,m}=0\quad&\mbox{on}\ \ \,\Gamma,\,\,m=1,\cdots,M,\\
                \hat{z}_{k,M}=0&\mbox{in}\quad \,\Omega,\\
     		\end{cases}
     	\end{equation}
 which is a family of elliptic equations on the interval $I_m$.

 We first consider the case $s =\frac{3}{4}$. By the regularity of elliptic equations, there holds $\hat{z}_{k,m-1}\in H^3(\Omega)\cap H^1_0(\Omega)$. Moreover, we can obtain that 
\begin{align*}
 \Vert\hat{z}_{k,m-1}\Vert^2_{H^3(\Omega)}&\le C \Big(\Vert P_k(\bar{y}-y_d)\Vert^2_{H^1(\Omega)}+k_m^{-2}\Vert[\hat{z}_k]_m\Vert^2_{H^1(\Omega)}\Big)\\
 &\le C \Big(\Vert P_k(\bar{y}-y_d)\Vert^2_{H^1(\Omega)}+k_m^{-2}\Vert\nabla[\hat{z}_k]_m\Vert^2_{L^2(\Omega)}\Big),
 \end{align*}
 where $[\hat{z}_k]_M:=-\hat{z}_{k,M-1}$. Integration in $I_m$ on both sides of the above inequality and summing up for $m$, we can obtain
 \begin{align*}
 \Vert\hat{z}_{k}\Vert^2_{L^2(I;H^3(\Omega))}&\le C\Big(\Vert \bar{y}-y_d\Vert^2_{L^2(I;H^1(\Omega))}+\sum^M_{m=1}k^{-1}_m\Vert\nabla[\hat{z}_{k}]_m\Vert^2_{L^2(\Omega)}\Big)\\
 &\le C\left(\Vert\bar{y}-y_d\Vert^2_{L^2(I;H^1(\Omega))}+\Vert\bar{z}\Vert^2_{H^1(I;H^1(\Omega))}\right)\\
 &\le C\Big(\Vert\bar{y}-y_d\Vert^2_{L^2(I;H^1(\Omega))}+\Vert\bar{y}-y_d\Vert^2_{H^{\frac{1}{2}}(I;L^2(\Omega))}\Big),
 \end{align*}
where we have used Lemma \ref{Gradient_jump_E} and Lemma \ref{back_regularity}.

The cases $s=\frac{1}{4}$ can be shown by a similar argument. In fact, we can derive the following stability estimate (cf. \cite[Corollary 4.2]{MeidnerVexler}):
\begin{equation}
\Vert\hat{z}_{k}\Vert^2_{L^2(I;H^2(\Omega))}\le C\Vert\bar{y}-y_d\Vert^2_{L^2(I;L^2(\Omega))}.
\end{equation}

The case
$\frac{1}{4}<s<\frac{3}{4}$ now follows from interpolations, where the following interpolation spaces will be used in this process:
\begin{align*}
&[L^2(I;H^3(\Omega)),L^2(I;H^2(\Omega))]_{\frac{3}{2}-2s}=L^2(I;H^{2s+\frac{3}{2}}(\Omega)),\\
&[L^2(I;H^1(\Omega))\cap H^{\frac{1}{2}}(I;L^2(\Omega)),L^2(I;L^2(\Omega))]_{\frac{3}{2}-2s}=L^2(I;H^{2s-\frac{1}{2}}(\Omega))\cap H^{s-\frac{1}{4}}(I;L^2(\Omega)),
\end{align*}
 which are classical and can be found in many literatures, e.g., \cite{J.L1}. This completes the proof.
\end{proof}
\begin{Remark}
Using the regularity results in Theorem \ref{optimall_regularity} and Theorem \ref{regularity_cont_cnvex}, we observe that Lemma \ref{Eer_S} holds for $s\in [\frac{1}{4},\frac{3}{4}]$ when $\Omega$ is a smooth domain, and for $s=\frac{1}{4}$ when $\Omega$ is a polytope. Based on this lemma, we are able to derive optimal orders of convergence for the solution of temporal semi-discrete control problems in next subsection.
\end{Remark}
		
	\subsection{Error estimates for the temporal semi-discretization}
     In order to estimate the error for the temporal semi-discrete solution of \eqref{time_semi_control}, we establish the stability of the semi-discrete solution of the parabolic inhomogeneous boundary value problem with respect to the Dirichlet data in the following lemma.
     \begin{Lemma}\label{semi_stable_estimate}
     	For any given $u_k\in X_k(\Gamma)$, let $y_k\in X_k$ be the solution of the following equation:
     	\begin{equation}\label{EQF_1}
     			B(y_k,\varphi_k)=0\qquad\forall\varphi_k\in X_k^0,\quad
     			y_k|_{I\times\Gamma}=u_k.
     	\end{equation}
 Then there holds
     	\begin{equation}
     		\Vert y_k\Vert_I\le C\Vert u_k\Vert_{ L^2(I;L^2(\Gamma))}.\nonumber
     	\end{equation}
     \end{Lemma}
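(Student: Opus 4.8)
The plan is to run a duality (transposition) argument, the discrete analogue of the one behind Lemma~\ref{T_Solution_E}, but against a \emph{discrete} backward problem so that the consistency term disappears exactly. Since $y_k\in X_k$ but in general $y_k\notin X_k^0$, the primal equation cannot be tested directly, so I would pass through a discrete boundary flux. First, let $z_k\in X_k^0$ solve the temporally semi-discrete backward problem
\begin{equation}
B(\varphi_k,z_k)=(y_k,\varphi_k)_I\qquad\forall\,\varphi_k\in X_k^0,\nonumber
\end{equation}
which is well posed by the standard theory for the DG(0) scheme, and define $\partial_n z_k\in X_k(\Gamma)$ by $\int_{\Sigma_T}\partial_n z_k\,\Phi_k\,dsdt:=B(\Phi_k,z_k)-(y_k,\Phi_k)_I$ for all $\Phi_k\in X_k$; this is legitimate because the right-hand side vanishes on $X_k^0$ and hence depends only on $\Phi_k|_{I\times\Gamma}$ (compare Definition~\ref{defn_Norm} and the Remark following Theorem~\ref{time_discrete_NC}). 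Taking $\Phi_k=y_k$, using $B(y_k,z_k)=0$ from \eqref{EQF_1} (valid since $z_k\in X_k^0$), and $y_k|_{I\times\Gamma}=u_k$, we obtain the representation
\begin{equation}
\|y_k\|_I^2=(y_k,y_k)_I=-\int_{\Sigma_T}\partial_n z_k\,u_k\,dsdt\le\|\partial_n z_k\|_{L^2(I;L^2(\Gamma))}\,\|u_k\|_{L^2(I;L^2(\Gamma))}.\nonumber
\end{equation}

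It then remains to prove $\|\partial_n z_k\|_{L^2(I;L^2(\Gamma))}\le C\|y_k\|_I$. Testing the equation for $z_k$ with functions supported on a single subinterval shows that $z_{k,m}:=z_k|_{I_m}\in H^1_0(\Omega)$ solves the elliptic problem $-\Delta z_{k,m}+k_m^{-1}(z_{k,m}-z_{k,m+1})=y_k|_{I_m}$ in $\Omega$ (with the convention $z_{k,M+1}:=0$); its right-hand side lies in $L^2(\Omega)$, so, $\Omega$ being convex polytopal or smooth, elliptic $H^2$-regularity gives $z_{k,m}\in H^2(\Omega)\cap H^1_0(\Omega)$ with $\|z_{k,m}\|_{H^2(\Omega)}\le C\|\Delta z_{k,m}\|_{L^2(\Omega)}$. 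Inserting this elliptic identity into Green's formula in space and telescoping the resulting temporal terms identifies $\partial_n z_k$ with the usual normal derivative, $\partial_n z_k|_{I_m}=\partial_n z_{k,m}$. Then the trace estimate $\|\partial_n v\|_{L^2(\Gamma)}\le C\|v\|_{H^2(\Omega)}$ yields
\begin{equation}
\|\partial_n z_k\|_{L^2(I;L^2(\Gamma))}^2=\sum_{m=1}^{M}k_m\|\partial_n z_{k,m}\|_{L^2(\Gamma)}^2\le C\sum_{m=1}^{M}k_m\|\Delta z_{k,m}\|_{L^2(\Omega)}^2=C\|\Delta z_k\|_I^2,\nonumber
\end{equation}
and the mesh-independent stability estimate for the DG(0) discretization of the backward heat equation (cf.~\cite[Corollary~4.2]{MeidnerVexler}) gives $\|\Delta z_k\|_I\le C\|y_k\|_I$. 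Combining the last two displays gives $\|y_k\|_I^2\le C\|y_k\|_I\|u_k\|_{L^2(I;L^2(\Gamma))}$, and dividing by $\|y_k\|_I$ finishes the proof.

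I expect the bound $\|\partial_n z_k\|_{L^2(I;L^2(\Gamma))}\le C\|y_k\|_I$ to be the delicate point: one has to convert the merely $H^1_0(\Omega)$ discrete backward state $z_{k,m}$ into a genuine $L^2(\Gamma)$ boundary flux, and it is precisely here that the $H^2$-regularity of $\Omega$ and the mesh-independence of the DG(0) stability constant are essential — without either, the constant would degenerate as $k\to0$. The apparently simpler alternative of testing against the \emph{continuous} backward solution $z$ of \eqref{T_back_eq} with $g:=y_k$ is actually harder in this setting, because then $B(y_k,z)\ne0$ and one is left with temporal jump contributions $\sum_{m}([y_k]_{m-1},z(t_{m-1})-(P_kz)_{m-1}^{+})$ that cannot be absorbed by the regularity of a smooth continuous state (there is none available) and whose jumps do not vanish on $\Gamma$; working with the discrete backward solution $z_k$ removes this difficulty by making the consistency term vanish identically.
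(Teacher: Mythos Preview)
Your proposal is correct and follows essentially the same route as the paper. Both arguments introduce the same semi-discrete backward problem $z_k\in X_k^0$ with right-hand side $y_k$, obtain the identity $\|y_k\|_I^2=-(\partial_n z_k,u_k)_{L^2(I;L^2(\Gamma))}$ from $B(y_k,z_k)=0$, and close with the trace bound $\|\partial_n z_k\|_{L^2(I;L^2(\Gamma))}\le C\|z_k\|_{L^2(I;H^2(\Omega))}$ combined with the DG(0) stability $\|z_k\|_I+\|\Delta z_k\|_I\le C\|y_k\|_I$ from \cite[Corollary~4.2]{MeidnerVexler}; the only cosmetic difference is that you first define $\partial_n z_k$ abstractly via the residual functional and then identify it with the classical normal derivative, whereas the paper writes out the elliptic time-stepping scheme and applies Green's formula directly.
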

     \begin{proof}
     	Let $z_k\in X_k^0$ satisfy the equation
     	\begin{equation}
     		B(\varphi_k,z_k)=(y_k,\varphi_k)_I\qquad\forall\varphi_k\in  X_k^0.\nonumber
     	\end{equation}
     	For each fixed $m=1,\cdots,M$, setting $z_{k,m-1}:=z_k|_{I_m}\in H^2(\Omega)\cap H_0^1(\Omega)$, then the above equation can be rewritten as the following time-stepping scheme:
     	\begin{equation}
     		\begin{cases}
     			-\frac{z_{k,m}-z_{k,m-1}}{k_m}-\Delta z_{k,m-1}=y_{k,m}\qquad &\mbox{in}\quad\Omega,\\ \nonumber
     			z_{k,M}=0&\mbox{in}\quad \Omega,\\
     			z_{k,m}=0\quad&\mbox{on}\quad \Gamma.
     		\end{cases}
     	\end{equation}
     	Multiplying $y_{k,m}$ on both sides of the above equation,  integration by parts in  $I_m\times\Omega$ yields
     	\begin{equation}
     		-\left([z_k]_m,y_{k,m}\right)+(\nabla z_k,\nabla y_k)_{L^2(I_m;L^2(\Omega))}-(\partial_n z_k,u_k)_{L^2(I_m;L^2(\Gamma))}=\Vert y_k\Vert^2_{L^2(I_m;L^2(\Omega))}.\nonumber
     	\end{equation}
     Summing up the above identity for $m$ and observing $z_{k,M}=z(T)=0$, we have
     	\begin{equation}
     		-\!\!\sum\limits_{m=1}^{M-1}([z_k]_m,y_{k,m})+(z^-_{k,M},y_{k,M})+(\nabla z_k,\nabla y_k)_{L^2(I;L^2(\Omega))}-(\partial_n z_k,u_k)_{L^2(I;L^2(\Gamma))}=\Vert y_k\Vert^2_I.\nonumber
     	\end{equation}
     	That is,
     	\begin{equation}
     		B(y_k,z_k)-(\partial_n z_k,u_k)_{L^2(I;L^2(\Gamma))}=\Vert y_k\Vert^2_I.\nonumber
     	\end{equation}

     	By using equation (\ref{EQF_1}), we derive
     	\begin{align*}
     		\Vert y_k\Vert^2_I&=-(\partial_n z_k,u_k)_{L^2(I;L^2(\Gamma))}\\
     		&\le \left\Vert \partial_n z_k\right\Vert_{L^2(I;L^2(\Gamma))}\left\Vert u_k\right\Vert_{L^2(I;L^2(\Gamma))}\\
     		&\le C\Vert z_k\Vert_{L^2(I;H^2(\Omega))}\Vert u_k\Vert_{L^2(I;L^2(\Gamma))}\\
     		&\le C(\Vert z_k\Vert_I+\Vert\Delta z_k\Vert_I)\Vert u_k\Vert_{L^2(I;L^2(\Gamma))}\\
     		&\le C\Vert y_k\Vert_I\Vert u_k\Vert_{L^2(I;L^2(\Gamma))},
     	\end{align*}
     where we have used the stability of the temporal semi-discrete solution of parabolic equations (cf. \cite[Corollary 4.2]{MeidnerVexler}). This completes the proof.
     \end{proof}
		
	Now we are ready to state the result on the error estimation for the solution of the temporal semi-discrete optimal control problem \eqref{time_semi_control}.
 \begin{Proposition}\label{Error_Tem_Opt_pair}
Assume that $\Omega$ is a  bounded, smooth domain or convex polytope. Let $(\bar{u},\bar{y})$ and  $(\bar{u}_k,\bar{y}_k)\in U_{ad}\times X_k$ be the optimal solution of the control problem \eqref{BC_Functioal} and the semi-discrete problem \eqref{time_semi_control}, respectively. Assume that $\bar{z}\in L^2(I;H^2(\Omega)\cap H^1_0(\Omega))$ is the adjoint state, $y_k(\bar{u})$ is the solution of \eqref{time_semi_state_e} with $u$ replaced by $\bar{u}$, and $\hat{z}_k\in X^0_k$ satisfies \eqref{EE10}. Then 
		\begin{equation}\label{EE00}
			\left\Vert \bar{u}_k-\bar{u}\right\Vert_{L^2(I;L^2(\Gamma))}+\left\Vert \bar{y}_k-\bar{y}\right\Vert_{L^2(I;L^2(\Omega))}\le C\left(\Vert \partial_n(\bar{z}-\hat{z}_k)\Vert_{L^2(I;L^2(\Gamma))}+\Vert \bar{y}-y_k(\bar{u})\Vert_{L^2(I;L^2(\Omega))}\right).
		\end{equation}
\end{Proposition}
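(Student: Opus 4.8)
The strategy is the standard variational-inequality argument adapted to the present setting, where the optimal control is recovered via the projection formula \eqref{optimal_control}. First I would test the continuous optimality condition \eqref{V_Inequation2} with $v=\bar u_k\in U_{ad}$ and the semi-discrete optimality condition \eqref{semi_discrete_VI_1} with $v=\bar u\in U_{ad}$, then add the two inequalities. After rearranging, this produces a coercive term $\alpha\|\bar u-\bar u_k\|_{L^2(I;L^2(\Gamma))}^2$ on one side, balanced by cross terms involving the normal derivatives $\partial_n\bar z$ and $\partial_n\bar z_k$ together with the state-difference terms coming from $\hat J'$ and $\hat J_k'$. Concretely one obtains
\begin{equation}\nonumber
\alpha\|\bar u-\bar u_k\|_{L^2(I;L^2(\Gamma))}^2\le \int_{\Sigma_T}(\partial_n\bar z-\partial_n\bar z_k)(\bar u_k-\bar u)\,dsdt+\big(\text{terms from }(\bar y-y_d,\cdot)\big).
\end{equation}

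Next I would split $\partial_n\bar z-\partial_n\bar z_k$ as $(\partial_n\bar z-\partial_n\hat z_k)+(\partial_n\hat z_k-\partial_n\bar z_k)$, where $\hat z_k$ is the semi-discrete adjoint solving \eqref{EE10} with the \emph{continuous} optimal state $\bar y$ as data, and $\bar z_k$ solves \eqref{time_discrete_adjoint} with the \emph{discrete} optimal state $\bar y_k$. The first piece $\partial_n(\bar z-\hat z_k)$ is exactly the quantity appearing on the right-hand side of \eqref{EE00}, so it can be kept as is. The second piece $\partial_n(\hat z_k-\bar z_k)$ is governed by $\bar y-\bar y_k$ through the defining equations of the discrete normal derivative \eqref{semi_discrete_norm_deriv}; using the definition with test function $\phi_k=\tilde P_k(\bar u_k-\bar u)$ and the stability Lemma \ref{semi_stable_estimate} (or directly Cauchy–Schwarz plus the $L^2$-boundedness of $\tilde P_k$), this contributes a term bounded by $C\|\bar y-\bar y_k\|_{L^2(I;L^2(\Omega))}\|\bar u-\bar u_k\|_{L^2(I;L^2(\Gamma))}$. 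Similarly the remaining $(\bar y-y_d,\cdot)$ terms reduce, after introducing $y_k(\bar u)$ as an intermediate state and using the linearity of the state-to-solution maps, to contributions controlled by $\|\bar y-y_k(\bar u)\|_{L^2(I;L^2(\Omega))}$ and by $\|\bar y-\bar y_k\|_{L^2(I;L^2(\Omega))}$.

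The key remaining obstacle is then to absorb the $\|\bar y-\bar y_k\|_{L^2(I;L^2(\Omega))}$ terms that appear on the right-hand side. For this I would write $\bar y-\bar y_k=(\bar y-y_k(\bar u))+(y_k(\bar u)-y_k(\bar u_k))$; the first summand is the data term already present in \eqref{EE00}, while the second is $S_k(\bar u-\bar u_k)$, whose $L^2(I;L^2(\Omega))$-norm is bounded by $C\|\bar u-\bar u_k\|_{L^2(I;L^2(\Gamma))}$ by the stability of the semi-discrete solution operator (Lemma \ref{semi_stable_estimate}). Substituting this back, all occurrences of $\|\bar y-\bar y_k\|$ either become the admissible data term or combine into $\|\bar u-\bar u_k\|^2$; choosing the regularization-dependent constant appropriately and applying Young's inequality lets me absorb the quadratic $\|\bar u-\bar u_k\|^2$ into the left-hand side (here the coercivity constant $\alpha>0$ is essential and the implicit constant $C$ in the final estimate will depend on $\alpha$). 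This yields the bound on $\|\bar u-\bar u_k\|_{L^2(I;L^2(\Gamma))}$; the bound on $\|\bar y-\bar y_k\|_{L^2(I;L^2(\Omega))}$ follows immediately from the decomposition $\bar y-\bar y_k=(\bar y-y_k(\bar u))+S_k(\bar u-\bar u_k)$ and the stability estimate once more. The main care needed is bookkeeping of the various adjoint states ($\bar z$, $\hat z_k$, $\bar z_k$) and intermediate states ($y_k(\bar u)$, $\bar y_k$), and making sure the projection $\tilde P_k$ commuting through the normal-derivative definitions does not generate uncontrolled terms — which is guaranteed by the $L^2$-stability of $\tilde P_k$ and the identity $p_k(\tilde P_k(v-\bar u_k))=y_k(v)-\bar y_k$ already exploited in the proof of Theorem \ref{time_discrete_NC}.
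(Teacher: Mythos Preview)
Your overall strategy---test the two variational inequalities and subtract---matches the paper's, but there is a genuine gap in the absorption step. After splitting $\partial_n\bar z-\partial_n\bar z_k=(\partial_n\bar z-\partial_n\hat z_k)+(\partial_n\hat z_k-\partial_n\bar z_k)$ and bounding the second piece by $C\|\bar y-\bar y_k\|_I\,\|\bar u-\bar u_k\|_{L^2(\Sigma_T)}$ via Cauchy--Schwarz, substituting $\|\bar y-\bar y_k\|_I\le\|\bar y-y_k(\bar u)\|_I+C'\|\bar u-\bar u_k\|_{L^2(\Sigma_T)}$ produces a term $CC'\|\bar u-\bar u_k\|^2_{L^2(\Sigma_T)}$ on the right. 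Young's inequality cannot shrink this: the constant $CC'$ comes from the stability of the semi-discrete solution and adjoint maps and is in no way dominated by $\alpha$. So the claimed absorption fails as written.

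The paper sidesteps this by introducing a \emph{different} intermediate adjoint $\tilde z_k\in X_k^0$ solving $B(\varphi_k,\tilde z_k)=(y_k(\bar u)-y_d,\varphi_k)_I$, i.e.\ the semi-discrete adjoint driven by the auxiliary state $y_k(\bar u)$ rather than by $\bar y_k$. Concretely, the paper replaces the discrete optimality term by the continuous one via $-\hat J_k'(\bar u_k)(\bar u-\bar u_k)\le 0\le -\hat J'(\bar u)(\bar u-\bar u_k)$, obtaining $\alpha\|\bar u-\bar u_k\|_{L^2(\Sigma_T)}\le\|\partial_n\bar z-\partial_n\tilde z_k\|_{L^2(\Sigma_T)}$, and then splits $\partial_n\bar z-\partial_n\tilde z_k=(\partial_n\bar z-\partial_n\hat z_k)+(\partial_n\hat z_k-\partial_n\tilde z_k)$. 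The second piece is now governed by $\bar y-y_k(\bar u)$ directly (stability of the semi-discrete adjoint, cf.\ \cite[Corollary~4.2]{MeidnerVexler}), so no circular term ever appears. Your route can also be rescued, but only if you refrain from applying Cauchy--Schwarz too early: using \eqref{semi_discrete_norm_deriv} with $\phi_k=\tilde P_k(\bar u-\bar u_k)$ gives
\[
\int_{\Sigma_T}(\partial_n\hat z_k-\partial_n\bar z_k)(\bar u-\bar u_k)\,dsdt=-\int_{\Omega_T}(\bar y-\bar y_k)(y_k(\bar u)-\bar y_k)\,dxdt,
\]
and expanding $\bar y-\bar y_k=(\bar y-y_k(\bar u))+(y_k(\bar u)-\bar y_k)$ \emph{inside the integral} yields the good-sign term $-\|y_k(\bar u)-\bar y_k\|_I^2$, which is precisely what makes absorption possible.
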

\begin{proof}
Since $\hat{J}_k$ is a quadratic functional, $\hat{J}^{''}_k(u)$ is independent of $u$ for all $u\in U_{ad}$. Note that  $\hat{J}^{''}_k(u)(v,v)\ge\alpha\Vert v\Vert^2_{L^2(I;L^2(\Gamma))}$, $\forall v\in L^2(I;L^2(\Gamma))$. Define the auxiliary variable $\tilde{z}_k\in X^0_k $ such that
		\begin{equation}
			B(\varphi_k,\tilde{z}_k)=(y_k(\bar{u})-y_d,\varphi_k)_I\qquad\forall\varphi_k\in X^0_k.\nonumber
		\end{equation}
 Then
		\begin{align}\label{EE0}\notag
			\alpha\Vert \bar{u}-\bar{u}_k\Vert^2_{L^2(I;L^2(\Gamma))}&\le \hat{J}^{''}_k(\bar{u}_k)(\bar{u}-\bar{u}_k,\bar{u}-\bar{u}_k)\\\notag
			&= \hat{J}^{'}_k(\bar{u})(\bar{u}-\bar{u}_k)-\hat{J}^{'}_k(\bar{u}_k)(\bar{u}-\bar{u}_k)\\\notag
			&\le \hat{J}^{'}_k(\bar{u})(\bar{u}-\bar{u}_k)-\hat{J}^{'}(\bar{u})(\bar{u}-\bar{u}_k)\\\notag
			&\le \int_{\Sigma_T}(\partial_n\bar{z}-\partial_n\tilde{z}_k)\big(\bar{u}-\bar{u}_k\big)\\
			&\le \left\Vert \partial_n\bar{z}-\partial_n\tilde{z}_k\right\Vert_{L^2(I;L^2(\Gamma))}\Vert \bar{u}-\bar{u}_k\Vert_{L^2(I;L^2(\Gamma))},
		\end{align}
		where we have used  $-\hat{J}^{'}_k(\bar{u}_k)(\bar{u}-\bar{u}_k)\le0\le-\hat{J}^{'}(\bar{u})(\bar{u}-\bar{u}_k)$ which can be easily checked by taking $v=\bar{u}$ in (\ref{semi_discrete_VI_1}) and $v=\bar{u}_k$ in (\ref{V_Inequation2}). From the inequality (\ref{EE0}), we can obtain
		\begin{equation}\label{EE1}
			\begin{split}
				\alpha\left\Vert \bar{u}-\bar{u}_k\right\Vert_{L^2(I;L^2(\Gamma))}&\le\Vert \partial_n\bar{z}-\partial_n\tilde{z}_k\Vert_{L^2(I;L^2(\Gamma))}\\
				&\le \Vert \partial_n\bar{z}-\partial_n\hat{z}_k\Vert_{L^2(I;L^2(\Gamma))}+\Vert\partial_n\hat{z}_k-\partial_n\tilde{z}_k\Vert_{L^2(I;L^2(\Gamma))},
			\end{split}
		\end{equation}
		where $\hat{z}_k\in X^0_k$ satisfies \eqref{EE10}.
		It follows from the stability estimate of the temporal semi-discrete solution of adjoint equations (cf. \cite[Corollary 4.2]{MeidnerVexler}) that
		\begin{equation}\label{EE2}
			\begin{aligned}
				\Vert\partial_n\hat{z}_k-\partial_n\tilde{z}_k\Vert_{L^2(I;L^2(\Gamma))}&\le C\Vert\hat{z}_k -\tilde{z}_k\Vert_{L^2(I;H^2(\Omega))}\\
				&\le C\big(\Vert\hat{z}_k -\tilde{z}_k\Vert_{L^2(I;L^2(\Omega))}+\Vert\Delta(\hat{z}_k -\tilde{z}_k)\Vert_{L^2(I;L^2(\Omega))}\big)\\
				&\le C\Vert\bar{y}-y_k(\bar{u})\Vert_I.
			\end{aligned}
		\end{equation}


Now, we consider the error estimate for the semi-discrete optimal state:
		\begin{equation}\label{EE3}
			\begin{split}
				\Vert \bar{y}-\bar{y}_k\Vert_I&\le \Vert \bar{y}-y_k(\bar{u})\Vert_I+\Vert y_k(\bar{u})-\bar{y}_k\Vert_I\\
				&\le \Vert \bar{y}-y_k(\bar{u})\Vert_I+C\Vert\tilde{P}_k\bar{u}-\tilde{P}_k\bar{u}_k\Vert_{L^2(I;L^2(\Gamma))}\\
				&\le \Vert \bar{y}-y_k(\bar{u})\Vert_I+C\Vert\bar{u}-\bar{u}_k\Vert_{L^2(I;L^2(\Gamma))},\\
			\end{split}
		\end{equation}
		where we have used Lemma \ref{semi_stable_estimate} and the stability of $L^2$-projection $\tilde{P}_k$. Combining the estimates \eqref{EE1}-\eqref{EE3}, we finally obtain the estimate \eqref{EE00}. This completes the proof.
\end{proof}

From Proposition \ref{Error_Tem_Opt_pair}, it remains to estimate the right-hand terms of \eqref{EE00}, which  depends heavily on the smoothness of $\Omega$.
	\begin{Theorem}\label{time_semi_control_error}
Assume that  $f\in L^2(I;L^2(\Omega)),\ y_0\in H^{2s-1}(\Omega)$ and $y_d\in L^2(I;H^{2s^\prime-\frac{1}{2}}(\Omega))\cap H^{s^\prime-\frac{1}{4}}(I;L^2(\Omega))$, where $s\in[\frac{1}{2}, \frac{3}{4})$, $s^\prime\in [s-\frac{1}{4}, s]$ in case that $\Omega$ is smooth, and $s=\frac{1}{2},\ s^\prime=\frac{1}{4}$ in case that $\Omega$ is a polytope. Let $(\bar{u},\bar{y})$ and $(\bar{u}_k,\bar{y}_k)\in U_{ad}\times X_k$ be the optimal solution of the optimal control problem \eqref{BC_Functioal} and the semi-discrete optimal control problem \eqref{time_semi_control}, respectively. Then 
		\begin{equation}
			\Vert \bar{u}_k-\bar{u}\Vert_{L^2(I;L^2(\Gamma))}+\Vert \bar{y}_k-\bar{y}\Vert_{L^2(I;L^2(\Omega))}\le C(k^s+k^{s^\prime}).
		\end{equation}
	\end{Theorem}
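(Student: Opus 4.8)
The plan is to invoke Proposition~\ref{Error_Tem_Opt_pair}, which reduces the whole estimate to bounding the two quantities on the right-hand side of \eqref{EE00}, namely $\Vert\partial_n(\bar z-\hat z_k)\Vert_{L^2(I;L^2(\Gamma))}$ and $\Vert\bar y-y_k(\bar u)\Vert_{L^2(I;L^2(\Omega))}$. I treat the smooth and polytopal cases separately, since the available regularity differs. In the polytopal case, Theorem~\ref{regularity_cont_cnvex} gives $\bar y\in L^2(I;H^1(\Omega))\cap H^{1/2}(I;L^2(\Omega))$, $\bar u\in L^2(I;H^{1/2}(\Gamma))\cap H^{1/4}(I;L^2(\Gamma))$, and $\bar z\in L^2(I;H^2(\Omega)\cap H^1_0(\Omega))\cap H^1(I;L^2(\Omega))$; this is exactly the setting $s=s'=\tfrac14$, and I expect the target rate $k^{1/4}$.

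For the state error term I would apply Theorem~\ref{error_semi_d} with $u$ replaced by $\bar u$: this directly yields
\[
\Vert\bar y-y_k(\bar u)\Vert_{L^2(I;L^2(\Omega))}\le C\bigl(k^{s}\Vert\bar u\Vert_{H^{s-1/4}(I;L^2(\Gamma))}+k\Vert f\Vert_{L^2(I;L^2(\Omega))}\bigr)+Ck^{s}\bigl(\Vert\bar y\Vert_{H^{s}(I;L^2(\Omega))}+\text{(the }s=\tfrac12\text{ correction term)}\bigr),
\]
and by Theorems~\ref{optimall_regularity} and~\ref{regularity_cont_cnvex} all the norms on the right are finite and controlled by the data, so this contributes $O(k^{s})$ (with $s=\tfrac12$ in the polytopal case). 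For the adjoint normal-derivative term I would use the trace theorem to pass from $\Vert\partial_n(\bar z-\hat z_k)\Vert_{L^2(I;L^2(\Gamma))}$ to an interior Sobolev norm of $\bar z-\hat z_k$: since $\partial_n:\,H^{2(s'-1/4)+1/2+\varepsilon}(\Omega)\cap\{\text{elliptic}\}\to L^2(\Gamma)$ is bounded for the relevant range, it suffices to control $\Vert\bar z-\hat z_k\Vert_{L^2(I;H^{2(s'-1/4)}(\Omega))}$ together with a stability bound on $\Vert\hat z_k\Vert_{L^2(I;H^{2s'+3/2}(\Omega))}$, and both are provided by Lemma~\ref{Eer_S} (valid for $s'\in[\tfrac14,\tfrac34]$ when $\Omega$ is smooth and for $s'=\tfrac14$ when $\Omega$ is a polytope, by the Remark following that lemma). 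This gives $\Vert\partial_n(\bar z-\hat z_k)\Vert_{L^2(I;L^2(\Gamma))}\le Ck^{s'}$.

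Combining these two bounds through \eqref{EE00} gives $\Vert\bar u_k-\bar u\Vert_{L^2(I;L^2(\Gamma))}+\Vert\bar y_k-\bar y\Vert_{L^2(I;L^2(\Omega))}\le C(k^{s}+k^{s'})$, as claimed. The main technical point is the clean passage from the boundary normal-derivative of the adjoint error to an interior fractional Sobolev norm: one must check that the exponent $2(s'-\tfrac14)$ delivered by Lemma~\ref{Eer_S} is exactly large enough for the trace of the normal derivative to make sense, and that no half-power is lost in this step; the interpolation identities at the end of Lemma~\ref{Eer_S}'s proof are tailored precisely for this. A secondary bookkeeping issue is confirming that in the polytopal case the regularity in Theorem~\ref{regularity_cont_cnvex} matches $s=s'=\tfrac14$ in Lemma~\ref{Eer_S} and $s=\tfrac12$ in Theorem~\ref{error_semi_d} consistently, so that both contributions are genuinely $O(k^{1/4})$; everything else is a direct substitution of the cited estimates.
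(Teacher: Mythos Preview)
Your overall strategy is exactly the paper's: invoke Proposition~\ref{Error_Tem_Opt_pair}, bound $\Vert\bar y-y_k(\bar u)\Vert_I$ by Theorem~\ref{error_semi_d} (giving $O(k^s)$), and bound $\Vert\partial_n(\bar z-\hat z_k)\Vert_{L^2(I;L^2(\Gamma))}$ via Lemma~\ref{Eer_S} (giving $O(k^{s'})$). However, your description of the normal-derivative step contains a genuine error and a minor slip.

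The slip: in the polytopal case the theorem has $s=\tfrac12$ and $s'=\tfrac14$, not $s=s'=\tfrac14$. You correct yourself later, but keep the indices straight.

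The error: your claim that $\partial_n:H^{2(s'-\frac14)+\frac12+\varepsilon}(\Omega)\to L^2(\Gamma)$ is bounded is false for the relevant range. At $s'=\tfrac14$ this would mean $\partial_n:H^{1/2+\varepsilon}(\Omega)\to L^2(\Gamma)$, which is far below the $H^{3/2}$ threshold for the normal trace. Controlling only $\Vert\bar z-\hat z_k\Vert_{L^2(I;H^{2(s'-1/4)}(\Omega))}$ is therefore not sufficient. The paper instead uses the multiplicative trace inequality
\[
\Vert\partial_n(\bar z-\hat z_k)\Vert_{L^2(I;L^2(\Gamma))}\le C\,\Vert\bar z-\hat z_k\Vert_{L^2(I;H^2(\Omega))}^{1/2}\,\Vert\bar z-\hat z_k\Vert_{L^2(I;H^1(\Omega))}^{1/2},
\]
and then interpolates each factor between $L^2(I;H^{2s'+3/2}(\Omega))$ and $L^2(I;H^{2(s'-1/4)}(\Omega))$, obtaining
\[
\Vert\partial_n(\bar z-\hat z_k)\Vert_{L^2(I;L^2(\Gamma))}\le C\,\Vert\bar z-\hat z_k\Vert_{L^2(I;H^{2s'+3/2}(\Omega))}^{\,1-s'}\,\Vert\bar z-\hat z_k\Vert_{L^2(I;H^{2(s'-1/4)}(\Omega))}^{\,s'}.
\]
Now the \emph{stability} estimate in Lemma~\ref{Eer_S} bounds the high-norm factor uniformly, and the \emph{error} estimate in Lemma~\ref{Eer_S} bounds the low-norm factor by $Ck$; the product is $O(k^{s'})$. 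You invoke both halves of Lemma~\ref{Eer_S} and mention interpolation, so the ingredients are right, but the mechanism you describe for combining them would not deliver the estimate.
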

	\begin{proof}
By Proposition \ref{Error_Tem_Opt_pair}, we only need to consider the two terms on the right-hand side of \eqref{EE00}. In viewing of the regularity result of the optimal pair $(\bar{u},\bar{y})$ in Theorems \ref{optimall_regularity} and \ref{regularity_cont_cnvex}, we first use Theorem \ref{error_semi_d} to deduce the following estimate:
\begin{equation}\label{Estimate_SE}
			\Vert \bar{y}-{y}_k(\bar{u})\Vert_{L^2(I;L^2(\Omega))}\le Ck^s,
		\end{equation}
where $s=\frac{1}{2}$ for a polytopal domain $\Omega$ and $\frac{1}{2}\le s<\frac{3}{4}$ for a smooth domain.


Based on the regularity of the adjoint state $\bar{z}$ in Theorems \ref{optimall_regularity} and \ref{regularity_cont_cnvex}, we estimate the first term on the right-hand side of \eqref{EE00} by taking $s^\prime=\frac{1}{4}$ for a polytopal $\Omega$, and $s-\frac{1}{4}\le s^\prime\le s$ for a smooth domain. By the trace theorem, we have
	\begin{equation}\label{WE1}
\begin{split}
			\Vert\partial_n\bar{z}-\partial_n\hat{z}_k\Vert_{L^2(I;L^2(\Gamma))}&\le C\Vert\bar{z}-\hat{z}_k\Vert^{\frac{1}{2}}_{L^2(I;H^2(\Omega))}\Vert\bar{z}-\hat{z}_k\Vert^{\frac{1}{2}}_{L^2(I;H^1(\Omega))}\\
			&\le C\Vert\bar{z}-\hat{z}_k\Vert^{1-s^\prime}_{L^2(I;H^{2s^\prime+\frac{3}{2}}(\Omega))}\Vert\bar{z}-\hat{z}_k\Vert^{s^\prime}_{L^2(I;H^{2(s^\prime-\frac{1}{4})}(\Omega))}\\
		&\le
Ck^{s^\prime},
\end{split}
\end{equation}
where we have used Lemma \ref{Eer_S} and the following interpolation result (see, e.g., \cite{J.L1}):
\begin{align*}
&\left[L^2(I;H^{2s^\prime+\frac{3}{2}}(\Omega)), L^2(I;H^{2(s^\prime-\frac{1}{4})}(\Omega))\right]_{s^\prime-\frac{1}{4}}=L^2(I;H^2(\Omega)),\\
&\left[L^2(I;H^{2s^\prime+\frac{3}{2}}(\Omega)), L^2(I;H^{2(s^\prime-\frac{1}{4})}(\Omega))\right]_{s^\prime+\frac{1}{4}}=L^2(I;H^1(\Omega)).
\end{align*}
Combining the estimates \eqref{Estimate_SE} and \eqref{WE1}, we finish the proof.	
\end{proof}

\subsection{Error estimates for the spatial discretization }
	In this subsection, we estimate the error between the fully discrete solution of optimal control problem \eqref{space_semi_control} and the temporal semi-discrete solution of optimal control problem \eqref{time_semi_control}. To this end, we first establish the stability of the fully discrete solution to the parabolic  equation with respect to inhomogeneous Dirichlet data.
	
	 \begin{Proposition}\label{full_stable}
	 	For any given $g\in L^2(I;L^2(\Omega))$, let $z\in L^2(I;H^2(\Omega)\cap H_0^1(\Omega))\cap H^1(I;L^2(\Omega))$ be the solution of equation \eqref{T_back_eq} and $z_k$ be the corresponding temporal semi-discretization. By the identity \eqref{semi_discrete_norm_deriv},   the normal derivative $\partial_nz_k$ satisfies
	 	\begin{eqnarray}
	 		\int_{\Sigma_T}\partial_nz_k\phi_kdsdt=-\int_{\Omega_T}gp_k(\phi_k)dxdt\qquad \forall \phi_k\in X_k(\Gamma),\label{norm_dev_time_semi}
	 	\end{eqnarray}
	 	where $p_k(\phi_k)\in X_k$ is the solution of \eqref{semi_discrete_VI_2}. Denote  by $z_{kh}$ the fully discrete solution of \eqref{T_back_eq} and by $\partial^h_nz_{kh}$  the fully discrete normal derivative of $z_{kh}$. Let $y_{kh}\in X_{kh}$ be the solution of
	 	\begin{equation}\nonumber
	 			B(y_{kh},\varphi_{kh})=0\qquad\forall\varphi_{kh}\in X^0_{kh},\quad
	 			y_{kh}|_{I\times\Gamma}=u_{kh}\in X_{kh}(\Gamma).
	 	\end{equation}
Then there holds
	 	\begin{equation}\label{result_1}
	 		\begin{split}
	 			\left\Vert \partial^h_nz_{kh}\right\Vert_{L^2(I;L^2(\Gamma))}&\le C\Vert g\Vert_I,\\
	 			\Vert y_{kh}\Vert_I&\le C\Vert u_{kh}\Vert_{L^2(I;L^2(\Gamma))},\\
	 			\Vert \partial_nz_k-\partial^h_nz_{kh}\Vert_{L^2(I;L^2(\Gamma))}&\le Ch^{\frac{1}{2}}\Vert g\Vert_I.
	 		\end{split}
	 	\end{equation}
	 \end{Proposition}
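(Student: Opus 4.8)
The plan is to establish the three estimates essentially by duality, mirroring the structure of the proofs of Lemma \ref{semi_stable_estimate} and Theorem \ref{error_space_esti}, and using the fully discrete stability results from \cite{MeidnerVexler} together with the projection estimates already set up (Lemma \ref{projection_operator}, Proposition \ref{approximation_prop}). First I would prove the bound on $\Vert\partial_n^h z_{kh}\Vert_{L^2(I;L^2(\Gamma))}$. Given $\phi_{kh}\in X_{kh}(\Gamma)$, the definition \eqref{full_discrete_norm_div} gives $\int_{\Sigma_T}\partial_n^h z_{kh}\phi_{kh}\,dsdt=-\int_{\Omega_T}g\,p_{kh}(\phi_{kh})\,dxdt$, so it suffices to bound $\Vert p_{kh}(\phi_{kh})\Vert_I$ by $C\Vert\phi_{kh}\Vert_{L^2(I;L^2(\Gamma))}$; taking the supremum over $\phi_{kh}$ then yields the claim. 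This is precisely the second estimate in \eqref{result_1} (with $u_{kh}=\phi_{kh}$), so the first and second estimates reduce to a single fully discrete stability bound. To prove $\Vert y_{kh}\Vert_I\le C\Vert u_{kh}\Vert_{L^2(I;L^2(\Gamma))}$ I would repeat the duality argument of Lemma \ref{semi_stable_estimate} at the fully discrete level: introduce $z_{kh}\in X_{kh}^0$ solving $B(\varphi_{kh},z_{kh})=(y_{kh},\varphi_{kh})_I$, test the DG(0)-CG(1) scheme, integrate by parts, and arrive at $\Vert y_{kh}\Vert_I^2=-(\partial_n^h z_{kh},u_{kh})_{L^2(I;L^2(\Gamma))}$ or a discrete analogue. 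The needed ingredient is a fully discrete bound $\Vert\partial_n^h z_{kh}\Vert_{L^2(I;L^2(\Gamma))}\le C\Vert z_{kh}\Vert_{L^2(I;H^1(\Omega))}$ (a discrete trace/inverse estimate) combined with $\Vert\nabla z_{kh}\Vert_I\le C\Vert y_{kh}\Vert_I$, which is the DG(0)-CG(1) stability estimate from \cite[Corollary 4.2 and similar]{MeidnerVexler}; closing the loop gives $\Vert y_{kh}\Vert_I\le C\Vert u_{kh}\Vert_{L^2(I;L^2(\Gamma))}$, and hence the first bound.

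For the third estimate, the error $\partial_n z_k-\partial_n^h z_{kh}$ on the boundary, I would compare the two normal derivatives through their defining identities \eqref{norm_dev_time_semi} and \eqref{full_discrete_norm_div}. For $\phi_{kh}\in X_{kh}(\Gamma)$, subtracting gives $\int_{\Sigma_T}(\partial_n z_k-\partial_n^h z_{kh})\phi_{kh}\,dsdt=-\int_{\Omega_T}g\,(p_k(\phi_{kh})-p_{kh}(\phi_{kh}))\,dxdt$, so the quantity $\Vert p_k(\phi_{kh})-p_{kh}(\phi_{kh})\Vert_I$ controls the boundary error against test functions in $X_{kh}(\Gamma)$. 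By Theorem \ref{error_space_esti} applied to the homogeneous-source problem with Dirichlet data $\phi_{kh}$, one has $\Vert p_k(\phi_{kh})-p_{kh}(\phi_{kh})\Vert_I\le Ch(\Vert p_k(\phi_{kh})\Vert_{L^2(I;H^1(\Omega))}+\Vert\phi_{kh}\Vert_{L^2(I;H^{1/2}(\Gamma))})$. The factor $h^{1/2}$ rather than $h$ in the target appears because one must absorb a discrete inverse estimate $\Vert\phi_{kh}\Vert_{L^2(I;H^{1/2}(\Gamma))}\le Ch^{-1/2}\Vert\phi_{kh}\Vert_{L^2(I;L^2(\Gamma))}$ (for piecewise linears on the boundary mesh) and a corresponding bound $\Vert p_k(\phi_{kh})\Vert_{L^2(I;H^1(\Omega))}\le Ch^{-1/2}\Vert\phi_{kh}\Vert_{L^2(I;L^2(\Gamma))}$, which follows from Proposition \ref{semi_regularity} together with the same trace/inverse estimate. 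Dividing by $\Vert\phi_{kh}\Vert_{L^2(I;L^2(\Gamma))}$ and taking the supremum over $X_{kh}(\Gamma)$ then produces $\Vert\partial_n z_k-\partial_n^h z_{kh}\Vert_{L^2(I;L^2(\Gamma))}\le Ch^{1/2}\Vert g\Vert_I$, where the final $\Vert g\Vert_I$ comes from the stability estimate for $z_k$ (Lemma \ref{back_regularity} with $r=0$ plus \cite[Corollary 4.2]{MeidnerVexler}).

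The main obstacle I expect is the sharp tracking of the $h$-powers in the third estimate: one needs to be careful that the loss of $h^{1/2}$ is exactly compensated, i.e., that the inverse estimates on the boundary finite element space are applied to discrete functions (legitimate) and not to the continuous data, and that Theorem \ref{error_space_esti} and Proposition \ref{semi_regularity} are invoked with the correct regularity of $p_k(\phi_{kh})$. A secondary technical point is the discrete trace inequality $\Vert\partial_n^h z_{kh}\Vert_{L^2(I;L^2(\Gamma))}\le C\Vert z_{kh}\Vert_{L^2(I;H^1(\Omega))}$ for the DG(0)-CG(1) space, which should be derived from the variational definition \eqref{full_discrete_norm_div} by testing with a discrete extension of a boundary function and using standard inverse estimates on $V_h$; one must check this gives an $h$-independent constant. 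Once these discrete trace and inverse estimates are in place, the three bounds follow by the duality arguments sketched above without further difficulty.
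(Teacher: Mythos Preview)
Your plan for the first two estimates is reasonable in spirit, though the paper actually proceeds in the opposite order: it proves the third estimate first, deduces the first by triangle inequality against the known bound $\Vert\partial_n z_k\Vert_{L^2(I;L^2(\Gamma))}\le C\Vert g\Vert_I$, and then obtains the second from the first by the duality you describe. This ordering spares you the separate discrete trace inequality $\Vert\partial_n^h z_{kh}\Vert\le C\Vert z_{kh}\Vert_{L^2(I;H^1)}$, which is not as immediate in the DG(0)-CG(1) setting as you suggest.

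The real gap is in your treatment of the third estimate. After subtracting the two normal-derivative identities you correctly arrive at $\int_{\Sigma_T}(\partial_n z_k-\partial_n^h z_{kh})\phi_{kh}=-(g,p_k(\phi_{kh})-p_{kh}(\phi_{kh}))_I$, but your plan to bound $\Vert p_k(\phi_{kh})-p_{kh}(\phi_{kh})\Vert_I$ via Theorem~\ref{error_space_esti} and then control $\Vert p_k(\phi_{kh})\Vert_{L^2(I;H^1(\Omega))}$ through Proposition~\ref{semi_regularity} does not close. Proposition~\ref{semi_regularity} requires the boundary data in $H^{1/4}(I;L^2(\Gamma))$, and for a piecewise-constant-in-time function $\phi_{kh}\in X_{kh}(\Gamma)$ the norm $\Vert\phi_{kh}\Vert_{H^{1/4}(I;L^2(\Gamma))}$ is not controlled by $\Vert\phi_{kh}\Vert_{L^2(I;L^2(\Gamma))}$ uniformly in $k$. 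You would end up with a bound of the form $C(h^{1/2}+h\,k^{-\beta})\Vert g\Vert_I$, i.e.\ precisely the $k$--$h$ coupling the paper is designed to eliminate.

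The paper avoids bounding $\Vert p_k-p_{kh}\Vert_I$ altogether. It rewrites $(g,p_k-p_{kh})_I=B(p_{kh}-p_k,z_k)$ (using $p_{kh}-p_k\in X_k^0$), introduces a time-stepwise Ritz projection $R_h p_{kh}\in X_{kh}^0$, and exploits the Galerkin orthogonalities $B(p_k,z_k)=B(p_{kh},z_{kh})=B(R_hp_{kh},z_k-z_{kh})=0$ to reach $B(p_{kh}-R_hp_{kh},z_k-z_{kh})$. This is then bounded using the purely spatial discrete-harmonic-extension estimates $\Vert p_{kh}-R_hp_{kh}\Vert_I\le Ch^{1/2}\Vert\phi_{kh}\Vert_{L^2(I;L^2(\Gamma))}$ and $\Vert\nabla(p_{kh}-R_hp_{kh})\Vert_I\le Ch^{-1/2}\Vert\phi_{kh}\Vert_{L^2(I;L^2(\Gamma))}$ (from \cite{WM-Hinze}), paired with $\Vert\nabla(z_k-z_{kh})\Vert_I\le Ch\Vert g\Vert_I$ and $\bigl(\sum_m k_m^{-1}\Vert[z_k-z_{kh}]_m\Vert^2\bigr)^{1/2}\le C\Vert g\Vert_I$ (from \cite{MeidnerVexler}). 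No temporal regularity of $\phi_{kh}$ or $p_k(\phi_{kh})$ enters, so the $h^{1/2}$ factor is decoupled from $k$. You should replace your Theorem~\ref{error_space_esti}/Proposition~\ref{semi_regularity} route by this Ritz-projection duality step.
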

	 \begin{proof}
	 	We first prove the third estimate of \eqref{result_1}. For any given $\omega_{kh}\in X_{kh}(\Gamma)$, setting $\phi_k=\omega_{kh}$ in \eqref{norm_dev_time_semi} and $\phi_{kh}=\omega_{kh}$ in \eqref{full_discrete_norm_div}, then there holds
	 	\begin{equation}
	 		\begin{split}
	 			\int_{\Sigma_T}\partial_nz_k\omega_{kh}dsdt&=-\int_{\Omega_T}gp_k(\omega_{kh})dxdt\\
	 			&=-\int_{\Omega_T}gp_{kh}(\omega_{kh})dxdt+\int_{\Omega_T}g(p_{kh}(\omega_{kh})-p_k(\omega_{kh}))dxdt\\
	 			&=\int_{\Sigma_T}\partial^h_nz_{kh}\omega_{kh}dsdt+\int_{\Omega_T}g(p_{kh}(\omega_{kh})-p_k(\omega_{kh}))dxdt.
	 		\end{split}
	 	\end{equation}
	 	
	 	Define the Ritz projection $R_{h}:X_{kh}\to X^0_{kh}$ such that $R^n_{h}p_{kh,n}:=R_{h}p_{kh}|_{I_n}\in V^0_{h}$ satisfies
	 	\begin{equation}
	 		(\nabla R^n_{h}p_{kh,n},\nabla v_h)=(\nabla p_{kh,n},\nabla v_h)\qquad\forall v_h\in V^0_h,\quad \quad n=1,\cdots,M.
	 	\end{equation}
Using the fact  $p_{kh}(\omega_{kh})-p_k(\omega_{kh})\in X^0_k$ and $B(p_k(\omega_{kh}),z_k)=B(p_{kh}(\omega_{kh}),z_{kh})=B(R_{h}p_{kh}(\omega_{kh}),z_k-z_{kh})=0$, we have
	 	\begin{eqnarray}\notag
	 		&&\Big|\int_{\Sigma_T}(\partial_nz_k-\partial^h_nz_{kh})\omega_{kh}dsdt\Big|
	 		=\Big|\int_{\Omega_T}g(p_{kh}(\omega_{kh})-p_k(\omega_{kh}))dxdt\Big|\\\notag
	 		&=&\left\vert B(p_{kh}(\omega_{kh})-p_k(\omega_{kh}),z_k)\right\vert\\\notag
	 		&=&\vert B(p_{kh}(\omega_{kh})-R_{h}p_{kh}(\omega_{kh}),z_k-z_{kh})\vert\\\notag
	 		&=&\Big\vert (\nabla (p_{kh}(\omega_{kh})-R_{h}p_{kh}(\omega_{kh})),\nabla (z_k-z_{kh}))_I\\\notag
			&&-\sum\limits_{m=1}^M\Big((p_{kh}(\omega_{kh})-R_{h}p_{kh}(\omega_{kh}))^-_m,[z_k-z_{kh}]_m\Big)\Big|\\\notag
	 		&\le& \left\Vert \nabla (p_{kh}(\omega_{kh})-R_{h}p_{kh}(\omega_{kh}))\right\Vert_I \Vert\nabla (z_k-z_{kh})\Vert_I\\\notag
			&&+\Big(\sum\limits_{m=1}^Mk_m\Vert p_{kh,m}(\omega_{kh})-R^m_{h}p_{kh,m}(\omega_{kh})\Vert^2_{L^2(\Omega)}\Big)^{\frac{1}{2}}\Big(\sum\limits_{m=1}^M\frac{\Vert[z_k-z_{kh}]_m\Vert^2_{L^2(\Omega)}}{k_m}\Big)^{\frac{1}{2}}\\\notag
	 		&=& \Vert\nabla (p_{kh}(\omega_{kh})-R_{h}p_{kh}(\omega_{kh}))\Vert_I \Vert\nabla (z_k-z_{kh})\Vert_I\\\notag
			&&+\Vert p_{kh}(\omega_{kh})-R_{h}p_{kh}(\omega_{kh})\Vert_I\Big(\sum\limits_{m=1}^M\frac{\Vert[z_k-z_{kh}]_m\Vert^2_{L^2(\Omega)}}{k_m}\Big)^{\frac{1}{2}}\\
	 		&\le& C h^{\frac{1}{2}}\Vert g\Vert_I\Vert\omega_{kh}\Vert_{L^2(I;L^2(\Gamma))},\label{estimate_norm_dev}
	 	\end{eqnarray}
	 	where we have used the following estimates (cf. \cite{WM-Hinze,MeidnerVexler}):
	 	\begin{align*}
	 		\Big(\sum\limits_{m=1}^M\frac{\left\Vert[z_k-z_{kh}]_m\right\Vert^2_{L^2(\Omega)}}{k_m}\Big)^{\frac{1}{2}}&\le C\Vert g\Vert_I,\\
	 		\Vert \nabla (z_k-z_{kh})\Vert_I&\le Ch\Vert g\Vert_I,\\
	 		\Vert p_{kh}(\omega_{kh})-R_{h}p_{kh}(\omega_{kh})\Vert_I&\le Ch^{\frac{1}{2}}\Vert\omega_{kh}\Vert_{L^2(I;L^2(\Gamma))},\\
	 		\Vert \nabla (p_{kh}(\omega_{kh})-R_{h}p_{kh}(\omega_{kh}))\Vert_I&\le Ch^{-\frac{1}{2}}\Vert\omega_{kh}\Vert_{L^2(I;L^2(\Gamma))}.
	 	\end{align*}
	 	
	 	Particularly, setting $\omega_{kh}=\tilde{P}_{kh}\partial_nz_k-\partial_n^hz_{kh}$ in \eqref{estimate_norm_dev}, we obtain
	 	\begin{align*}
	 		\left\Vert\tilde{P}_{kh}\partial_nz_k-\partial_n^hz_{kh}\right\Vert_{L^2(I;L^2(\Gamma))}^2&=\Big|\int_{\Sigma_T}(\partial_nz_k-\partial^h_nz_{kh})\omega_{kh}dsdt\Big|\\
	 		&\le Ch^{\frac{1}{2}}\Vert\tilde{P}_{kh}\partial_nz_k-\partial_n^hz_{kh}\Vert_{L^2(I;L^2(\Gamma))}\Vert g\Vert_I.
	 	\end{align*}
	 	Using the triangle inequality yields
	 	\begin{align*}
	 		\Vert\partial_nz_k-\partial_n^hz_{kh}\Vert_{L^2(I;L^2(\Gamma))}&\le \Vert\partial_nz_k-\tilde{P}_h\partial_nz_k \Vert_{L^2(I;L^2(\Gamma))}+\Vert\tilde{P}_h\partial_nz_k-\partial_n^hz_{kh}\Vert_{L^2(I;L^2(\Gamma))}\\
	 		&\le Ch^{\frac{1}{2}}\Vert\partial_nz_k\Vert_{L^2(I;H^{\frac{1}{2}}(\Gamma))}+C h^{\frac{1}{2}}\Vert g\Vert_I\\
	 		&\le C h^{\frac{1}{2}}\Vert g\Vert_I.
	 	\end{align*}
	 This verifies the third estimate in \eqref{result_1}.
	
	 Next, we prove the first estimate in \eqref{result_1}. It follows from the triangle inequality that
	 	\begin{align*}
	 		\Vert\partial_n^hz_{kh}\Vert_{L^2(I;L^2(\Gamma))}\le \Vert\partial_nz_k\Vert_{L^2(I;L^2(\Gamma))}+\Vert\partial_nz_k-\partial_n^hz_{kh}\Vert_{L^2(I;L^2(\Gamma))}
	 		\le C(\Vert g\Vert_I+ h^{\frac{1}{2}}\Vert g\Vert_I)
	 		\le C \Vert g\Vert_I.
	 	\end{align*}	
	 	
	 	Finally, we verify the second estimate in \eqref{result_1}. Taking $g=y_{kh}$ on the right-hand side of \eqref{T_back_eq} and denoting the fully discrete solution of \eqref{T_back_eq} by $z_{kh}$, then the solution $p_{kh}(\phi_{kh})$ of \eqref{Test_EQ}  satisfies $p_{kh}(\phi_{kh})=y_{kh}$ by taking $\phi_{kh}=u_{kh}$ in \eqref{full_discrete_norm_div}. Therefore, there holds
	 	\begin{align*}
	 		\int_{\Omega_T}y_{kh}^2dxdt=-\int_{\Sigma_T}\partial_n^hz_{kh}u_{kh}dsdt
	 		\le \left\Vert \partial_n^hz_{kh}\right\Vert_{L^2(I;L^2(\Gamma))}\Vert u_{kh}\Vert_{L^2(I;L^2(\Gamma))}
	 		\le C\Vert y_{kh}\Vert_I\Vert u_{kh}\Vert_{L^2(I;L^2(\Gamma))},
	 	\end{align*}
	 	where we have used the first and the third estimates in \eqref{result_1}.  This completes the proof.
	 \end{proof}
	
	 \begin{Theorem}\label{FD_error_estimate}
	 	Let $(\bar{u}_k,\bar{y}_k)\in U_{ad}\times X_k$ and $(\bar{u}_{kh},\bar{y}_{kh})\in U_{ad}\times X_{kh}$ be the optimal pair of the temporal semi-discrete control problem \eqref{time_semi_control} and  the fully discrete optimal control problem \eqref{space_semi_control}, respectively.  Then 
	 	\begin{equation}
	 		\Vert\bar{u}_k-\bar{u}_{kh}\Vert_{L^2(I;L^2(\Gamma))}+\Vert \bar{y}_k-\bar{y}_{kh}\Vert_{L^2(I;L^2(\Omega))}\le Ch^{\frac{1}{2}}.\nonumber
	 	\end{equation}
	 \end{Theorem}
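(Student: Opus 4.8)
The plan is to follow the same variational argument used for the temporal semi-discrete estimate in Proposition~\ref{Error_Tem_Opt_pair}, but now comparing the semi-discrete optimal control problem \eqref{time_semi_control} with the fully discrete one \eqref{space_semi_control}. The key structural fact is that both reduced functionals $\hat{J}_k$ and $\hat{J}_{kh}$ are quadratic with second derivative bounded below by $\alpha\|\cdot\|^2_{L^2(I;L^2(\Gamma))}$, so the coercivity estimate
\begin{equation}\label{coerc_fd}
\alpha\|\bar{u}_k-\bar{u}_{kh}\|^2_{L^2(I;L^2(\Gamma))}\le \hat{J}_{kh}''(\bar{u}_{kh})(\bar{u}_k-\bar{u}_{kh},\bar{u}_k-\bar{u}_{kh})
=\hat{J}_{kh}'(\bar{u}_k)(\bar{u}_k-\bar{u}_{kh})-\hat{J}_{kh}'(\bar{u}_{kh})(\bar{u}_k-\bar{u}_{kh})
\end{equation}
holds, and the second term on the right is $\le 0$ by the discrete optimality condition \eqref{optimal_condition3}. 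First I would introduce an auxiliary fully discrete adjoint state $\tilde{z}_{kh}\in X^0_{kh}$ solving $B(\varphi_{kh},\tilde{z}_{kh})=(y_{kh}(\bar{u}_k)-y_d,\varphi_{kh})$, and rewrite $\hat{J}_{kh}'(\bar{u}_k)(\bar{u}_k-\bar{u}_{kh})$ as $\int_{\Sigma_T}(\alpha\bar{u}_k-\partial_n^h\tilde{z}_{kh})(\bar{u}_k-\bar{u}_{kh})$. Comparing with $\hat{J}_k'(\bar{u}_k)(\bar{u}_k-\bar{u}_{kh})\ge 0$ (from \eqref{semi_discrete_VI_1} tested with $v=\bar{u}_{kh}$, using that $\bar{u}_k$ solves the semi-discrete problem), I subtract to get $\alpha\|\bar{u}_k-\bar{u}_{kh}\|_{L^2(I;L^2(\Gamma))}\le \|\partial_n\bar{z}_k-\partial_n^h\tilde{z}_{kh}\|_{L^2(I;L^2(\Gamma))}$, where $\bar{z}_k$ is the semi-discrete optimal adjoint from \eqref{time_discrete_adjoint}.

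The next step is to split $\|\partial_n\bar{z}_k-\partial_n^h\tilde{z}_{kh}\|_{L^2(I;L^2(\Gamma))}\le \|\partial_n\bar{z}_k-\partial_n^h z_{kh}(\bar{y}_k-y_d)\|_{L^2(I;L^2(\Gamma))}+\|\partial_n^h z_{kh}(\bar{y}_k-y_d)-\partial_n^h\tilde{z}_{kh}\|_{L^2(I;L^2(\Gamma))}$, where $z_{kh}(\bar{y}_k-y_d)$ is the fully discrete solution of the backward equation with right-hand side $\bar{y}_k-y_d$. The first piece is exactly the object estimated by the third bound in \eqref{result_1} of Proposition~\ref{full_stable} with $g:=\bar{y}_k-y_d$, giving $O(h^{1/2})$ (here I need $\bar{y}_k-y_d\in L^2(I;L^2(\Omega))$, which holds). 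The second piece, by linearity the discrete normal derivative of $z_{kh}(\bar{y}_k-y_d-(y_{kh}(\bar{u}_k)-y_d))=z_{kh}(\bar{y}_k-y_{kh}(\bar{u}_k))$, is controlled by the first bound in \eqref{result_1} applied to $g:=\bar{y}_k-y_{kh}(\bar{u}_k)$, so it is $\le C\|\bar{y}_k-y_{kh}(\bar{u}_k)\|_I$; and $\bar{y}_k=y_k(\bar{u}_k)$, so Theorem~\ref{error_space_esti} bounds this by $Ch(\|y_k(\bar{u}_k)\|_{L^2(I;H^1(\Omega))}+\|\bar{u}_k\|_{L^2(I;H^{1/2}(\Gamma))})$. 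To close this I need uniform-in-$k$ bounds on $\|\bar{u}_k\|_{L^2(I;H^{1/2}(\Gamma))}$ and $\|y_k(\bar{u}_k)\|_{L^2(I;H^1(\Omega))}$: the former should follow from $\bar{u}_k=P_{U_{ad}}(\frac{1}{\alpha}\partial_n\bar{z}_k)$ together with the $H^{1/2}$-stability of $P_{U_{ad}}$ and a uniform estimate on $\partial_n\bar{z}_k$; the latter then follows from Proposition~\ref{semi_regularity} (applied with $u=\bar{u}_k$, after verifying $\bar{u}_k\in H^{1/4}(I;L^2(\Gamma))$ uniformly) or directly from Proposition~\ref{full_stable}'s second bound.

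For the state error I would use the triangle inequality
\begin{equation}\label{fd_state}
\|\bar{y}_k-\bar{y}_{kh}\|_I\le\|\bar{y}_k-y_{kh}(\bar{u}_k)\|_I+\|y_{kh}(\bar{u}_k)-\bar{y}_{kh}\|_I
\le\|\bar{y}_k-y_{kh}(\bar{u}_k)\|_I+C\|\bar{u}_k-\bar{u}_{kh}\|_{L^2(I;L^2(\Gamma))},
\end{equation}
where the last inequality uses $y_{kh}(\bar{u}_k)-\bar{y}_{kh}=y_{kh}(\bar{u}_k)-y_{kh}(\bar{u}_{kh})$, the definition of $\tilde{y}_{kh}$, the second bound in \eqref{result_1} of Proposition~\ref{full_stable}, and the $L^2$-stability of $\tilde{P}_{kh}$; the first term on the right of \eqref{fd_state} is again $O(h)$ by Theorem~\ref{error_space_esti}. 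Combining everything gives the claimed $O(h^{1/2})$ bound, since $h\le Ch^{1/2}$.

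I expect the main obstacle to be the bookkeeping around the normal derivatives: one must be careful that the two competing definitions of $\partial_n^h$ (Definitions~\ref{full_discrete_noral_div} and \ref{defn_Norm}, shown equivalent) are used consistently, that the linearity decomposition of $\partial_n^h z_{kh}$ with respect to the right-hand side $g$ is legitimate, and — most delicate — that the constants in the applications of Propositions~\ref{semi_regularity} and \ref{full_stable} are genuinely independent of $k$, which requires a uniform bound on $\bar{u}_k$ in the $H^{1/4}(I;L^2(\Gamma))\cap L^2(I;H^{1/2}(\Gamma))$ norm. That uniform regularity of the semi-discrete optimal control is the technical heart of the argument; everything else is an assembly of the already-established stability and approximation estimates.
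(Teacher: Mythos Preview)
Your proposal is correct and follows essentially the same route as the paper's own proof: the same coercivity-plus-optimality argument yielding $\alpha\|\bar u_k-\bar u_{kh}\|\le\|\partial_n\bar z_k-\partial_n^h\tilde z_{kh}\|$, the same splitting through the fully discrete adjoint with right-hand side $\bar y_k-y_d$ (the paper's $\hat z_{kh}$), the same applications of Proposition~\ref{full_stable} and Theorem~\ref{error_space_esti}, and the same triangle inequality for the state. Your notation differs only in that your $\tilde z_{kh}$ is the paper's $z_{kh}$ and your $z_{kh}(\bar y_k-y_d)$ is the paper's $\hat z_{kh}$. The uniform-in-$k$ regularity of $\bar u_k$ that you rightly flag as the delicate point is handled tersely in the paper (which in fact writes $\|\bar u\|_{L^2(I;H^{1/2}(\Gamma))}$ where $\|\bar u_k\|_{L^2(I;H^{1/2}(\Gamma))}$ is needed, and simply cites Proposition~\ref{semi_regularity}); your sketch via $\bar u_k=P_{U_{ad}}(\alpha^{-1}\partial_n\bar z_k)$ together with the uniform $L^2(I;H^2(\Omega))$ stability of $\bar z_k$ is the right way to close it.
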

	 \begin{proof}
	 	Since $\hat{J}_{kh}(u)$ is a quadratic functional with respect to $u$, $\hat{J}^{''}_{kh}(u)$ is a constant operator and satisfies $\hat{J}^{''}_{kh}(u)\ge\alpha$. Taking $v=\bar{u}_k$ and $v=\bar{u}_{kh}$ in \eqref{optimal_condition_adjoint} and \eqref{semi_discrete_VI_1}, respectively, there holds
	 	$$-\hat{J}^{'}_{kh}(\bar{u}_{kh})(\bar{u}_k-\bar{u}_{kh})\le0\le-\hat{J}^{'}_k(\bar{u}_k)(\bar{u}_k-\bar{u}_{kh}).$$
	 	Hence
	 	\begin{equation}\label{EQQ}
	 	\begin{split}
	 		\alpha\Vert \bar{u}_k-\bar{u}_{kh}\Vert^2_{L^2(I;L^2(\Gamma))}&\le \hat{J}^{''}_{kh}(\bar{u}_{kh})(\bar{u}_k-\bar{u}_{kh},\bar{u}_k-\bar{u}_{kh})\\
	 		&= \hat{J}^{'}_{kh}(\bar{u}_k)(\bar{u}_k-\bar{u}_{kh})-\hat{J}^{'}_{kh}(\bar{u}_{kh})(\bar{u}_k-\bar{u}_{kh})\\
	 		&\le \hat{J}^{'}_{kh}(\bar{u}_k)(\bar{u}_k-\bar{u}_{kh})-\hat{J}^{'}_k(\bar{u}_k)(\bar{u}_k-\bar{u}_{kh})\\
	 		&\le \int_{\Sigma_T}(\partial_n\bar{z}_k-\partial^h_n z_{kh})(\bar{u}_k-\bar{u}_{kh})dsdt\\
	 		&\le \Vert \partial_n\bar{z}_k-\partial^h_nz_{kh}\Vert_{L^2(I;L^2(\Gamma))}\Vert \bar{u}_k-\bar{u}_{kh}\Vert_{L^2(I;L^2(\Gamma))},
	 	\end{split}
	 	\end{equation}
	 	where $z_{kh}\in X^0_{kh}$ satisfies
	 	\begin{equation}
	 		B(\varphi_{kh},z_{kh})=(\varphi_{kh},y_{kh}(\bar{u}_k)-y_d)_I\qquad\forall\varphi_{kh}\in X^0_{kh}.\nonumber
	 	\end{equation}
	 	Let $\hat{z}_{kh}\in X^0_{kh}$ be the solution of the equation
	 	\begin{equation}
	 		B(\varphi_{kh},\hat{z}_{kh})=(\varphi_{kh},\bar{y}_k-y_d)_I\qquad\forall\varphi_{kh}\in X^0_{kh}.\nonumber
	 	\end{equation}
	 	By using the estimate \eqref{EQQ} and the triangle inequality, there holds
	 	\begin{equation}
	 		\begin{split}
	 			\alpha\Vert \bar{u}_k-\bar{u}_{kh}\Vert_{L^2(I;L^2(\Gamma))}&\le \left\Vert\partial_n\bar{z}_k-\partial^h_nz_{kh}\right\Vert_{L^2(I;L^2(\Gamma))}\\
	 			&\le \left\Vert\partial_n\bar{z}_k-\partial^h_n\hat{z}_{kh}\right\Vert_{L^2(I;L^2(\Gamma))}+\left\Vert\partial^h_n\hat{z}_{kh}-\partial^h_nz_{kh}\right\Vert_{L^2(I;L^2(\Gamma))}\\
	 			&\le Ch^{\frac{1}{2}}\left\Vert\bar{y}_k-y_d\right\Vert_I+C\left\Vert \bar{y}_k-y_{kh}(\bar{u}_k)\right\Vert_I\\\nonumber
	 			&\le Ch^{\frac{1}{2}}\left\Vert\bar{y}_k-y_d\right\Vert_I+Ch\big(\Vert \bar{y}_k\Vert_{L^2(I;H^1(\Omega))}+\Vert \bar u\Vert_{L^2(I;H^{\frac{1}{2}}(\Gamma))}\big)\\
	 			&\le Ch^{\frac{1}{2}},
	 		\end{split}
	 	\end{equation}
	 	where we have used Lemma \ref{back_regularity}, Theorem \ref{error_space_esti}, Propositions \ref{full_stable} and \ref{semi_regularity}.
	 	
	 	Finally, the error between the semi-discrete  and the fully discrete states is given by
	 	\begin{align*}
	 		\Vert\bar{y}_k-\bar{y}_{kh}\Vert_I&\le \Vert\bar{y}_k-y_{kh}(\bar{u}_k)\Vert_I+\Vert y_{kh}(\bar{u}_k)-\bar{y}_{kh}\Vert_I\\
	 		&\le Ch+C\Vert\tilde{P}_{kh}(\bar{u}_k-\bar{u}_{kh})\Vert_I\\
	 		&\le Ch+C\Vert\bar{u}_k-\bar{u}_{kh}\Vert_I\\
	 		&\le Ch^{\frac{1}{2}},
	 	\end{align*}
	where we used Proposition \ref{full_stable}. This completes the proof.
	 \end{proof}
	
	 By combining Theorems \ref{time_semi_control_error} and \ref{FD_error_estimate}, we can obtain the main result on the error estimation between the optimal pairs of the continuous and the fully discrete control problems.
	 \begin{Theorem}
	 	Let $(\bar{u},\bar{y})\in U_{ad}\times L^2(I;H^1(\Omega))\cap H^{\frac{1}{2}}(I;L^2(\Omega))$ and $(\bar{u}_{kh},\bar{y}_{kh})\in U_{ad}\times X_{kh}$ be the solution of the continuous control problem \eqref{BC_Functioal} and   the fully discrete control problem \eqref{FD_error_estimate}, respectively. Then we have 
	 	\begin{equation}\label{main_result}
	 		\Vert\bar{u}-\bar{u}_{kh}\Vert_{L^2(I;L^2(\Gamma))}+\Vert \bar{y}-\bar{y}_{kh}\Vert_{L^2(I;L^2(\Omega))}\le C(h^{\frac{1}{2}}+k^{\frac{1}{4}}).
	 	\end{equation}
	 \end{Theorem}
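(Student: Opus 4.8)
The plan is to obtain the estimate by inserting the temporal semi-discrete optimal pair $(\bar{u}_k,\bar{y}_k)$ as an intermediate quantity and applying the triangle inequality, so that the continuous-to-fully-discrete error splits into a purely temporal part and a purely spatial part that have already been bounded. Concretely, I would write
\begin{align*}
\Vert\bar{u}-\bar{u}_{kh}\Vert_{L^2(I;L^2(\Gamma))}+\Vert\bar{y}-\bar{y}_{kh}\Vert_{L^2(I;L^2(\Omega))}
&\le \Vert\bar{u}-\bar{u}_k\Vert_{L^2(I;L^2(\Gamma))}+\Vert\bar{y}-\bar{y}_k\Vert_{L^2(I;L^2(\Omega))}\\
&\quad+\Vert\bar{u}_k-\bar{u}_{kh}\Vert_{L^2(I;L^2(\Gamma))}+\Vert\bar{y}_k-\bar{y}_{kh}\Vert_{L^2(I;L^2(\Omega))}.
\end{align*}

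Next I would bound the first two terms by Theorem \ref{time_semi_control_error}. Since $\Omega$ is a convex polytope here (the state has the regularity $\bar y\in L^2(I;H^1(\Omega))\cap H^{\frac12}(I;L^2(\Omega))$ of Theorem \ref{regularity_cont_cnvex}), the admissible exponents are $s=\frac12$ and $s^\prime=\frac14$, so Theorem \ref{time_semi_control_error} yields a bound $C(k^{1/2}+k^{1/4})\le Ck^{1/4}$, using $k\le T$ to absorb $k^{1/2}$ into $k^{1/4}$. Then I would bound the last two terms directly by Theorem \ref{FD_error_estimate}, which gives $Ch^{1/2}$. Adding the two contributions produces $C(h^{1/2}+k^{1/4})$, which is the claimed estimate.

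There is essentially no obstacle left at this stage: the theorem is a bookkeeping corollary of the two preceding main theorems, and the only minor point to state carefully is that Theorem \ref{time_semi_control_error} is applied in its polytopal regime (so that no smoothness or compatibility hypotheses on $y_d$ and $y_0$ beyond $f,y_d\in L^2(I;L^2(\Omega))$, $y_0\in L^2(\Omega)$ are needed), and that the regularity assumptions required by Theorem \ref{FD_error_estimate}, namely $\bar y_k\in L^2(I;H^1(\Omega))$ and $\bar u\in L^2(I;H^{1/2}(\Gamma))$, are guaranteed by Proposition \ref{semi_regularity} together with Theorem \ref{regularity_cont_cnvex}. All the hard analytic work — the Aubin--Nitsche arguments, the jump estimates of Lemmas \ref{jumping_estimate}--\ref{jump_main_esti}, the modified boundary projection $\hat P_h$, and the discrete normal derivative stability of Proposition \ref{full_stable} — has already been carried out, so the proof consists only of assembling these pieces.
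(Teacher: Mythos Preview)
Your proposal is correct and matches the paper's approach exactly: the paper simply states that the result follows ``by combining Theorems \ref{time_semi_control_error} and \ref{FD_error_estimate}'' and gives no further proof, which is precisely the triangle-inequality splitting through $(\bar u_k,\bar y_k)$ that you wrote out. Your remarks on using the polytopal regime $s=\tfrac12$, $s'=\tfrac14$ and on the underlying regularity from Theorem \ref{regularity_cont_cnvex} and Proposition \ref{semi_regularity} are the right clarifications.
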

	
   \begin{Remark}  There is another error estimate for $(\bar{u}_{kh},\bar{y}_{kh})$ in \cite{WM-Hinze} in the  two dimensional case, where the condition $k=O(h^2)$ is imposed to ensure a convergence order similar to \eqref{main_result}. More precisely, the estimate for $(\bar{u}_{kh},\bar{y}_{kh})$ in \cite{WM-Hinze} reads
	 \begin{equation}\label{other}
	 	\Vert\bar{u}-\bar{u}_{kh}\Vert_{L^2(I;L^2(\Gamma))}+\Vert \bar{y}-\bar{y}_{kh}\Vert_{L^2(I;L^2(\Omega))}\le C(h^{\frac{1}{2}}+k^{\frac{1}{4}}+h^{\frac{3}{2}}k^{-\frac{1}{2}}+h^{-\frac{1}{2}}k^{\frac{1}{2}}+h^{\frac{5}{2}}k^{-1}).
	 \end{equation}
Here the time step $k$ and mesh size $h$ are coupled. Therefore, the condition $k=O(h^2)$ is necessary to  obtain the optimal convergence order. However, the condition $k=O(h^2)$ is unnecessary both in theory and numerical computations. In this paper we remove this mesh size condition by developing new a priori error analysis, and thus improve the error estimate in \cite{WM-Hinze}.
\end{Remark}
\begin{Remark}
  For the fully discretization of parabolic Dirichlet boundary control problems in smooth domains, the traditional polygonal approximation will introduce geometric errors which make the error estimate more complicate. We refer to \cite{DeckelnickGuntherHinze2009} for the elliptic case and postpone the error analysis to a future work.
 The extensive numerical results for parabolic Dirichlet boundary control problems can be found in \cite{WM-Hinze} and are omitted here.
\end{Remark}



\medskip

\begin{thebibliography}{99}
		
\bibitem{Thomas Apel}
T. Apel, M. Mateos, J. Pfefferer and A. R\"osch, Error estimates for Dirichlet control problems in polygonal domains: quasi-uniform meshes, Math. Control Related Fields., 8(2018), no. 1, 217-245.
			
\bibitem{NAradaandJPRaymond}
N. Arada and J.P. Raymond, Dirichlet boundary control of semilinear parabolic equations. part 1: problems with no state constraints, Appl. Math. Optim., 45(2002), no. 2, 125-143.
		
\bibitem{CBernardiandHEFekih}
F.B. Belgacem, C. Bernardi and H.E. Fekih, Dirichlet boundary control for a parabolic equation with a final
observation I: A space-time mixed formulation and penalization, Asymptot. Anal., 71(2011), no. 1-2, 101-121.
		
\bibitem{M. Berggren-2004860}
M. Berggren, Approximation of very weak solutions to boundary value problems, SIAM J. Numer. Anal., 42(2004), no. 2, 860-877.
	
\bibitem{BrennerScott2008}
S.C. Brenner and L.R. Scott, The Mathematical Theory of Finite Element Methods, 3rd ed., Springer, New York, 2008.
				
\bibitem{ECasasandJPRaymond}
E. Casas and J.P. Raymond, Error estimates for the numerical approximation of Dirichlet boundary control for semilinear elliptic equations, SIAM J. Control Optim., 45(2006), no. 5, 1586-1611.

\bibitem{ECasasandJSokolowski}
E. Casas and J. Sokolowski, Approximation of boundary control problems on curved domains, SIAM J. Control Optim., 48(2010), no. 6, 3746-3780.
					
			
\bibitem{Ciarlet}
P.G. Ciarlet, The Finite Element Methods for Elliptic Problems, 1st ed., Vol. 4, North-Holland, 1978.
		
\bibitem{Clement}
P. Cl\'ement, Approximation by finite element functions using local regularization, RAIRO Anal. Numer., 9(1975), no. 2, 77-84.
		
		
\bibitem{DeckelnickGuntherHinze2009}
K. Deckelnick, A. G\"{u}nther and M. Hinze, Finite element approximation of Dirichlet boundary control for elliptic PDEs on two and three-dimensional curved domains, SIAM J. Control Optim., 48(2009), no. 4, 2798-2819.

\bibitem{L. C. EVANS}
L.C. Evans, Partial Differential Equations, 2nd ed., Vol. 19, AMS, 2002.
			
\bibitem{DAFrenchandJTKing1993}
D.A. French and J.T. King, Analysis of a robust finite element approximation for a parabolic equation with rough boundary data, Math. Comput., 60(1993), no. 201, 79-104.
		
\bibitem{DAFrenchandJTKing}
D.A. French and J.T. King, Approximation of an elliptic control problem by the finite element method, Numer. Funct. Anal. Optim., 12(1991), no. 3-4, 299-314.
	
\bibitem{A.V. Fursikov}
A.V. Fursikov, M.D. Gunzburger and L.S. Hou, Boundary value problems and optimal boundary control for the Navier-Stokes systems: the two-dimensional case, SIAM J. Control Optim., 36(1998), no. 3, 852-894.
	


\bibitem{V. Girault}
V. Girault and  P.A. Raviart, Finite Element Methods for Navier-Stokes Equations: Theory and Algorithms, Springer, 1986.				
\bibitem{WM-Hinze}
W. Gong, M. Hinze and Z.J. Zhou, Finite element method and a priori error estimates for dirichlet boundary control problems governed by parabolic PDEs, J. Sci. Comput., 66(2016), no. 3, 941-967.
				
\bibitem{GongHinzeZhou}
W. Gong, M. Hinze and Z.J. Zhou, A priori error analysis for finite element approximation of parabolic optimal control problems with pointwise control, SIAM J. Control Optim., 52(2014),  no. 1, 97-119.
	
\bibitem{WGongandBYLi}
W. Gong and B.Y. Li, Improved error estimates for semidiscrete finite element solutions of parabolic Dirichlet boundary control problems, IMA J. Numer. Anal.,  40(2020), no. 4, 2898-2939.
		
\bibitem{MDGunzburger1991}
M.D. Gunzburger, L.S. Hou and T.P. Svobodny, Analysis and finite element approximation of optimal control problems for the stationary Navier-Stokes equations with Dirichlet controls, RAIRO Mod\'el. Math. Anal. Num\'er., 25(1991), no. 6, 711-748.


\bibitem{Michael}
M. Hinze, A variational discretization concept in control constrained optimization: the linear-quadratic case, Comput. Optim. Appl.,  30(2005), no. 1, 45-61.

\bibitem{MHinzeandKKunisch}
M. Hinze and K. Kunisch, Second order methods for boundary control of the instationary Navier-Stokes system, ZAMM Z. Angew. Math. Mech., 84(2004), no. 3, 171-187.
				
\bibitem{HinzePinnauUlbrich}
M. Hinze, R. Pinnau, M. Ulbrich and S. Ulbrich, Optimization with PDE Constraints, Vol. 23, Springer-Netherlands, 2009.
		
		
\bibitem{KKunischandBVexler2007}
K. Kunisch and B. Vexler, Constrained Dirichlet boundary control in $L^2$ for a class of evolution equations, SIAM J. Control Optim., 46(2007), no. 5, 1726-1753.
		
\bibitem{ILasiecka1986}
I. Lasiecka, Galerkin approximations of abstract parabolic boundary value problems with rough boundary data--$L^p$ theory, Math. Comput., 47(1986), no. 175, 55-75.
		
\bibitem{ILasiecka1980}
I. Lasiecka, Unified theory for abstract parabolic boundary problems--a semigroup approach, Appl. Math. Optim., 6(1980), no. 4, 287-333.
		
\bibitem{Lasiecka}
 I. Lasiecka and K. Malanowski, On discrete-time Ritz-Galerkin approximation of control constrained optimal control problems for parabolic systems, Control Cybern., 7(1978), no. 1, 21-36.	
		
	
\bibitem{B.J.Li}
B.J. Li, H. Luo and X.P. Xie, Error estimation of a discontinuous Galerkinmethod for time fractional subdiffusion problems with nonsmooth data, Fract. Calc. Appl. Anal., 25(2022), 747–782.
		
			
\bibitem{J. L. LIONS}
J.L. Lions, Optimal Control of Systems Governed by Partial Differential Equations, 1st ed., Vol. 170, Springer, Berlin, 1971.
		
\bibitem{J.L1}
J.L. Lions and E. Magenes, Nonhomogeneous Boundary Value Problem and Applications, Vol. 2, Springer, New York, 1972.

		
		
\bibitem{SMayRRannacheran}
S. May, R. Rannacher and B. Vexler, Error analysis fo a finite element approximation of elliptic Dirichlet boundary control problems, SIAM J. Control Optim., 51(2013), no. 3, 2585-2611.
		
\bibitem{MeidnerVexler}
D. Meidner and B. Vexler, A priori error estimates for space-time finite element discretization of parabolic optimal control problems. I: Problems without control constraints, SIAM J. Control Optim.,  47(2008), no. 3, 1150-1177.
	
\bibitem{MeidnerVexler2}		
D. Meidner and B. Vexler, A priori error estimates for space-time finite element discretization of parabolic optimal control problems. II. Problems with control constraints, SIAM J. Control Optim., 47(2008), no. 3, 1301-1329. 		
\bibitem{Of}
G. Of, T.X. Phan and O. Steinbach, An energy space finite element approach for elliptic Dirichlet boundary control problems, Numer. Math., 129(2015), pp. 723-748,
		
	
\bibitem{Schtzau}
D. Sch\"{o}tzau, $Hp$-DGFEM for Parabolic Evolution Problems, Ph.D thesis, Swiss Federal Institute of Technology, Z\"urich, 1999.

\bibitem{J. Sokoowski}
J. Sokolowski and J.P. Zolesio, Introduction to Shape Optimization: Shape Sensitivity Analysis, 1st ed., Springer-Verlag, Berlin, 1992.
			
\bibitem{V.Thom}
V. Thom\'ee, Galerkin Finite Element Methods for Parabolic Problems, 2nd ed, Springer Ser. Comput. Math., 25, Springer-Verlag, Berlin, 1997.
		
\bibitem{F.Trltzsch-2004860}
F. Tr\"{o}ltzsch, Optimal Control of Partial Differential Equations: Theory, Methods and Applications, Vol. 12, AMS, Providence, Rhode Island, 2010.
		
	
\end{thebibliography}
\end{document}